\newtheorem{lem}{Lemma}
\newtheorem{thm}{Theorem}
\newtheorem{prop}{Proposition}
\newtheorem{cor}{Corollary}
\newtheorem{oss}{Remark}
\newcommand{\uvec}{\boldsymbol{u}}
\newcommand{\vvec}{\boldsymbol{v}}
\newcommand{\xivec}{\boldsymbol\xi}
\newcommand{\pvec}{\boldsymbol{p}}
\newcommand{\hvec}{\boldsymbol{h}}
\newcommand{\nvec}{\boldsymbol{n}}
\newcommand{\wvec}{\boldsymbol{w}}
\newcommand{\ptil}{\widetilde{\boldsymbol{p}}}
\newcommand{\qtil}{\widetilde{q}}
\newcommand{\zvec}{\boldsymbol{z}}
\numberwithin{equation}{section}
\begin{document}

\title{\bf Optimal  distributed control of a nonlocal Cahn--Hilliard/Navier--Stokes system in 2D}

\author{
Sergio Frigeri\footnote{Weierstrass Institute for Applied
Analysis and Stochastics, Mohrenstrasse 39, D-10117 Berlin,
Germany, E-mail {\tt  frigeri@wias-berlin.de}},
Elisabetta Rocca\footnote{Weierstrass Institute for Applied
Analysis and Stochastics, Mohrenstrasse~39, D-10117 Berlin,
Germany, E-mail {\tt  rocca@wias-berlin.de}, and Dipartimento di Matematica, Universit\`a di Milano,
Via Saldini 50, 20133 Milano, Italy, E-mail {\tt elisabetta.rocca@unimi.it}},
and J\"urgen Sprekels\footnote{Weierstrass Institute for Applied
Analysis and Stochastics, Mohrenstrasse~39, D-10117 Berlin,
Germany, E-mail {\tt  sprekels@wias-berlin.de}, and Institut f\"ur Mathematik der Humboldt--Universit\"at
zu Berlin, Unter den Linden 6, D-10099 Berlin, Germany\newline
{\bf Acknowledgement.}   The work of S.F. and of E.R. was supported by the
 FP7-IDEAS-ERC-StG \#256872 (EntroPhase) and by GNAMPA (Gruppo Nazionale per l'Analisi Matematica, la Probabilit\`a e le loro Applicazioni) of INdAM (Istituto Nazionale di Alta Matematica).
}
}

\maketitle

\vspace{-.4cm}

\noindent {\bf Abstract.}
We study a diffuse interface model for incompressible isothermal mixtures of two immiscible fluids coupling
the Navier--Stokes system with a convective nonlocal Cahn--Hilliard equation in two dimensions of space.
We apply recently proved well-posedness and regularity results in order to establish existence of optimal controls as well as first-order necessary optimality conditions for
an associated optimal control problem in which a distributed control is applied to the fluid flow.
\vspace{.4cm}

\noindent
{\bf Key words:}  Distributed optimal control, first-order necessary optimality conditions, nonlocal models, integrodifferential equations, Navier--Stokes system, Cahn--Hilliard equation, phase separation.
\vspace{4mm}

\noindent
{\bf AMS (MOS) subject clas\-si\-fi\-ca\-tion:}
49J20, 49J50, 35R09, 45K05, 74N99.

\section{Introduction}

In this paper, we consider the nonlocal Cahn--Hilliard/Navier--Stokes system
\begin{align}
&\varphi_t+\uvec\cdot\nabla\varphi=\Delta\mu,\label{sy1}\\
&\mu=a\,\varphi-K\ast\varphi+F'(\varphi),\label{sy2}\\
&\uvec_t\,-\,2\,\mbox{div}\,\big(\nu(\varphi)\,D\uvec\big)+(\uvec\cdot\nabla)\uvec+\nabla\pi=\mu\,\nabla\varphi+\vvec,\label{sy3}\\
&\mbox{div}(\uvec)=0,\label{sy4}
\end{align}
in $Q:=\Omega\times(0,T)$, where $\Omega\subset\mathbb{R}^2$ is a bounded smooth domain
with boundary $\,\partial \Omega\,$ and outward unit normal field
$\,\nvec$,   and where $T>0$ is a prescribed final time. Moreover, $D$ denotes the
symmetric gradient, which is defined by $D\uvec:=\big(\nabla \uvec+\nabla^T\uvec
\big)/2$.

This system
models
the flow and phase separation of an isothermal
mixture of two incompressible immiscible fluids with matched densities
(normalized to unity), where nonlocal interactions between the molecules
are taken into account. In this connection,
$\uvec$ is the (averaged) velocity
field, $\varphi$ is the order parameter (relative concentration of one of the species),
$\pi$ is the pressure and
$\vvec$ is the external
volume
force density. The mobility in \eqref{sy1} is assumed to be constant equal to $1$ for simplicity,
while in \eqref{sy3} we allow the viscosity $\nu$ to be $\varphi-$dependent.
The chemical potential $\mu$ contains the spatial convolution ${K}\ast\varphi$ over $\Omega$, defined by
$$({K}\ast\varphi)(x):=\int_\Omega K(x-y)\varphi(y)\, dy,
\quad x\in \Omega,
$$
of the order parameter $\varphi$ with a sufficiently smooth interaction kernel
$K$ that satisfies ${K}(z)=K(-z)$.
Moreover, $\,a\,$ is given by
$$a(x):=\int_\Omega K(x-y)\,dy,$$
for
 $x\in\Omega$, and $F$ is a double-well potential,
which, in general, may be
regular or singular (e.g., of logarithmic or double obstacle type); in this paper,
we have to confine ourselves to the regular case.

The system \eqref{sy1}--\eqref{sy4} is complemented by the boundary and initial conditions
\begin{align}
&\frac{\partial\mu}{\partial\nvec}=0,\qquad\uvec=0,\qquad\mbox{on }\:\Sigma:=\partial\Omega\times(0,T),\label{bcs}\\
&\uvec(0)=\uvec_0,\qquad\varphi(0)=\varphi_0,\qquad\mbox{in }\:\Omega,\label{ics}
\end{align}
where, as usual, $\partial\mu/\partial\nvec\,$ denotes the directional derivative
of $\,\mu\,$ in the direction of $\,\nvec$.

Problem  \eqref{sy1}--\eqref{ics} is the nonlocal version of the so-called ``Model H''
which is known from the literature
(cf., e.\,g., \cite{AMW,GPV,HMR,HH,JV,Kim2012,LMM}). The main difference between local and nonlocal models is given by the choice
of the  interaction potential. Typically, the nonlocal contribution to the free energy has the form $\,\int_\Omega \widetilde{K}(x,y)\,|\varphi(x) - \varphi(y)|^2\, dy\,$, with a given
symmetric kernel $\widetilde{K}$ defined on  $\Omega\times \Omega$; its  local Ginzburg--Landau counterpart
is given by $\,(\sigma/2)|\nabla\varphi(x)|^2$,
where the positive parameter $\,\sigma\,$ is a measure for the thickness
of the interface.

Although the physical relevance of nonlocal interactions
was already pointed out in the pioneering paper \cite{Ro} (see also \cite[4.2]{Em}
and the references therein) and studied (in case of constant velocity) in, e.g., \cite{BH1, CKRS, GZ, GL1, GL2, GLM, KRS, KRS2},
and, while the classical (local) Model H has been investigated by several authors (see, e.g., \cite{A1,A2,B,CG,GG1,GG2,GG3,HHK,LS,S,ZWH,ZF} and also \cite{ADT,Bos,GP,KCR} for models with shear dependent viscosity), its nonlocal version has been tackled (from the analytical viewpoint concerning well-posedness and related questions) only more recently
(cf., e.g., \cite{CFG,FGG,FG1,FG2,FGK,FGR}).

In particular, the following cases have been studied:
regular potential $F$ associated with constant mobility in \cite{CFG,FGG,FG1,FGK}; singular potential associated
with constant mobility in \cite{FG2}; singular potential and degenerate mobility in \cite{FGR}; the case of nonconstant viscosity in \cite{FGG}.
In the two-dimensional case it was shown in \cite{FGK} that
for regular potentials and constant mobilities the problem \eqref{sy1}--\eqref{ics} enjoys
a unique strong solution. Recently, uniqueness was proved also for weak solutions
(see \cite{FGG}).

With the well-posedness results of \cite{FGK} and in \cite{FGG} at hand, the road is paved
for studying optimal control problems associated with \eqref{sy1}--\eqref{ics} at least
in the two-dimensional case. This is the purpose of this paper. To our best knowledge,
this has never been done before in the literature; in fact, while there exist
recent contributions to associated optimal control problems for the time-discretized local
version of the system (cf. \cite{HW2,HW3}) and to numerical aspects of the
control problem (see \cite{HK}), it seems that a rigorous analysis for the
full problem without time discretization has never been performed before.
Even for the much simpler case of the convective Cahn--Hilliard equation, that is,
if the velocity is prescribed so that the Navier--Stokes equation (\ref{sy3}) is not
present, only very few contributions exist that deal with optimal control problems;
in this connection, we refer to \cite{ZL1,ZL2} for local models in one and two space
dimensions  and to the recent paper \cite{RS}, in which first-order necessary optimality conditions were derived for the nonlocal convective Cahn--Hilliard system in 3D
in the case of degenerate mobilities and singular potentials.

More precisely, the control problem under investigation in this paper reads as follows:

\vspace{5mm}
\textbf{(CP)} Minimize the
tracking type
cost functional
\begin{align}
\mathcal{J}(y,\vvec)&:=\frac{\beta_1}{2}\Vert\uvec-\uvec_Q\Vert_{L^2(Q)^2}^2+\frac{\beta_2}{2}\Vert\varphi-\varphi_Q\Vert_{L^2(Q)}^2
+\frac{\beta_3}{2}\Vert\uvec(T)-\uvec_\Omega\Vert_{L^2(\Omega)^2}^2\nonumber\\
&+\frac{\beta_4}{2}\Vert\varphi(T)-\varphi_\Omega\Vert_{L^2(\Omega)}^2+\frac{\gamma}{2}\Vert\vvec\Vert_{L^2(Q)^2}^2,\label{costfunct}
\end{align}
where $y:=[\uvec,\varphi]$ solves problem \eqref{sy1}-\eqref{ics}.
We assume throughout the paper without further reference that
in the cost functional (\ref{costfunct}) the quantities
$\uvec_Q\in
L^2(0,T;G_{div})$, $\varphi_Q\in L^2(Q)$, $\uvec_\Omega\in G_{div}$, and
$\varphi_\Omega\in L^2(\Omega)$,
are given target functions, while $\beta_i$, $i=1\dots 4$, and $\gamma$ are some fixed nonnegative constants that do not vanish simultaneously. Moreover, the external body force density $\vvec$, which plays the role of the control, is postulated to belong to a suitable closed, bounded and convex subset
(which will be specified later) of the space of controls
\begin{align}
&\mathcal{V}:=L^2(0,T;G_{div}),\nonumber
\end{align}
where
\begin{align}
&G_{div}:=\overline{\big\{\uvec\in C^\infty_0(\Omega)^2:\mbox{div}(\uvec)=0\big\}}^{L^2(\Omega)^2}.\nonumber
\end{align}
We recall that the spaces $G_{div}$ and
\begin{align}
&V_{div}:=\big\{\uvec\in H^1_0(\Omega)^2:\mbox{div}(\uvec)=0\big\}\nonumber
\end{align}
are the classical Hilbert spaces for the incompressible Navier--Stokes equations with no-slip boundary conditions
(see, e.g., \cite{T}).

We remark that controls in the form of volume force densities can occur
in many technical applications. For instance, they may be induced in the fluid flow from
stirring devices, from the application of acoustic fields (ultrasound, say) or, in the
case of electrically conducting fluids, from the application of magnetic fields.

\vspace{2mm}
The plan of the paper is as follows: in the next Section 2, we collect some preliminary results concerning the well-posedness of system \eqref{sy1}--\eqref{ics}, and we prove some stability estimates which are necessary for the analysis of the control problem.
In Section 3, we prove the main results of this paper, namely, the existence of a solution to the optimal control problem {\bf (CP)}, the Fr\'echet differentiability of the control-to-state operator, as well as the first-order necessary optimality conditions for {\bf (CP)}.

\section{Preliminary results}

In this section, we first summarize some results from \cite{CFG,FGG,FGK} concerning
the well-posedness of solutions to the system \eqref{sy1}--\eqref{ics}.
We also establish a stability estimate that later will turn out to be crucial for
showing the differentiability of the associated control-to-state mapping.

\vspace{2mm}
Before going into this, we introduce some notation.

Throughout the paper, we set $H:=L^2(\Omega)$, $V:=H^1(\Omega)$, and we denote by $\Vert\,\cdot\,\Vert$
and $(\cdot\,,\,\cdot)$ the standard norm and the scalar product, respectively, in $H$ and
$G_{div}$, as well as in $L^2(\Omega)^2$ and $L^2(\Omega)^{2\times 2}$.
The notations $\langle\cdot\,,\,\cdot\rangle_{X}$ and $\Vert\,\cdot\,\Vert_X$ will stand for the duality pairing between a Banach space
$X$ and its dual $X'$, and for the norm of $X$, respectively.
Moreover, the space $V_{div}$ is endowed with the scalar product
\begin{align}
&(\uvec_1,\uvec_2)_{V_{div}}:=(\nabla\uvec_1,\nabla\uvec_2)=2\big(D\uvec_1,D\uvec_2\big)
\qquad\forall\,\uvec_1,\uvec_2\in V_{div}.\nonumber
\end{align}
We also introduce the Stokes operator $\,A\,$ with no-slip boundary condition
(see, e.g., \cite{T}). Recall that
$\,A:D(A)\subset G_{div}\to G_{div}\,$ is defined as $\,A:=-P\Delta$, with domain
$\,D(A)=H^2(\Omega)^2\cap V_{div}$,
where $\,P:L^2(\Omega)^2\to G_{div}\,$ is the Leray projector. Moreover, $\,A^{-1}:G_{div}\to G_{div}$
is a selfadjoint compact operator in $G_{div}$. Therefore, according to classical results,
$A$ possesses a sequence of eigenvalues $\{\lambda_j\}_{j\in\mathbb{N}}$ with $0<\lambda_1\leq\lambda_2\leq\cdots$ and $\lambda_j\to\infty$,
and a family $\{\wvec_j\}_{j\in \mathbb{N}}\subset D(A)$ of associated eigenfunctions which is orthonormal in $G_{div}$.
We also recall Poincar\'{e}'s inequality
\begin{align}
&\lambda_1\,\Vert\uvec\Vert^2\leq\Vert\nabla\uvec\Vert^2\qquad\forall\,\uvec\in V_{div}\,.\nonumber
\end{align}
The trilinear form $\,b\,$ appearing in the weak formulation of the
Navier--Stokes equations is defined as usual, namely,
\begin{equation*}
b(\uvec,\vvec,\wvec):=\int_{\Omega}(\uvec\cdot\nabla)\vvec\cdot \wvec \,dx\qquad\forall\, \uvec,\vvec,\wvec\in V_{div}\,.
\end{equation*}
We recall that we have
$$b(\uvec,\wvec,\vvec)\,=\,-\,b(\uvec,\vvec,\wvec) \qquad\forall\,\uvec,\vvec,\wvec\in
V_{div},$$
and that in two dimensions of space there holds the estimate
\begin{align*}
&|b(\uvec,\vvec,\wvec)|\,\leq\, \widehat C_1\,\|\uvec\|^{1/2}\,\|\nabla \uvec\|^{1/2}\,
\|\nabla \vvec\|\,\|\wvec\|^{1/2}\,\|\nabla \wvec\|^{1/2}\qquad\forall\, \uvec,\vvec,\wvec\in V_{div},
\end{align*}
with a constant $\widehat C_1>0$ that only depends on $\Omega$.

We will also need to use the operator $\,B:=-\Delta+I\,$ with homogeneous Neumann boundary condition. It is well known that $\,B:D(B)\subset H\to H\,$ is an unbounded linear operator in $\,H\,$ with the domain
$$D(B)=\big\{\varphi\in H^2(\Omega):\:\:\partial\varphi/\partial\nvec=0\,\,
\mbox{ on }\partial\Omega\big\},$$
and that $B^{-1}:H\to H$ is a selfadjoint compact operator on $H$. By a classical spectral theorem there exist a sequence of eigenvalues $\mu_j$ with $0<\mu_1\leq\mu_2\leq\cdots$ and $\mu_j\to\infty$,
and a family of associated eigenfunctions $w_j\in D(B)$ such that $Bw_j=\mu_j\, w_j\,$ for all
$j\in \mathbb{N}$. The family  $\,\{w_j\}_{j \in\mathbb{N}}\,$ forms an orthonormal basis in
$H$ and is also orthogonal in $V$ and $D(B)$.

Finally, we recall two inequalities, which are valid in two dimensions of space and will be used repeatedly  in the course of our analysis, namely the particular case of the Gagliardo-Nirenberg inequality (see, e.g., \cite{BIN})
\begin{align}
\label{GN}
&\Vert v\Vert_{L^4(\Omega)}\,\leq\,\widehat C_2\,\Vert v\Vert^{1/2}\,\Vert v\Vert_V^{1/2}\qquad\forall\, v\in V,
\end{align}
as well as   Agmon's inequality (see \cite{AG})
\begin{align}
&\Vert v\Vert_{L^\infty(\Omega)}\,\leq\,\widehat C_3\,\Vert v\Vert^{1/2}\,
\Vert v\Vert_{H^2(\Omega)}^{1/2}\qquad\forall\, v\in H^2(\Omega).
\label{Agmon}
\end{align}
In both these inequalities, the positive constants $\widehat C_2,\widehat C_3$ depend only on $\,\Omega \subset\mathbb{R}^2$.

We are ready now to state the general assumptions on
 the data of the state system. We remark that  for the well-posedness results cited below
not always all of these assumptions are needed in every case; however, they
seem to be indispensable for the analysis of the control problem. Since we
focus on the control aspects here, we confine ourselves to these assumptions and refer the interested reader to \cite{CFG,FGG,FGK} for further details. We postulate:
\begin{description}
\item[(H1)] \,\,It holds $\,\uvec_0\in V_{div}\,$ and $\,\varphi_0\in H^2(\Omega)$.
\item[(H2)] \,\,$F\in C^4(\mathbb{R})$ satisfies the following conditions:
\begin{align}
\label{F1}
&
\exists\, \hat c_1>0: \quad
F^{\prime\prime}(s)+a(x)\geq \hat c_1 \,\mbox{ for all $\,s\in\mathbb{R}\,$ and
a.\,e. }\,x\in\Omega.
\\
\label{F2}
&
\exists\, \hat c_2>0, \,\hat c_3>0, \,p>2: \quad
F^{\prime\prime}(s)+a(x)\geq \hat c_2\,\vert s\vert^{p-2} - \hat c_3\,
\mbox{ for all $\,s\in\mathbb{R}\,$ and a.\,e. }\,x\in\Omega.\qquad
\\
\label{F3}
&
\exists\, \hat c_4>0, \,\hat c_5\geq0, \,r\in(1,2]: \quad
|F^\prime(s)|^r\leq \hat c_4\,|F(s)|+\hat c_5\,\mbox{ for all $\,s\in\mathbb{R}$.}
\end{align}
\item[(H3)]
\,\,$\nu\in C^2(\mathbb{R})$,  and there are constants
$\,\hat\nu_1>0,\,\hat\nu_2>0$ such that
\begin{equation}
\label{nu}
\hat \nu_1\,\leq\,\nu(s)\,\leq\, \hat\nu_2\,\quad\forall\, s\in\mathbb{R}.
\end{equation}
 \item[(H4)]
\,\,The kernel $\,K\,$ satisfies $\,K(x)=K(-x)\,$ for all $\,x\,$
 in its domain, as well as $\,a(x)=\int_\Omega K(x-y)\,dy\,\ge\,0\,$ for a.\,e.
 $\,x\in\Omega$. Moreover, one of the following two conditions is fulfilled:\\[2mm]
 (i) \,\,It holds $\,K\in W^{2,1}(B_\rho)$,
  where $\rho:={\rm diam\,}\Omega\,$
 and $\,B_\rho:=\{z\in \mathbb{R}^2:\,\,|z|<\rho\}$.\\[2mm]
(ii) \,$K\,$ is a so-called {\em admissible\,} kernel, which (cf. \cite[Definition 1]{BRB}) for the two-dimensional case means that
we have
\vspace*{-2mm}
\begin{align}
\label{K1}
&
K\in W^{1,1}_{loc}(\mathbb{R}^2)\cap C^3(\mathbb{R}^2 \setminus \{0\});
\\
\label{K2}
&
K \,\mbox{ is radially symmetric, $\,K(x) = \widetilde K(\vert x\vert)$,
 and $\,\widetilde K$\, is non-increasing};\qquad\quad\,\,
\\
 \label{K3}
&
\mbox{$\widetilde K^{\prime\prime}(r)\,$ and $\,\widetilde K^\prime(r)/r\,$ are
monotone functions on $\,(0,r_0)\,$ for some $\,r_0>0$};
\\
\label{K4}
&
\vert D^3 K(x) \vert \,\leq\,\hat c_6\,\vert x\vert^{-3}\,
\mbox{ for some }\, \hat c_6>0.
\end{align}
\end{description}

\begin{oss}
\label{kernrem}
{\upshape
Notice that both the physically relevant two-dimensional Newtonian and Bessel kernels do not fulfill the condition (i) in {\bf (H4)}; they are however known to be admissible in the sense of (ii).  The advantage of dealing with admissible kernels is due to the fact that such kernels have the property (cf. \cite[Lemma 2]{BRB}) that for
all $\,p\in (1,+\infty)$ there exists some constant $C_p>0$ such that
\begin{equation}\label{adm}
\Vert\nabla (\nabla K\ast \psi)\Vert_{L^p(\Omega)^{2\times 2}} \,\leq\, C_p\, \Vert \psi \Vert_{L^p(\Omega)} \quad\,\forall\,\psi\in L^p(\Omega).
\end{equation}
We also observe that under the hypothesis {\bf (H4)} we have
 $\,a\in W^{1,\infty}(\Omega)$.
}
\end{oss}
The following result combines results that have been shown in the papers
\cite{CFG, FGG, FGK}; in particular, we refer to \cite[Thms. 5 and 6]{FGG} and \cite[Thm. 2 and Remarks 2 and 5]{FGK}.
\begin{thm}
\label{thm1}
Suppose that {\bf (H1)}--{\bf (H4)} are fulfilled. Then the state system
{\rm (\ref{sy1})--(\ref{ics})} has for every $\,\vvec\in L^2(0,T;G_{div})\,$ a unique strong solution $[\uvec,\varphi]$ with the regularity properties
\begin{align}
\label{regu}
&\uvec\in C^0([0,T];V_{div})\cap L^2(0,T;H^2(\Omega)^2), \,\quad \uvec_t \in L^2(0,T;G_{div}),
\\
\label{regphi}
&\varphi \in C^0([0,T];H^2(\Omega)), \,\quad \varphi_t\in C^0([0,T];H)
\cap L^2(0,T;V),
\\
\label{regmu}
&\mu:=a\,\varphi-K\ast\varphi+F'(\varphi)\in C^0([0,T];H^2(\Omega)).
\end{align}
Moreover, there exists a continuous and nondecreasing function $\,\mathbb{Q}_1:
[0,+\infty)\to [0,+\infty)$, which only depends on the data $F$, $K$, $\nu$, $\Omega$,
$T$, $\uvec_0$ and $\varphi_0$, such that
\begin{align}
\label{bound1}
&
\|\uvec\|_{C^0([0,T];V_{div})\cap L^2(0,T;H^2(\Omega)^2)} \,+\,\|\uvec_t\|_{L^2(0,T;G_{div})}
\,+\,\|\varphi\|_{C^0([0,T];H^2(\Omega))}\,+\,\|\varphi_t\|_{C^0([0,T];H)
\cap L^2(0,T;V)}\nonumber
\\
&
\le\,\mathbb{Q}_1\!\left(\|\vvec\|_{L^2(0,T;G_{div})}\right) .
\end{align}
\end{thm}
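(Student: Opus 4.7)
The theorem is a compilation of results from \cite{CFG, FGG, FGK}, so my plan is to outline a unified Galerkin scheme that produces simultaneously all of the regularity listed in \eqref{regu}--\eqref{regmu} while tracking the dependence of every constant on $\|\vvec\|_{L^2(0,T;G_{div})}$, from which $\mathbb{Q}_1$ is then assembled. I would start from a double finite-dimensional truncation built on the eigenbases $\{\wvec_j\}$ of the Stokes operator $A$ and $\{w_j\}$ of $B$, solve the resulting ODE system locally in time, and extend globally once the a priori bounds below are in place; passage to the limit is by standard Aubin--Lions compactness.

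The basic estimate is the energy identity obtained by testing \eqref{sy1} with $\mu$ and \eqref{sy3} with $\uvec$: the coupling terms $\mu\nabla\varphi\cdot\uvec$ and $(\uvec\cdot\nabla\varphi)\mu$ cancel after integration by parts using $\mathrm{div}(\uvec)=0$, while $\int_\Omega\varphi_t\mu\,dx=\frac{d}{dt}\mathcal{E}_{\mathrm{CH}}(\varphi)$, with $\mathcal{E}_{\mathrm{CH}}$ the standard nonlocal Cahn--Hilliard energy, follows from the symmetry of $K$. Using \textbf{(H2)} for the coercivity of $F$ and \textbf{(H3)} for the viscous term, one obtains $\uvec\in L^\infty(0,T;G_{div})\cap L^2(0,T;V_{div})$, $\varphi\in L^\infty(0,T;L^p(\Omega))$, and $\nabla\mu\in L^2(Q)^2$, with all bounds controlled in terms of the data and $\|\vvec\|_{L^2(0,T;G_{div})}$ via Young's inequality on $(\vvec,\uvec)$. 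The algebraic identity
\begin{equation*}
\bigl(a(x)+F''(\varphi)\bigr)\,\nabla\varphi \,=\, \nabla\mu + \nabla K\ast\varphi - (\nabla a)\,\varphi,
\end{equation*}
which follows from \eqref{sy2}, combined with \eqref{F1}, Remark~\ref{kernrem}, and \eqref{adm}, then allows pointwise inversion and, after one further differentiation and bootstrapping on $\mu$, yields $\varphi\in C^0([0,T];H^2(\Omega))$ and $\mu\in C^0([0,T];H^2(\Omega))$.

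The main analytical obstacle is the $L^2(0,T;H^2(\Omega)^2)$ bound on $\uvec$: testing \eqref{sy3} with $A\uvec$ produces the commutator $\nu'(\varphi)\nabla\varphi\cdot D\uvec$, which cannot be absorbed unless one already has strong-norm control on $\nabla\varphi$; the $H^2$-bound on $\varphi$ just obtained is exactly what is needed, and combined with \eqref{GN} and \eqref{Agmon} it closes the estimate by interpolation. The convective term $(\uvec\cdot\nabla)\uvec$ and the capillary force $\mu\nabla\varphi$ are treated similarly using the trilinear estimate on $b$. The regularity $\uvec_t\in L^2(0,T;G_{div})$ then follows by testing \eqref{sy3} with $P\uvec_t$, and $\varphi_t\in C^0([0,T];H)\cap L^2(0,T;V)$ by differentiating \eqref{sy1} in time and testing the resulting parabolic equation with $\varphi_t$. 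Uniqueness proceeds as in \cite[Thm.~6]{FGG}: one subtracts two solutions, tests the resulting system in energy norm, and closes via Gronwall, using the established regularity to control the non-constant viscosity contribution. Finally, \eqref{bound1} is obtained by tracking at each of the steps above the continuous, monotone dependence of the constants on $\|\vvec\|_{L^2(0,T;G_{div})}$, and defining $\mathbb{Q}_1$ as the composition of the resulting functions.
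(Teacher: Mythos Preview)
The paper does not actually prove Theorem~\ref{thm1}: it is stated without proof as a compilation of results from \cite{CFG,FGG,FGK}, with explicit pointers to \cite[Thms.~5 and 6]{FGG} and \cite[Thm.~2 and Remarks~2 and 5]{FGK}. So there is no argument in the paper against which to compare your proposal; what you have written is a high-level reconstruction of the strategy carried out in those references, and at that level it is broadly faithful to them (Galerkin approximation on the Stokes and Neumann eigenbases, energy identity with cancellation of the coupling terms, pointwise inversion of $a+F''(\varphi)$ via \eqref{F1} to extract $\nabla\varphi$ from $\nabla\mu$, higher-order testing with $A\uvec$ and $\uvec_t$, time-differentiation of \eqref{sy1}, and uniqueness by subtraction and Gronwall).

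Two points where your sketch is thinner than the actual proofs in \cite{FGK,FGG} and would need care if you were to write it out. First, the order of the bootstrap: you obtain $\varphi\in C^0([0,T];H^2(\Omega))$ before the $L^2(0,T;H^2)$ bound on $\uvec$, but the time-continuity of $\varphi$ in $H^2$ typically comes \emph{after} controlling $\varphi_t$, which in turn uses the improved velocity regularity; in \cite{FGK} the estimates are interlaced rather than sequential as you describe, and one first gets $\varphi\in L^\infty(0,T;H^2)$ and only later upgrades to continuity. Second, ``one further differentiation and bootstrapping on $\mu$'' hides a genuine computation: the $H^2$ control of $\varphi$ from that of $\mu$ requires estimating second derivatives of $F'(\varphi)$ and the convolution terms, using \eqref{adm} in the admissible-kernel case, and this is where the hypothesis $F\in C^4$ and Agmon's inequality \eqref{Agmon} are actually used. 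Neither point is a fatal gap, but both would need to be fleshed out for the argument to close.
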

From Theorem 1 it follows that the {\em control-to-state operator} $\,{\cal S}:
\vvec\mapsto {\cal S}(\vvec):=[\uvec,\varphi]$, is well defined as a mapping from
$\,L^2(0,T;G_{div})\,$ into the Banach space defined by the regularity properties
of $[\uvec,\varphi]$ as given by (\ref{regu}) and (\ref{regphi}).

We now establish some global stability estimates for the strong solutions
to problem \eqref{sy1}--\eqref{ics}.
Let us begin with the following result (see \cite[Thm. 6 and Lemma 2]{FGG}).

\begin{lem}
\label{stabest1}
Suppose that {\bf (H1)}--{\bf (H4)} are fulfilled, and assume that controls
$\,\vvec_i\in L^2(0,T; G_{div})$, $i=1,2$, are given and that $[\uvec_i,\varphi_i]:={\cal S}(\vvec_i)$, $i=1,2$, are the associated solutions to {\rm \eqref{sy1}--\eqref{ics}}. Then
there is a continuous function $\,\mathbb{Q}_2:[0,+\infty)^2\to [0,+\infty)$, which
is nondecreasing in both its arguments and only depends on the data $F$, $K$, $\nu$, $\Omega$,
$T$, $\uvec_0$ and $\varphi_0$, such that we have for every $t\in (0,T]$ the estimate
\begin{align}
&
\Vert\uvec_2-\uvec_1\Vert_{C^0([0,t];G_{div})}^2
\,+\,\Vert\uvec_2-\uvec_1\Vert_{L^2(0,t;V_{div})}^2\,+\,\Vert\varphi_2-\varphi_1\Vert_{C^0([0,t];H)}^2
\,+\,\Vert\nabla(\varphi_2-\varphi_1)\Vert_{L^2(0,t;H)}^2\nonumber
\\[1mm]
\label{stabi1}
&
\leq\,\mathbb{Q}_2\big(\Vert\vvec_1\Vert_{L^2(0,T;G_{div})},\Vert\vvec_2\Vert_{L^2(0,T;G_{div})} \big)\,\Vert\vvec_2-\vvec_1\Vert_{L^2(0,T;(V_{div})')}^2\,.
\end{align}
\end{lem}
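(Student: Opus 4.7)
Set $\uvec:=\uvec_2-\uvec_1$, $\varphi:=\varphi_2-\varphi_1$, $\mu:=\mu_2-\mu_1$, and $\vvec:=\vvec_2-\vvec_1$. Subtracting the two copies of \eqref{sy1}--\eqref{ics} and splitting the quadratic nonlinearities in the symmetric way $(\uvec_2\cdot\nabla)\uvec_2-(\uvec_1\cdot\nabla)\uvec_1=(\uvec_1\cdot\nabla)\uvec+(\uvec\cdot\nabla)\uvec_2$ and $\mu_2\nabla\varphi_2-\mu_1\nabla\varphi_1=\mu_1\nabla\varphi+\mu\nabla\varphi_2$ yields a coupled system for $(\uvec,\varphi)$ with zero initial and homogeneous boundary data, in which the control enters only through the forcing $\vvec$ on the right-hand side of the Navier--Stokes part. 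Throughout the estimates we freely exploit the uniform bounds on $\uvec_i$, $\varphi_i$, $\nabla\varphi_i$, $F^{(k)}(\varphi_i)$, $\nu(\varphi_i)$ and $\mu_i$ provided by Theorem~\ref{thm1}.

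\smallskip\noindent
\textbf{Step 1 (Cahn--Hilliard estimate).} Test the difference equation $\varphi_t+\uvec_1\cdot\nabla\varphi+\uvec\cdot\nabla\varphi_2=\Delta\mu$ with $\varphi$. Since $\partial\mu/\partial\nvec=0$, integration by parts gives $\int_\Omega\Delta\mu\,\varphi=-\int_\Omega\nabla\mu\cdot\nabla\varphi$, and the identity
\[
\nabla\mu=\varphi\nabla a+a\nabla\varphi-\nabla(K\ast\varphi)+F''(\varphi_2)\nabla\varphi+\bigl[F''(\varphi_2)-F''(\varphi_1)\bigr]\nabla\varphi_1
\]
combined with assumption (F1) extracts the coercive contribution $\hat c_1\|\nabla\varphi\|^2$. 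The convective term $\int(\uvec_1\cdot\nabla\varphi)\varphi$ vanishes by ${\rm div}(\uvec_1)=0$, while $\int(\uvec\cdot\nabla\varphi_2)\varphi$ is controlled by H\"older and \eqref{GN}, using $\|\nabla\varphi_2\|_{L^4}\le C$ from Theorem~\ref{thm1}. The $F''$-difference term is handled by the Lipschitz continuity of $F''$ on the range of the $\varphi_i$ (available since $F\in C^4$ and $\varphi_i\in L^\infty(Q)$), and the convolution term by \eqref{adm} (resp.\ Young's convolution inequality under (H4)(i)). All remaining contributions are absorbed into $\epsilon\|\nabla\varphi\|^2$ plus terms of the form $C\|\varphi\|^2$ and $C\|\uvec\|^2+\epsilon\|\nabla\uvec\|^2$ via Young's inequality.

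\smallskip\noindent
\textbf{Step 2 (Navier--Stokes estimate).} Test the Navier--Stokes difference equation with $\uvec$. The identity $2\|D\uvec\|^2=\|\nabla\uvec\|^2$ valid for $\uvec\in V_{div}$, together with (H3), produces the coercive term $\hat\nu_1\|\nabla\uvec\|^2$. Antisymmetry kills $b(\uvec_1,\uvec,\uvec)$, and the two-dimensional trilinear bound gives $|b(\uvec,\uvec_2,\uvec)|\le\epsilon\|\nabla\uvec\|^2+C\|\nabla\uvec_2\|^2\|\uvec\|^2$. The viscosity-perturbation integral $\int[\nu(\varphi_2)-\nu(\varphi_1)]D\uvec_1:D\uvec$ is estimated using the Lipschitz continuity of $\nu$ together with Agmon's inequality \eqref{Agmon}, which bounds $\|\varphi\|_{L^\infty}\le\widehat C_3\|\varphi\|^{1/2}\|\varphi\|_{H^2}^{1/2}$ (with $\|\varphi\|_{H^2}$ uniformly bounded by Theorem~\ref{thm1}), yielding $\epsilon\|\nabla\uvec\|^2+C\|\nabla\uvec_1\|^2\|\varphi\|$ with a bounded coefficient. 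For the coupling terms, $\|\mu\|\le C\|\varphi\|$ follows directly from (H2) and the uniform $L^\infty$-bound on $\varphi_i$, so $|\int\mu\nabla\varphi_2\cdot\uvec|\le\epsilon\|\nabla\uvec\|^2+C\|\varphi\|^2$, and $|\int\mu_1\nabla\varphi\cdot\uvec|\le\epsilon\|\nabla\varphi\|^2+C\|\mu_1\|_{L^\infty}^2\|\uvec\|^2$. The control term is treated as
\[
\int_\Omega\vvec\cdot\uvec\,dx=\langle\vvec,\uvec\rangle_{V_{div}}\le\|\vvec\|_{(V_{div})'}\|\nabla\uvec\|\le\epsilon\|\nabla\uvec\|^2+C_\epsilon\|\vvec\|_{(V_{div})'}^2,
\]
which is exactly what produces the dual norm on the right-hand side of \eqref{stabi1}.

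\smallskip\noindent
\textbf{Step 3 (Combination and Gronwall).} Adding the two energy inequalities and choosing the $\epsilon$'s small enough so that all $\|\nabla\varphi\|^2$ and $\|\nabla\uvec\|^2$ terms on the right are absorbed, one obtains a differential inequality of the form
\[
\tfrac{d}{dt}\bigl(\|\uvec\|^2+\|\varphi\|^2\bigr)+c\bigl(\|\nabla\uvec\|^2+\|\nabla\varphi\|^2\bigr)\le g(t)\bigl(\|\uvec\|^2+\|\varphi\|^2\bigr)+C\|\vvec\|_{(V_{div})'}^2,
\]
with $g\in L^1(0,T)$ whose $L^1$-norm is controlled by $\mathbb{Q}_1(\|\vvec_i\|_{L^2(0,T;G_{div})})$. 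Integration over $(0,t)$, exploiting the vanishing initial data, and the classical Gronwall lemma yield \eqref{stabi1}, with $\mathbb{Q}_2$ generated from $\mathbb{Q}_1$.

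\smallskip\noindent
\textbf{Main obstacle.} The delicate point is getting only the weak dual norm $\|\vvec_2-\vvec_1\|_{L^2(0,T;(V_{div})')}$ on the right-hand side. This forces every nonlinear cross-term in the Navier--Stokes estimate to be controlled purely by the viscous dissipation $\|\nabla\uvec\|^2$; in particular the viscosity-perturbation term requires Agmon's inequality to trade the missing $L^\infty_t H^2_x$ regularity of $\uvec_1$ for the uniform $L^\infty$-control of $\varphi$ in space provided by the strong regularity of the $\varphi_i$ in Theorem~\ref{thm1}.
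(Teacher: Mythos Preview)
Your overall strategy---test the two difference equations with $\uvec$ and $\varphi$, add, and apply Gronwall---is exactly what the paper does (following \cite[Thm.~6]{FGG}). However, there is a genuine gap in your treatment of the viscosity--perturbation term, and it shows up precisely in the passage from Step~2 to Step~3.

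You estimate $\int_\Omega[\nu(\varphi_2)-\nu(\overline\varphi_1)]D\uvec_1\!:\!D\uvec\,dx$ via Agmon, obtaining a bound of the form $\epsilon\|\nabla\uvec\|^2+C\|\nabla\uvec_1\|^2\,\|\varphi\|$. But this leaves $\|\varphi\|$ to the \emph{first} power, and such a term cannot be cast as $g(t)\bigl(\|\uvec\|^2+\|\varphi\|^2\bigr)$ in the differential inequality you write down in Step~3. If you try to absorb it by Young, $C\|\varphi\|\le \|\varphi\|^2+C'$, the constant $C'$ survives as an inhomogeneous source after Gronwall and destroys the proportionality to $\|\vvec_2-\vvec_1\|^2_{L^2(0,T;(V_{div})')}$ in \eqref{stabi1}. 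In short, Agmon throws away too much: replacing $\|\varphi\|_{H^2}^{1/2}$ by a constant costs you the homogeneity needed for a \emph{linear} stability estimate.

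The fix is not to use Agmon here at all, but rather the $L^4\times L^4\times L^2$ H\"older split together with Gagliardo--Nirenberg on \emph{both} factors:
\[
\bigl|{\textstyle\int}[\nu(\varphi_2)-\nu(\varphi_1)]D\uvec_1\!:\!D\uvec\bigr|
\le C\,\|\varphi\|_{L^4}\,\|D\uvec_1\|_{L^4}\,\|\nabla\uvec\|
\le C\,\|\varphi\|^{1/2}\|\varphi\|_V^{1/2}\,\|\nabla\uvec_1\|^{1/2}\|\uvec_1\|_{H^2}^{1/2}\,\|\nabla\uvec\|,
\]
and then two applications of Young give
$\epsilon\|\nabla\uvec\|^2+\epsilon'\|\nabla\varphi\|^2+C\bigl(1+\|\nabla\uvec_1\|^2\|\uvec_1\|_{H^2}^2\bigr)\|\varphi\|^2$.
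This is exactly the contribution $\|\nabla\uvec_1\|^2\|\uvec_1\|_{H^2}^2$ that appears in the paper's Gronwall weight $\gamma(t)$, and it belongs to $L^1(0,T)$ since $\uvec_1\in C^0([0,T];V_{div})\cap L^2(0,T;H^2(\Omega)^2)$ by Theorem~\ref{thm1}. Your worry in the ``Main obstacle'' paragraph about missing $L^\infty_tH^2_x$ regularity of $\uvec_1$ is therefore unfounded: $L^2_tH^2_x$ is all that is needed, and it is available.
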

\begin{proof}
We follow the lines of the proof of \cite[Thm. 6]{FGG} (see also \cite[Lemma 2]{FGG}),
just sketching the main steps.
We test the difference between \eqref{sy3},
written for each of the two solutions, by $\uvec:=\uvec_2-\uvec_1$ in $G_{div}$,
and the difference between \eqref{sy1}, \eqref{sy2}, written for each solution,
by $\varphi:=\varphi_2-\varphi_1$ in $H$. Adding the resulting identities, and
arguing exactly as in the proof of \cite[Thm. 6]{FGG}, we are led
to a differential inequality of the form
\begin{align}
&
\frac{1}{2}\,\frac{d}{dt}\,\big(\Vert\uvec(t)\Vert^2\,+\,
\Vert\varphi(t)\Vert^2\big)\,+\,\frac{\hat\nu_1}{4}\,\Vert\nabla\uvec(t)\Vert^2
\,+\,\frac{\hat c_1}{4}\,\Vert\nabla\varphi(t)\Vert^2\nonumber
\\[1mm]
&
\leq \gamma(t)\,\big(\Vert\uvec(t)\Vert^2\,+\,\Vert\varphi(t)\Vert^2\big)\,+\,
\frac{1}{\hat\nu_1}\,\Vert\vvec(t)\Vert_{(V_{div})'}^2 \quad\,\mbox{for a.\,e. }\,
t\in (0,T), \nonumber
\end{align}
where $\gamma\in L^1(0,T)$ is given by
\begin{align*}
&
\gamma(t) =c\,\big(1\,+\,\Vert \nabla\uvec_{1}(t)\Vert ^{2}\,
\Vert \uvec_{1}(t)\Vert_{H^{2}(\Omega)}^{2}\,+\,
\Vert \nabla \uvec_{2}(t)\Vert ^{2}\,+\,\Vert \varphi _{1}(t)\Vert
_{L^{4}(\Omega)}^{2}\,+\,\Vert \varphi _{2}(t)\Vert _{L^{4}(\Omega)}^{2}
\\[1mm]
&
\hspace*{16mm}+\,\Vert \varphi _{1}(t)\Vert
_{H^{2}(\Omega)}^{2}+\Vert \nabla \varphi _{1}(t)\Vert ^{2}\,\Vert \varphi _{1}(t)
\Vert
_{H^{2}(\Omega)}^{2}\big).
\end{align*}
The desired stability estimate then follows from applying Gronwall's lemma to the above differential inequality.
\end{proof}
Lemma 1 already implies that the control-to-state mapping $\,{\cal S}\,$ is locally Lipschitz continuous as a mapping from $\,L^2(0,T;(V_{div})')\,$ (and, a fortiori, also from $L^2(0,T; V_{div})$) into the space $[C^0([0,T];G_{div})
\cap L^2(0,T;V_{div})]\times [C^0([0,T];H)
\cap L^2(0,T;V)]$. Since this result is not yet
sufficient to establish differentiability, we need to improve the stability estimate.
The following higher order stability estimate for the solution component $\,\varphi\,$ will turn out to be the key tool for the proof of differentiability of the control-to-state mapping.

\begin{lem}
\label{stabest2}
Suppose that the assumptions of Lemma 1 are fulfilled. Then
there is a continuous function $\,\mathbb{Q}_3:[0,+\infty)^2\to [0,+\infty)$, which
is nondecreasing in both its arguments and only depends on the data $F$, $K$, $\nu$, $\Omega$,
$T$, $\uvec_0$ and $\varphi_0$, such that we have for every $t\in (0,T]$ the estimate
\begin{align}
&
\Vert\uvec_2-\uvec_1\Vert_{{C^0([0,t];G_{div})}}^2
\,+\,\Vert\uvec_2-\uvec_1\Vert_{L^2(0,t;V_{div})}^2\,+\,\Vert\varphi_2-\varphi_1\Vert_{{C^0([0,t];V)}}^2
\,+\,\Vert \varphi_2-\varphi_1\Vert_{L^2(0,t;H^2(\Omega))}^2\nonumber
\\[1mm]
\label{stabi2}
&
+\,{\Vert\varphi_2}-\varphi_1\Vert_{H^1(0,t;H)}^2\,\leq\,\mathbb{Q}_3\big(\Vert\vvec_1\Vert_{L^2(0,T;G_{div})},\Vert\vvec_2\Vert_{L^2(0,T;G_{div})} \big)\,\Vert\vvec_2-\vvec_1\Vert_{L^2(0,T;(V_{div})')}^2\,.
\end{align}
 \end{lem}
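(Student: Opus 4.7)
Only the $\varphi$-part of \eqref{stabi2} is new; the velocity estimate coincides with the one already supplied by Lemma 1. I would test the difference of the Cahn--Hilliard equation by $\varphi_t$ in $H$, exploiting the full state regularity from Theorem 1 that Lemma 1 did not use—in particular $\varphi_i\in C^0([0,T];H^2(\Omega))\hookrightarrow L^\infty(Q)$, $\varphi_{i,t}\in C^0([0,T];H)\cap L^2(0,T;V)$, $\uvec_i\in L^2(0,T;H^2(\Omega)^2)$, and $F\in C^4$. Writing $\uvec:=\uvec_2-\uvec_1$, $\varphi:=\varphi_2-\varphi_1$, $\mu:=\mu_2-\mu_1$, $\vvec:=\vvec_2-\vvec_1$, and using the mean-value theorem to write $F'(\varphi_2)-F'(\varphi_1)=h\,\varphi$ with $h(x,t):=\int_0^1 F''(\varphi_1+s\,\varphi)\,ds$, the assumption (F1) yields $\tilde h:=a+h\ge\hat c_1$, so that $\mu=\tilde h\,\varphi-K\ast\varphi$ and
\begin{align*}
\nabla\mu \,=\, \tilde h\,\nabla\varphi + \varphi\,\nabla\tilde h - \nabla K\ast\varphi,
\end{align*}
with $\tilde h$, $\nabla\tilde h$, $\Delta\tilde h$, $\tilde h_t$ and $\nabla\tilde h_t$ bounded in suitable Bochner norms controlled by $\mathbb{Q}_1(\|\vvec_1\|_{L^2(0,T;G_{div})})+\mathbb{Q}_1(\|\vvec_2\|_{L^2(0,T;G_{div})})$.

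\textbf{Main energy step.} Testing $\varphi_t+\uvec_2\cdot\nabla\varphi+\uvec\cdot\nabla\varphi_1=\Delta\mu$ by $\varphi_t$ and using $\partial_n\mu=0$ to write $(\Delta\mu,\varphi_t)=-(\nabla\mu,\nabla\varphi_t)$, the principal piece reorganizes as
\begin{align*}
-(\tilde h\,\nabla\varphi,\nabla\varphi_t)\,=\,-\tfrac12\tfrac{d}{dt}\!\int_\Omega\!\tilde h\,|\nabla\varphi|^2\,dx \,+\, \tfrac12\!\int_\Omega\!\tilde h_t\,|\nabla\varphi|^2\,dx,
\end{align*}
while the two lower-order pairings $-(\varphi\,\nabla\tilde h,\nabla\varphi_t)$ and $(\nabla K\ast\varphi,\nabla\varphi_t)$ are transferred to time derivatives by integration by parts in $t$; the endpoint terms at $t=0$ vanish since $\varphi(0)=0$, and the remainders, which involve $\nabla\tilde h_t$ or $\nabla K\ast\varphi_t$ paired with $\nabla\varphi$, are manageable thanks to Theorem 1 (plus $F\in C^4$) in the first case, and to property \eqref{adm} in case (ii) of (H4) (or directly to $K\in W^{2,1}$ in case (i)) in the second. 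The convective terms $(\uvec_2\cdot\nabla\varphi,\varphi_t)$ and $(\uvec\cdot\nabla\varphi_1,\varphi_t)$ are estimated by H\"older, the 2D inequalities \eqref{GN} and \eqref{Agmon}, and Theorem 1. Absorbing a small multiple of $\|\varphi_t\|^2$ on the left via Young and using Lemma 1 for the forcing, one obtains a differential inequality of the schematic form
\begin{align*}
\tfrac{d}{dt}\!\left[\tfrac{\hat c_1}{2}\|\nabla\varphi\|^2+R(t)\right] \,+\, \tfrac12\|\varphi_t\|^2 \,\le\, \gamma(t)\big(\|\varphi\|^2+\|\nabla\varphi\|^2\big) \,+\, c\,\|\vvec\|_{(V_{div})'}^2,
\end{align*}
with $R$ a lower-order quantity and $\gamma\in L^1(0,T)$. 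Integrating in time, using Lemma 1 to bound the $\|\varphi\|^2$- and $\|\nabla\varphi\|_{L^2(H)}^2$-integrals by $\mathbb{Q}_2\,\|\vvec\|_{L^2(0,T;(V_{div})')}^2$, and applying Gronwall's lemma yields the $C^0([0,t];V)$- and $H^1(0,t;H)$-parts of \eqref{stabi2}.

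\textbf{$H^2$-bound and main obstacle.} For the remaining $L^2(0,t;H^2(\Omega))$-bound, I would take one more divergence of $\nabla\mu$ to obtain
\begin{align*}
\tilde h\,\Delta\varphi \,=\, \Delta\mu - 2\,\nabla\tilde h\cdot\nabla\varphi - \varphi\,\Delta\tilde h + \Delta K\ast\varphi,
\end{align*}
substitute $\Delta\mu=\varphi_t+\uvec_2\cdot\nabla\varphi+\uvec\cdot\nabla\varphi_1$ from the equation, divide by $\tilde h\ge\hat c_1$, and take $L^2(\Omega)$-norms in space before integrating in time. The needed bounds on $\varphi_t$, on $\uvec_2\cdot\nabla\varphi$ and $\uvec\cdot\nabla\varphi_1$, and on $\nabla\varphi$ are all already available, and since $\int_\Omega\varphi(t)\,dx=0$ standard elliptic/Poincar\'e arguments close the $H^2$-estimate. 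The main technical obstacle is the central step: the only $L^2$-in-time control one can hope for on $\varphi_t$ is in $H$, not in $V$, so the factor $\nabla\varphi_t$ in $(\nabla\mu,\nabla\varphi_t)$ cannot be absorbed directly, and the two lower-order pieces of $\nabla\mu$ must be moved off $\nabla\varphi_t$ by a time-integration-by-parts argument whose remainders are controllable only thanks to the kernel admissibility \eqref{adm} and to the $C^4$-regularity of $F$ combined with the full $H^2$-regularity of $\varphi_i$ from Theorem 1.
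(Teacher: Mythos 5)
Your overall strategy --- testing the difference of the Cahn--Hilliard equation with $\varphi_t$ and removing the awkward $\nabla\varphi_t$-pairings by integration by parts in time --- is genuinely different from the paper's, which tests with $\widetilde{\mu}_t$ so that, after re-substituting \eqref{diff1} and integrating by parts in space, only $\varphi_t$ (never $\nabla\varphi_t$) appears, the coercive term $\int_\Omega(a+F''(\varphi_1))\varphi_t^2\,dx\ge\hat c_1\Vert\varphi_t\Vert^2$ being extracted from $(\varphi_t,\widetilde{\mu}_t)$; see \eqref{2left}. Your device could in principle be made to work, but as written the proposal has a genuine gap in the step that produces the $L^2(0,t;H^2(\Omega))$ bound. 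The difference $\varphi=\varphi_2-\varphi_1$ satisfies \emph{no} boundary condition: the homogeneous Neumann condition holds for $\widetilde{\mu}$, not for $\varphi$. Hence a bound on $\Vert\Delta\varphi\Vert$ together with $\int_\Omega\varphi\,dx=0$ does not yield $\Vert\varphi\Vert_{H^2(\Omega)}$ by ``standard elliptic/Poincar\'e arguments'': without a boundary condition, $\Vert\Delta\varphi\Vert+\Vert\varphi\Vert_V$ does not control the full Hessian (harmonic functions already defeat such an estimate). This is precisely why the paper differentiates the relation \eqref{diff2} twice, tests the pointwise identity \eqref{secderiv} with each $\partial^2_{ij}\varphi$ so that every individual second derivative of $\varphi$ is controlled without any boundary condition for $\varphi$ (estimates \eqref{est12}--\eqref{est18}), and applies elliptic regularity only to $\widetilde{\mu}$, which does satisfy $\partial\widetilde{\mu}/\partial\nvec=0$ (estimate \eqref{est20}). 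Two further points in the same step: in case (ii) of {\bf (H4)} the term you write as $\Delta K\ast\varphi$ is not defined ($K\in W^{1,1}_{loc}$ only) and must be read as $\mathrm{div}\,(\nabla K\ast\varphi)$ and estimated through \eqref{adm}; and $\varphi\,\Delta\tilde h$ contains $\Delta\varphi$ and $\Delta\varphi_1$ themselves, so it must be split by Agmon plus Young as in \eqref{est15} rather than simply moved to the right-hand side.

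A second, related problem is that your ``schematic'' differential inequality cannot be obtained with a right-hand side free of $\Vert\varphi\Vert_{H^2(\Omega)}$. The term $\tfrac12\int_\Omega\tilde h_t|\nabla\varphi|^2\,dx$, the remainders produced by your time-integration-by-parts (which involve $\nabla\tilde h_t$, hence $\nabla\varphi_{i,t}$, quantities bounded only by the large constant of Theorem 1 and not by the small difference), and the $\Vert\nabla\varphi\Vert_{L^4(\Omega)}$-factors all force terms of the form $\epsilon\,\Vert\varphi\Vert_{H^2(\Omega)}^2$ on the right, exactly as in \eqref{diffineq}. These can only be absorbed once the estimate $\Vert\varphi\Vert_{H^2(\Omega)}\le C\big(\Vert\varphi_t\Vert+\Vert\nabla\uvec\Vert+\Vert\varphi\Vert_V\big)$ (the analogue of \eqref{est21}) is available \emph{before} Gronwall is applied; the $H^2$-recovery therefore cannot be relegated to a final post-processing step, and since that recovery is precisely the flawed step, the argument as proposed does not close. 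If you replace the elliptic-regularity-for-$\varphi$ shortcut by the paper's second-derivative testing of \eqref{diff2} and interleave it with your energy step, the $\varphi_t$-testing route should go through, at the price of essentially reproducing the paper's $H^2$-control machinery.
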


\begin{proof}
For the sake of a shorter exposition, we will in the following always avoid to write the time variable $t$ as argument of the involved functions; no confusion will arise from this notational convention.

Set $\uvec:=\uvec_2-\uvec_1$ and $\varphi:=\varphi_2-\varphi_1$. Then it follows
from \eqref{sy1}, \eqref{sy2} that
\begin{align}
&\varphi_t=\Delta\widetilde{\mu}-\uvec\cdot\nabla\varphi_1-\uvec_2\cdot\nabla\varphi,\label{diff1}\\
&\widetilde{\mu}:=a\,\varphi-{{K}}\ast\varphi+F'(\varphi_2)-F'(\varphi_1).\label{diff2}
\end{align}
We multiply \eqref{diff1} by $\widetilde{\mu}_t$ in $H$ and integrate by parts, using the first boundary condition of \eqref{bcs} (which holds also for $\widetilde{\mu}$). We obtain
the identity
\begin{align}
&\frac{1}{2}\,\frac{d}{dt}\,\Vert\nabla\widetilde{\mu}\Vert^2\,+\,
(\varphi_t,\widetilde{\mu}_t)=-\,(\uvec\cdot\nabla\varphi_1,\widetilde{\mu}_t)
-(\uvec_2\cdot\nabla\varphi,\widetilde{\mu}_t).\label{diffid}
\end{align}
Thanks to \eqref{diff2}, we can first rewrite the second term on the left-hand side of \eqref{diffid} as follows:
\begin{align}
(\varphi_t,\widetilde{\mu}_t)=\,&\big(\varphi_t,a\,\varphi_t-{K}\ast\varphi_t+(F''(\varphi_2)-F''(\varphi_1))\varphi_{2,t}+F''(\varphi_1)\varphi_t\big)\nonumber\\
=\,&\int_\Omega\big(a+F''(\varphi_1)\big)\varphi_t^2\,dx \,+\,\big(\Delta\widetilde{\mu}-\uvec\cdot\nabla\varphi_1-\uvec_2\cdot\nabla\varphi,-{{K}}\ast\varphi_t\big)\nonumber\\
&+\big(\varphi_t,(F''(\varphi_2)-F''(\varphi_1))\varphi_{2,t}\big)\nonumber\\
=\,&\int_\Omega\big(a+F''(\varphi_1)\big)\varphi_t^2\,dx
 +(\nabla\widetilde{\mu},\nabla {{K}}\ast\varphi_t)-(\uvec\varphi_1,\nabla{K}\ast\varphi_t)-(\uvec_2\varphi,\nabla {K}\ast\varphi_t)\nonumber\\
&+\big(\varphi_t,(F''(\varphi_2)-F''(\varphi_1))\varphi_{2,t}\big).\label{2left}
\end{align}
Here we have employed \eqref{diff1} in the second identity of \eqref{2left}, while in the third identity integrations by parts have been performed using the boundary conditions
$\,\partial\widetilde{\mu}/\partial\nvec=0\,$ and $\,\uvec_i=0\,$ on $\,\Sigma$, as well as
the incompressibility conditions for $\uvec_i$, $i=1,2$.

We now estimate the last four terms on the right-hand side of \eqref{2left}. Using Young's inequality for convolution integrals, we have, for every $\,\epsilon>0$,
\begin{align}
&
|(\nabla\widetilde{\mu},\nabla K\ast\varphi_t)|\,\le\,
\|\nabla\widetilde\mu\|\,\|\nabla K\ast\varphi_t\|
\,\le\,\|\nabla\widetilde\mu\|\,\|\nabla K\|_{L^1(B_\rho)}\,\|\varphi_t\|\,
\leq \,\epsilon\,\Vert\varphi_t\Vert^2
\,+\,C_{\epsilon,K}\,\Vert\nabla\widetilde{\mu}\Vert^2\,.\label{est4}
\end{align}

Here, and throughout this proof, we use the following notational convention: by $C_\sigma$
we denote positive constants that may depend on the global data and on the quantities indicated by the index $\sigma$; however, $C_\sigma$ does not depend on
the norms of the data of the two solutions. The actual value of $C_\sigma$ may change from
line to line or even within lines. On the other hand, $\Gamma_\sigma$ will denote positive
constants that may not only depend on the global data and on the quantities indicated by the index $\sigma$,  but also on $\vvec_1$ and $\vvec_2$. More precisely, we have
\begin{align}
\Gamma_\sigma=\widehat\Gamma\big(\Vert\vvec_1\Vert_{L^2(0,T;G_{div})},
\Vert\vvec_2\Vert_{L^2(0,T;G_{div})}\big)\nonumber
\end{align}
with a continuous function $\widehat\Gamma:[0,+\infty)^2\to [0,+\infty)$ which is
nondecreasing in both its variables. Also the actual value of $\Gamma_\sigma$ may change even within the same line. Now, again using Young's inequality for convolution integrals, as well
as H\"older's inequality, we have
\begin{align}
&
(\uvec\,\varphi_1,\nabla {{K}}\ast\varphi_t)|
\,\leq\, C_K\,\Vert\uvec\Vert_{L^4(\Omega)^2}\,\Vert\varphi_1\Vert_{L^4(\Omega)}
\,\Vert\varphi_t\Vert\,\leq\,\epsilon\,\Vert\varphi_t\Vert^2\,+\,\Gamma_
{\epsilon,K}\,\Vert\nabla\uvec\Vert^2,
\label{est5}
\\[1mm]
&
|(\uvec_2\,\varphi,\nabla {{K}}\ast\varphi_t)|
\,\leq \,C_K\,\Vert\uvec_2\Vert_{L^4(\Omega)^2}\,\Vert\varphi\Vert_{L^4(\Omega)}
\,\Vert\varphi_t\Vert\, \leq\,\epsilon\,\Vert\varphi_t\Vert^2\,+\,
\Gamma_{\epsilon,K}\,\Vert\varphi\Vert_V^2\,.
\label{est6}
\end{align}
Moreover, invoking {\bf (H2)}, (\ref{bound1}) and the Gagliardo-Nirenberg inequality (\ref{GN}), we infer that
\begin{align}
&
\big|\big(\varphi_t,(F''(\varphi_2)-F''(\varphi_1))\,\varphi_{2,t}\big)\big|
\,\leq\,\Vert\varphi_t\Vert\,\Vert F''(\varphi_2)-F''(\varphi_1)\Vert_{L^4(\Omega)}\,
\Vert\varphi_{2,t}\Vert_{L^4(\Omega)}\nonumber
\\[1mm]
&
\leq \,{\Gamma_F}\,\Vert\varphi_t\Vert\,\Vert\varphi\Vert_{L^4(\Omega)}\,
\Vert\varphi_{2,t}\Vert_{L^4(\Omega)}\, \leq\,
{\Gamma_F}\,\Vert\varphi_t\Vert\,\Vert\varphi\Vert^{1/2}\,\Vert\varphi\Vert_V^{1/2}\,
\Vert\varphi_{2,t}\Vert^{1/2}\,\Vert\varphi_{2,t}\Vert_V^{1/2}\nonumber
\\[1mm]
&
\leq\,\epsilon\,\Vert\varphi_t\Vert^2\,+\,{\Gamma_{\epsilon,F}}\,\Vert\varphi_{2,t}\Vert_V^2\,\Vert\varphi\Vert^2
\,+\,{\Gamma_{\epsilon,F}}\,\Vert\varphi\Vert_V^2\,.
\label{est7}
\end{align}

As far as the terms on the right-hand side of \eqref{diffid} are concerned, we can in view
of \eqref{diff2} write
\begin{align}
&(\uvec\cdot\nabla\varphi_1,\widetilde{\mu}_t)=
\big(\uvec\cdot\nabla\varphi_1,a\,\varphi_t-{{K}}\ast\varphi_t+(F''(\varphi_2)-F''(\varphi_1))\,\varphi_{2,t}+F''(\varphi_1)\,\varphi_t\big),
\label{right1}\\
&(\uvec_2\cdot\nabla\varphi,\widetilde{\mu}_t)
=\big(\uvec_2\cdot\nabla\varphi,a\,\varphi_t-{{K}}\ast\varphi_t+(F''(\varphi_2)-F''(\varphi_1))\,\varphi_{2,t}+F''(\varphi_1)\,\varphi_t\big),
\label{right2}
\end{align}
where the terms on the right-hand side of \eqref{right1}, \eqref{right2} can be estimated in the following way:
\begin{align}
&\big|\big(\uvec\cdot\nabla\varphi_1,a\,\varphi_t-{{K}}\ast\varphi_t\big)\big|
\,\leq\, C_K\,\Vert\uvec\Vert_{L^4(\Omega)^2}\,\Vert\varphi_1\Vert_{H^2(\Omega)}\,
\Vert\varphi_t\Vert \,\leq\,\epsilon\,\Vert\varphi_t\Vert^2\,+\,\Gamma_{\epsilon,K}\,\Vert\nabla\uvec\Vert^2\,,\label{est8}\\[4mm]
&\big|\big(\uvec\cdot\nabla\varphi_1,(F''(\varphi_2)-F''(\varphi_1))\,\varphi_{2,t}\big)\big|
\,\leq\,{\Gamma_F}\,\Vert\uvec\Vert\,\Vert\varphi_1\Vert_{H^2(\Omega)}\,\Vert\varphi\Vert_{L^6(\Omega)}\,\Vert\varphi_{2,t}\Vert_{L^6(\Omega)}\nonumber\\[1mm]
&\leq \,{\Gamma_F}\, \Vert\uvec\Vert\,\Vert\varphi\Vert_V\,\Vert\varphi_{2,t}\Vert_V
\,\leq\, {\Gamma_F}\,\Vert\varphi_{2,t}\Vert_V^2\,\Vert\uvec\Vert^2\,+\,{\Gamma_F}\,
\Vert\varphi\Vert_V^2\,,\label{est9}
\\[4mm]
&\big|\big(\uvec\cdot\nabla\varphi_1,F''(\varphi_1)\,\varphi_t\big)\big|
\,\leq\, {\Gamma_F}\, \Vert\uvec\Vert_{L^4(\Omega)^2}\,\Vert\varphi_1\Vert_{H^2(\Omega)}\,\Vert\varphi_t\Vert\,\leq\,\epsilon\,\Vert\varphi_t\Vert^2\,
+\,{\Gamma_{\epsilon,F}}\,\Vert\nabla\uvec\Vert^2\,,\label{est10}
\\[4mm]
&\big|\big(\uvec_2\cdot\nabla\varphi,a\,\varphi_t-{{K}}\ast\varphi_t\big)\big|
\,\leq\, C_K\,\Vert\uvec_2\Vert_{L^4(\Omega)^2}\,\Vert\nabla\varphi\Vert_{L^4(\Omega)^2}\,\Vert\varphi_t\Vert
\,\leq\,\Gamma_K\,
\Vert\nabla\varphi\Vert^{1/2}\,\Vert\nabla\varphi\Vert_V^{1/2}\,\Vert\varphi_t\Vert
\nonumber\\[1mm]
&\leq\,\epsilon\,\Vert\varphi_t\Vert^2\,+\,\Gamma_{\epsilon,{{K}}}\,\Vert\nabla\varphi\Vert\,\Vert\varphi\Vert_{H^2(\Omega)}
\,\leq\,\epsilon\,\Vert\varphi_t\Vert^2\,+\,\epsilon\,\Vert\varphi\Vert_{H^2(\Omega)}^2
\,+\,\Gamma_{\epsilon,K}\,\Vert\nabla\varphi\Vert^2\,,\label{est1}
\\[4mm]
&\big|\big(\uvec_2\cdot\nabla\varphi,(F''(\varphi_2)-F''(\varphi_1))\,\varphi_{2,t}\big)\big|
\,\leq\,{\Gamma_F}\,\Vert\uvec_2\Vert_{L^4(\Omega)^2}\,\Vert\nabla\varphi\Vert_{L^4(\Omega)^2}\,
\Vert\varphi\Vert_{L^4(\Omega)}\,\Vert\varphi_{2,t}\Vert_{L^4(\Omega)}
\nonumber\\[1mm]
&\leq\,{\Gamma_F}\,\Vert\varphi\Vert_{H^2(\Omega)}\,
\Vert\varphi\Vert^{1/2}\,\Vert\varphi\Vert_V^{1/2}\,\Vert\varphi_{2,t}\Vert^{1/2}\,\Vert\varphi_{2,t}\Vert_V^{1/2}
\,\leq\,\epsilon\,\Vert\varphi\Vert_ {H^2(\Omega)}^2\,+\,{\Gamma_{\epsilon,F}}\,\Vert\varphi\Vert\,
\Vert\varphi\Vert_V\,\Vert\varphi_{2,t}\Vert_V
\nonumber\\[1mm]
&\leq\,\epsilon\,\Vert\varphi\Vert_{H^2(\Omega)}^2\,+\,{\Gamma_{\epsilon,F}}\,\Vert\varphi\Vert_V^2
\,+\,{\Gamma_{\epsilon,F}}\,\Vert\varphi_{2,t}\Vert_V^2\,\Vert\varphi\Vert^2\,,\label{est2}
\\[4mm]
&\big|\big(\uvec_2\cdot\nabla\varphi,F''(\varphi_1)\,\varphi_t\big)\big|
\,\leq\,{\Gamma_F}\,\Vert\uvec_2\Vert_{L^4(\Omega)^2}\,\Vert\nabla\varphi\Vert_{L^4(\Omega)^2}\,\Vert\varphi_t\Vert
\,\leq\,{\Gamma_F}\,\Vert\nabla\varphi\Vert^{1/2}\,\Vert\nabla\varphi\Vert_V^{1/2}\,\Vert\varphi_t\Vert\nonumber\\[1mm]
&\leq\,\epsilon\,\Vert\varphi_t\Vert^2\,+\,{\Gamma_{\epsilon,F}}\,\Vert\nabla\varphi\Vert\,\Vert\varphi\Vert_{H^2(\Omega)}\,
\leq\,\epsilon\Vert\varphi_t\Vert^2\,+\,\epsilon\,\Vert\varphi\Vert_{H^2(\Omega)}^2
\,+\,{\Gamma_{\epsilon,F}}\,\Vert\nabla\varphi\Vert^2\,,
\label{est3}
\end{align}
where we have used the H\"older and Gagliardo-Nirenberg inequalities and \eqref{bound1} again.

We now insert  the estimates
\eqref{est4}--\eqref{est7} and \eqref{est8}--\eqref{est3} in \eqref{diffid},
taking \eqref{2left}, \eqref{right1} and \eqref{right2} into account. By the assumption
 (\ref{F1}) in hypothesis {\bf (H2)}, and choosing $\,\epsilon>0\,$ small enough (i.\,e.,
$\epsilon\leq {{\hat c_1/16}}$), we obtain the estimate
\begin{align}
\frac{d}{dt}\,\Vert\nabla\widetilde{\mu}\Vert^2+{{\hat c_1}}\,\Vert\varphi_t\Vert^2
\,&\leq\, C_{\epsilon,{{K}}}\,\Vert\nabla\widetilde{\mu}\Vert^2\,+\,
{\Gamma_{\epsilon,K,F}}\,\big(\Vert\nabla\uvec\Vert^2+\Vert\varphi\Vert_V^2\big)\nonumber\\[1mm]
&\quad+\,{\Gamma_{\epsilon,F}}\,\Vert\varphi_{2,t}\Vert_V^2\,\big(\Vert\uvec\Vert^2+\Vert\varphi\Vert^2\big)\,+\,6\,\epsilon\,\Vert\varphi\Vert_{H^2(\Omega)}^2\,.
\label{diffineq}
\end{align}
Next, we aim to show that the $H^2$ norm of $\varphi$ can be controlled by the $H^2$ norm of $\widetilde{\mu}$.
To this end, we take the second-order derivatives of \eqref{diff2} to find that
 \begin{align}
 \partial_{ij}^2\widetilde{\mu}
 &\,=\,a\,\partial_{ij}^2\varphi+\partial_i a\,\partial_j\varphi+\partial_j a\,\partial_i\varphi
 +\varphi\,\partial_i(\partial_j a)-\partial_i\big(\partial_j {{K}}\ast\varphi\big)\nonumber\\
 &\quad \,\,+\big(F''(\varphi_2)-F''(\varphi_1)\big)\partial_{ij}^2\varphi_2+F''(\varphi_1)\,\partial_{ij}^2\varphi\nonumber\\
 &\quad\,\, +\big(F'''(\varphi_2)-F'''(\varphi_1)\big)\,\partial_i\varphi_2\,\partial_j\varphi_2
 +F'''(\varphi_1)\,(\partial_i\varphi_2\,\partial_j\varphi+\partial_i\varphi\,\partial_j\varphi_1)\,.
 \label{secderiv}
 \end{align}
Let us we multiply \eqref{secderiv} by $\partial_{ij}^2\varphi$ in $H$ and then estimate the terms on the right-hand side of the resulting equality.
We have, invoking (\ref{F1}),
\begin{align}
\Big(\big(a+F''(\varphi_1)\big)\,\partial_{ij}^2\varphi,\partial_{ij}^2\varphi\Big)
&\,\geq\,{{\hat c_1}}\,\Vert\partial_{ij}^2\varphi\Vert^2,
\label{est12}
\end{align}
and, for every $\delta>0$ (to be fixed later),
\begin{align}
&\big(\partial_i a\,\partial_j\varphi+\partial_j a\,\partial_i\varphi,\partial_{ij}^2\varphi\big)
\,\leq\,C_{{{K}}}\,\Vert\nabla\varphi\Vert\,\Vert\partial_{ij}^2\varphi\Vert\,
\leq\,\delta\,\Vert\partial_{ij}^2\varphi\Vert^2\,+\,C_{\delta,{{K}}}\,\Vert\nabla\varphi\Vert^2,
\label{est13}\\[2mm]
&\big(\varphi\,\partial_i(\partial_j a)-\partial_i(\partial_j {{K}}\ast\varphi),\partial_{ij}^2\varphi\big)
\,\leq\, C_{{{K}}}\,\Vert\varphi\Vert\,\Vert\partial_{ij}^2\varphi\Vert\,
\,\leq\,\delta\,\Vert\partial_{ij}^2\varphi\Vert^2\,+\,C_{\delta,{{K}}}\,\Vert\varphi\Vert^2,\label{est14}
\end{align}
\noindent
where the first inequality in the estimate \eqref{est14} follows from (\ref{adm}) if
$K$ is admissible, while in the case $\,K\in W^{2,1}(B_\rho)\,$
the first term in the product on the left-hand side of \eqref{est14} can be rewritten
as $\,\varphi\,\partial_{ij}^2 a-\partial_{ij}^2 K\ast\varphi\,$ so that
\eqref{est14} follows immediately from Young's inequality for convolution integrals.
Moreover, invoking Agmon's inequality (\ref{Agmon}) and (\ref{bound1}), we have
\begin{align}
&\Big(\big(F''(\varphi_2)-F''(\varphi_1)\big)\,\partial_{ij}^2\varphi_2,\partial_{ij}^2\varphi\Big)
\,\leq\,
{\Gamma_F}\,\Vert\varphi\Vert_{L^\infty(\Omega)}\,\Vert\varphi_2\Vert_{H^2(\Omega)}\,\Vert\partial_{ij}^2\varphi\Vert
\nonumber\\[1mm]
&\,\leq\,{\Gamma_F}\,\Vert\varphi\Vert^{1/2}\,\Vert\varphi\Vert_{H^2(\Omega)}^{1/2}\,\Vert\partial_{ij}^2\varphi\Vert
\,\leq\, {\Gamma_F}\Vert\varphi\Vert^{1/2}\,\Vert\varphi\Vert_{H^2(\Omega)}^{3/2}\,
\,\leq\, \delta\,\Vert\varphi\Vert_{H^2(\Omega)}^2\,+\,{\Gamma_{\delta,F}}\,\Vert\varphi\Vert^2.\label{est15}
\end{align}
In addition, by virtue of H\"older's inequality and (\ref{bound1}), we have
\begin{align}
&\Big(\big(F'''(\varphi_2)-F'''(\varphi_1)\big)\,\partial_i\varphi_2\,\partial_j\varphi_2,\partial_{ij}^2\varphi\Big)
\,\leq\,{\Gamma_F}\,\Vert\varphi\Vert_{L^6(\Omega)}\,\Vert\partial_i\varphi_2\Vert_{L^6(\Omega)}\, \Vert\partial_j\varphi_2\Vert_{L^6(\Omega)}\,
\Vert\partial_{ij}^2\varphi\Vert\nonumber\\[1mm]
&\leq\,{\Gamma_F}\, \Vert\varphi\Vert_V\,\Vert\varphi_2\Vert_{H^2(\Omega)}^2\,\Vert\partial_{ij}^2\varphi\Vert
\,\leq\, \delta\,\Vert\partial_{ij}^2\varphi\Vert^2\,+\,{\Gamma_{\delta,F}}\,\Vert\varphi\Vert_V^2,\label{est16}
\end{align}
and, invoking the Gagliardo-Nirenberg inequality (\ref{GN}) and (\ref{bound1}),
\begin{align}
&\Big(F'''(\varphi_1)\,(\partial_i\varphi_2\,\partial_j\varphi+\partial_i\varphi\,\partial_j\varphi_1),\partial_{ij}^2\varphi\Big) \nonumber\\
&\leq\,{\Gamma_F}\,\big(\Vert\partial_i\varphi_2\Vert_{L^4(\Omega)}\,\Vert\partial_j\varphi\Vert_{L^4(\Omega)}
\,+\,\Vert\partial_i\varphi\Vert_{L^4(\Omega)}\,\Vert\partial_j\varphi_1\Vert_{L^4(\Omega)}\big)\,\Vert\partial_{ij}^2\varphi\Vert\nonumber\\
&\leq\,{\Gamma_F}\,\big(\Vert\varphi_1\Vert_{H^2(\Omega)}+\Vert\varphi_2\Vert_{H^2(\Omega)}\big)
\,\Vert\nabla\varphi\Vert_{L^4(\Omega)^2}\,\Vert\partial_{ij}^2\varphi\Vert\,
\leq\,{\Gamma_F}\,\Vert\nabla\varphi\Vert^{1/2}\,\Vert\nabla\varphi\Vert_V^{1/2}\,
\Vert\partial_{ij}^2\varphi\Vert\nonumber\\
&\leq\,{\Gamma_F}\,\Vert\nabla\varphi\Vert^{1/2}\,\Vert\varphi\Vert_{H^2(\Omega)}^{3/2}\,
\leq\,\delta\Vert\varphi\Vert_{H^2(\Omega)}^2\,+\,{\Gamma_{\delta,F}}\,\Vert\nabla\varphi\Vert^2.\label{est17}
\end{align}
Hence, by means of \eqref{est12}--\eqref{est17},  we obtain that
\begin{align}
&\big(\partial_{ij}^2\widetilde{\mu},\partial_{ij}^2\varphi\big)
\,\geq\,\frac{{{\hat c_1}}}{2}\,\Vert\partial_{ij}^2\varphi\Vert^2\,-\,2\,\delta\,\Vert\varphi\Vert_{H^2(\Omega)}^2
-\Gamma_{\delta,{{K}}}\,\Vert\varphi\Vert_V^2,
\nonumber
\end{align}
provided we choose $0<\delta\leq {{\hat c_1/6}}$.
On the other hand, we have
\begin{align}
&\big(\partial_{ij}^2\widetilde{\mu},\partial_{ij}^2\varphi\big)\,\leq\,\frac{{{\hat c_1}}}{4}\,
\Vert\partial_{ij}^2\varphi\Vert^2 \,+\,\frac{1}{{{\hat c_1}}}\,\Vert\partial_{ij}^2\widetilde{\mu}\Vert^2,
\nonumber
\end{align}
and, by combining the last two estimates, we find that
\begin{align}
&\Vert\partial_{ij}^2\widetilde{\mu}\Vert^2\,\geq\,\frac{{{\hat c_1^2}}}{4}\,\Vert\partial_{ij}^2\varphi\Vert^2-2\,{{\hat c_1}}\,\delta\,\Vert\varphi\Vert_{H^2(\Omega)}^2
-{\Gamma_{\delta,K,F}}\,\Vert\varphi\Vert_V^2,
\nonumber
\end{align}
where the factor $\hat c_1$ is absorbed in the constant ${\Gamma_{\delta,K,F}}$.
From this, taking the sum over $i,j=1,2$, and fixing $0<\delta\leq \hat c_1/64$, we get the desired control,
\begin{align}
&\Vert\widetilde{\mu}\Vert_{H^2(\Omega)}^2\,\geq\,\frac{{{\hat c_1}}^2}{8}\,\Vert\varphi\Vert_{H^2(\Omega)}^2-\Gamma_{{K,F}}\Vert\varphi\Vert_V^2.
\label{est18}
\end{align}
Let us now prove that the $H^2$ norm of $\widetilde{\mu}$ can be
controlled in terms of the $L^2$ norm of $\varphi_t$. Indeed,
from \eqref{diff1} we obtain, invoking the H\"older and Gagliardo-Nirenberg inequalities,
\begin{align}
\Vert\Delta\widetilde{\mu}\Vert
&\,\leq\,\Vert\varphi_t\Vert+\Vert\uvec\Vert_{L^4(\Omega)^2}\,\Vert\nabla\varphi_1\Vert_{L^4(\Omega)^2}+
\Vert\uvec_2\Vert_{L^4(\Omega)^2}\,\Vert\nabla\varphi\Vert_{L^4(\Omega)^2}\nonumber\\
&\,\leq\,\Vert\varphi_t\Vert+C\,\Vert\nabla\uvec\Vert\Vert\varphi_1\Vert_{H^2(\Omega)}+
C\,\Vert\uvec_2\Vert_{L^4(\Omega)^2}\Vert\nabla\varphi\Vert^{1/2}\Vert\varphi\Vert_{H^2(\Omega)}^{1/2}.
\label{est19}
\end{align}
Thanks to a classical elliptic regularity result (notice that $\partial\widetilde{\mu}/\partial\nvec=0$ on $\partial\Omega$),
we can infer from (\ref{diff2}), \eqref{est19} and (\ref{GN}) the estimate
\begin{align}
\Vert\widetilde{\mu}\Vert_{H^2(\Omega)}
&\,\leq\, c_e\Vert-\Delta\widetilde{\mu}+\widetilde{\mu}\Vert\,\leq\, c_e\Vert\Delta\widetilde{\mu}\Vert+{\Gamma_{K,F}}\,\Vert\varphi\Vert\nonumber\\
&\,\leq\, c_e\,\Vert\varphi_t\Vert+\Gamma\,\Vert\nabla\uvec\Vert+\Gamma\,\Vert\nabla\varphi\Vert^{1/2}\Vert\varphi\Vert_{H^2(\Omega)}^{1/2} +{\Gamma_{K,F}}\,\Vert\varphi\Vert\,,
\label{est20}
\end{align}
where $c_e>0$ depends only on $\Omega$. Combining \eqref{est18} with \eqref{est20}, we then deduce that
\begin{align}
&\frac{{{\hat c_1}}}{4}\Vert\varphi\Vert_{H^2(\Omega)}\,\leq\, c_e\,\Vert\varphi_t\Vert+{\Gamma_{K,F}}\,\big(\Vert\nabla\uvec\Vert+\Vert\varphi\Vert_V\big).
\label{est21}
\end{align}
With \eqref{est21} now available, we can now go back to \eqref{diffineq} and fix $\epsilon>0$ small enough
(i.e, $\epsilon\leq\epsilon_\ast$, where $\epsilon_\ast>0$ depends only on ${{\hat c_1}}$ and $c_e$)
to arrive at the differential inequality
\begin{align}
&\frac{d}{dt}\,\Vert\nabla\widetilde{\mu}\Vert^2+\frac{{{\hat c_1}}}{2}\Vert\varphi_t\Vert^2
\,\leq\, C_{{{K}}}\,\Vert\nabla\widetilde{\mu}\Vert^2+{\Gamma_{K,F}}\,
\big(\Vert\nabla\uvec\Vert^2+\Vert\varphi\Vert_V^2\big)
+{\Gamma_{F}}\,\Vert\varphi_{2,t}\Vert_V^2\,\big(\Vert\uvec\Vert^2+\Vert\varphi\Vert^2\big).
\label{diffineq2}
\end{align}
Now observe that $\widetilde\mu(0)=0$. Thus, applying Gronwall's lemma to \eqref{diffineq2}, and using \eqref{bound1} for
$\varphi_{2,t}$, we  obtain, for every $t\in [0,T]$,
\begin{align}
\Vert\nabla\widetilde{\mu}(t)\Vert^2
&
\,\leq\, \Gamma\Big(
\int_0^t\big(\Vert\nabla\uvec(\tau)\Vert^2+\Vert\varphi(\tau)\Vert_V^2\big)d\tau\nonumber
\\
&
\,\,\,\qquad+\big(\Vert\uvec\Vert_{{C^0([0,t];G_{div})}}^2
+\Vert\varphi\Vert_{{C^0([0,t];H)}}^2
\big)\int_0^t\Vert\varphi_{2,t}(\tau)\Vert_V^2 d\tau\Big)\,,\nonumber
\end{align}
where, for the sake of a shorter notation, we have omitted the indexes $K$ and $F$ in the constant $\Gamma$.
Hence, using the stability estimate of Lemma \ref{stabest1}, we obtain from the last two inequalities that
\begin{align}
&\Vert\nabla\widetilde{\mu}(t)\Vert^2
\,\leq\, \Gamma\,\Vert\vvec_2-\vvec_1\Vert_{L^2(0,T;(V_{div})')}^2.
\label{est11}
\end{align}
Now, taking the gradient of \eqref{diff2}, and arguing as in the proof of \cite[Lemma 2]{FGG},
 it is not difficult to see that we have
\begin{align}
&(\nabla\widetilde{\mu},\nabla\varphi)\,\geq\,\frac{{{\hat c_1}}}{4}\,\Vert\nabla\varphi\Vert^2
-\Gamma\,
\Vert\varphi\Vert^2,\nonumber
\end{align}
and this estimate, together with
\begin{align}
&(\nabla\widetilde{\mu},\nabla\varphi)\,\leq\,
\frac{{{\hat c_1}}}{8}\,\Vert\nabla\varphi\Vert^2+\frac{2}
{{{\hat c_1}}}\,\Vert\nabla\widetilde{\mu}\Vert^2,\notag
\end{align}
yields
\begin{align}
\Vert\nabla\widetilde{\mu}\Vert^2\,\geq\,\frac{{{\hat c_1}}^2}{16}\,
\Vert\nabla\varphi\Vert^2-\Gamma\,\Vert\varphi\Vert^2\,,
\nonumber
\end{align}
where the factor $\hat c_1/2$ is again absorbed in the constant $\Gamma$.
This last estimate, combined with \eqref{est11}, gives
\begin{align}
&\Vert\varphi(t)\Vert_V^2\,\leq\, \Gamma\,{\Vert\vvec_2-\vvec_1\Vert_{L^2(0,T;(V_{div})')}^2}.
\label{est22}
\end{align}
By integrating \eqref{diffineq2} in time over $[0,t]$, and
using \eqref{est11} and the stability estimate of Lemma \ref{stabest1} again, we also get
\begin{align}
&{{\hat c_1}}\int_0^t\Vert\varphi_t(\tau)\Vert^2\, d\tau
\,\leq\, \Gamma\,{\Vert\vvec_2-\vvec_1\Vert_{L^2(0,T;(V_{div})')}^2}.
\label{est23}
\end{align}
The stability estimate
\eqref{stabi2} now follows from \eqref{est22}, \eqref{est23}, \eqref{est21}  and Lemma \ref{stabest1}.

\end{proof}


\section{Optimal control}

We now study the optimal control problem \textbf{(CP)},
where throughout this section we assume that the cost functional $\,{\cal J}\,$ is given by
(\ref{costfunct}) and that the general hypothesis {\bf (H1)}--{\bf (H4)} are fulfilled.  Moreover,
we assume that
 the set of admissible controls $\mathcal{V}_{ad}$ is given by
\begin{align}
\label{Vad}
&\mathcal{V}_{ad}:=\big\{\vvec\in L^2(0,T;G_{div}):\:\: v_{a,i}(x,t)\leq v_i(x,t)\leq v_{b,i}(x,t),\:\:\mbox{a.e. }(x,t)\in Q,\:\: i=1,2\big\},
\end{align}
with prescribed functions $\vvec_a,\vvec_b\in L^2(0,T;G_{div})\cap L^\infty(Q)^2$.
According {with} Theorem \ref{thm1}, the control-to-state mapping
\begin{align}
&{\cal S}:\mathcal{V}\to\mathcal{H},\quad\,\vvec\in\mathcal{V}\mapsto {\cal S}(\vvec):=[\uvec,\varphi]\in\mathcal{H},
\label{control-state}
\end{align}
where the space $\mathcal{H}$ is given by
\begin{align}
\mathcal{H}
&\,:=\,\big[H^1(0,T;G_{div})\cap C^0([0,T];V_{div})\cap L^2(0,T;H^2(\Omega)^2)\big]\nonumber
\\
&\qquad\times \big[C^1([0,T];H)
\cap H^1(0,T;V)\cap C^0([0,T];H^2(\Omega))\big],
\end{align}
is well defined and locally bounded. Moreover, it follows from Lemma 2 that $\,{\cal S}\,$ is locally
Lipschitz continuous from ${\cal V}$ into the space
\begin{align}
&
\mathcal{W}:=\big[ { C^0([0,T];G_{div})}
\cap L^2(0,T;V_{div})\big]\times\big[H^1(0,T;H)\cap C^0([0,T];V)\cap L^2(0,T;H^2(\Omega))\big].
\end{align}
Notice also that problem \textbf{(CP)} is equivalent to the minimization problem
\begin{align}
&\min_{\vvec\in\mathcal{V}_{ad}} f(\vvec),\nonumber
\end{align}
for the reduced cost functional defined by $f(\vvec):=\mathcal{J}\big({\cal S}(\vvec),\vvec\big)$, for every $\vvec\in\mathcal{V}$.

We have the following existence result.
\begin{thm}
Assume that the hypotheses {\bf(H1)}--{\bf (H4)} are satisfied
and that ${\cal V}_{ad}$ is given by
{\rm (\ref{Vad})}. Then the optimal control problem {\bf (CP)} admits a solution.
\end{thm}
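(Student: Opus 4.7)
The plan is to use the direct method of the calculus of variations. Since $\mathcal{V}_{ad}$ is nonempty (assuming $\vvec_a\le\vvec_b$ componentwise a.e.) and bounded in $L^\infty(Q)^2$, hence in $\mathcal V$, and since $\mathcal J\ge 0$, the infimum $d:=\inf_{\vvec\in\mathcal{V}_{ad}} f(\vvec)$ is a finite nonnegative number. Pick a minimizing sequence $\{\vvec_n\}\subset \mathcal{V}_{ad}$ with $f(\vvec_n)\to d$, and let $[\uvec_n,\varphi_n]:=\mathcal S(\vvec_n)$. Because $\{\vvec_n\}$ is uniformly bounded in $L^2(0,T;G_{div})$, Theorem~\ref{thm1} yields, via (\ref{bound1}) and the fact that $\mathbb{Q}_1$ is nondecreasing, uniform bounds on $\{[\uvec_n,\varphi_n]\}$ in the space $\mathcal H$ defined in (\ref{control-state}).

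Next I would extract weakly/weakly-$*$ convergent subsequences (not relabeled): $\vvec_n\rightharpoonup\bar\vvec$ in $L^2(0,T;G_{div})$, together with $\uvec_n \rightharpoonup^* \bar\uvec$ in $L^\infty(0,T;V_{div})\cap L^2(0,T;H^2(\Omega)^2)$, $\uvec_{n,t}\rightharpoonup \bar\uvec_t$ in $L^2(0,T;G_{div})$, $\varphi_n\rightharpoonup^*\bar\varphi$ in $L^\infty(0,T;H^2(\Omega))$ and $\varphi_{n,t}\rightharpoonup \bar\varphi_t$ in $L^2(0,T;V)$. Since $\mathcal V_{ad}$ is convex and strongly closed in $\mathcal V$, it is weakly closed, so $\bar\vvec\in\mathcal V_{ad}$. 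By the Aubin--Lions lemma applied to the above spaces, I obtain the strong convergences $\uvec_n\to \bar\uvec$ in $C^0([0,T];G_{div})\cap L^2(0,T;V_{div})$ and $\varphi_n\to\bar\varphi$ in $C^0([0,T];V)\cap L^2(0,T;H^2(\Omega)^{-})$, where the latter can be upgraded (using the embedding $H^2(\Omega)\hookrightarrow L^\infty(\Omega)\cap C^0(\overline\Omega)$ in 2D) to uniform convergence of $\varphi_n$ on $\overline Q$.

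The main obstacle is passing to the limit in the nonlinear terms of (\ref{sy1})--(\ref{sy3}) in the weak formulation. The convective terms $\uvec_n\cdot\nabla\varphi_n$ and $(\uvec_n\cdot\nabla)\uvec_n$ are handled by combining the strong convergence of $\uvec_n$ in $L^2(0,T;V_{div})$ with the weak convergence of $\nabla\varphi_n$ and $\nabla\uvec_n$ in $L^2(Q)$. Because $F\in C^4(\mathbb R)$ and $\{\varphi_n\}$ is uniformly bounded in $L^\infty(Q)$, the uniform convergence $\varphi_n\to\bar\varphi$ on $\overline Q$ gives $F'(\varphi_n)\to F'(\bar\varphi)$ and $F''(\varphi_n)\to F''(\bar\varphi)$ uniformly, and analogously $\nu(\varphi_n)\to\nu(\bar\varphi)$ uniformly; combined with the weak convergence of $D\uvec_n$ in $L^2$ this takes care of the viscous term $2\,\mathrm{div}(\nu(\varphi_n)D\uvec_n)$ and of the chemical potential $\mu_n=a\varphi_n-K\ast\varphi_n+F'(\varphi_n)$, whose convolution part is continuous by Young's inequality. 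For the capillary forcing $\mu_n\nabla\varphi_n$ one uses that $\mu_n\to\bar\mu$ strongly in, say, $L^2(0,T;V)$ (thanks to (\ref{regmu}) and the strong convergence of $\varphi_n$) together with the weak convergence of $\nabla\varphi_n$. The initial conditions survive the passage to the limit because $\uvec_n(0)=\uvec_0$, $\varphi_n(0)=\varphi_0$ for every $n$ and both components converge in $C^0([0,T];G_{div})$ and $C^0([0,T];V)$, respectively. Therefore $[\bar\uvec,\bar\varphi]=\mathcal S(\bar\vvec)$.

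Finally, since the cost functional $\mathcal J$ in (\ref{costfunct}) is a sum of squared norms in $L^2$-spaces plus affine terms, it is convex and continuous on $[L^2(Q)^2\times L^2(Q)\times L^2(\Omega)^2\times L^2(\Omega)]\times \mathcal V$, hence weakly sequentially lower semicontinuous. Combining the strong convergences of $\uvec_n, \varphi_n$ (which handle the first four terms of $\mathcal J$) with the weak lower semicontinuity of $\gamma\Vert\cdot\Vert_{L^2(Q)^2}^2/2$ for the control cost, we get
\begin{equation*}
\mathcal J(\bar\uvec,\bar\varphi,\bar\vvec)\,\le\,\liminf_{n\to\infty}\mathcal J(\uvec_n,\varphi_n,\vvec_n)\,=\,d.
\end{equation*}
Because $\bar\vvec\in\mathcal V_{ad}$ and $[\bar\uvec,\bar\varphi]=\mathcal S(\bar\vvec)$, we have $f(\bar\vvec)\ge d$, so equality holds and $\bar\vvec$ is an optimal control, concluding the proof.
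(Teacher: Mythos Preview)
Your proof is correct and follows essentially the same direct-method approach as the paper: minimizing sequence, uniform state bounds from Theorem~\ref{thm1}, extraction of weak/weak-$*$ limits, Aubin--Lions compactness to upgrade to strong convergence (in particular uniform convergence of $\varphi_n$ on $\overline Q$, whence that of $\nu(\varphi_n)$ and $F'(\varphi_n)$), passage to the limit in the weak formulation, and weak lower semicontinuity of $\mathcal J$. The only cosmetic difference is that the paper invokes \cite[Lemma~1]{CFG} for the limit in the term $-2\,\mathrm{div}(\nu(\varphi_n)D\uvec_n)$, whereas you argue it directly from the uniform convergence of $\nu(\varphi_n)$ together with the weak convergence of $D\uvec_n$; both are equivalent here.
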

\begin{proof}
Take a minimizing sequence $\{\vvec_n\}\subset\mathcal{V}_{ad}$ for \textbf{(CP)}. Since
${\cal V}_{ad}$ is
bounded in ${\cal V}$, we may assume without loss of generality that
\begin{align}
&\vvec_n\to\overline{\vvec}\,\quad\mbox{weakly in }\,L^2(0,T;G_{div}){,}\nonumber
\end{align}
for some $\overline{\vvec}\in {\cal V}$. Since $\mathcal{V}_{ad}$ is convex and closed in $\mathcal{V}$,
and thus weakly sequentially closed, we have $\overline{\vvec}\in\mathcal{V}_{ad}$.

Moreover, since ${\cal S}$ is a locally bounded mapping from ${\cal V}$ into ${\cal H}$, we
may without loss of generality assume that the sequence $\,[\uvec_n,\varphi_n]={\cal S}(\vvec_n)$,
$n\in \mathbb{N}$, satisfies with appropriate limit points $[\overline{\uvec},\overline{\varphi}]$
the convergences
\begin{align}
&
\uvec_n\to\overline{\uvec},\,\quad\mbox{weakly$^\ast$ in $L^\infty(0,T;V_{div})$ and weakly in $H^1(0,T;G_{div})\cap L^2(0,T;H^2(\Omega)^2)$},
\label{wconv1}\\
&\varphi_n\to\overline{\varphi},\,\quad\mbox{weakly$^\ast$ in $L^\infty(0,T;H^2(\Omega))$ and in
$W^{1,\infty}(0,T;H)$, and weakly in $H^1(0,T;V)$}.
\label{wconv2}
\end{align}
In particular, it follows from the compactness of the embedding $H^1(0,T;V)\cap L^\infty(0,T;H^2(\Omega))\linebreak
\subset C^0([0,T];{H^s(\Omega))}$ for $0\le s<2$, that $\,\varphi_n\to \overline{\varphi}$ strongly
in $C^0(\overline{Q})$, whence we conclude that also
\begin{align}
&
\mu_n:=a\,\varphi_n-K\ast\varphi_n+F'(\varphi_n)\to \overline{\mu}:=a\,\overline{\varphi}
-K\ast\overline{\varphi}+F'(\overline{\varphi}) \quad\,\mbox{strongly in }\,C^0(\overline{Q}),\nonumber
\\
&
\nu(\varphi_n)\to \nu(\overline{\varphi}) \quad\,\mbox{strongly in }\,C^0(\overline{Q}).
\end{align}
We also have, by compact embedding,
\begin{align*}
&\uvec_{n}\to\overline{\uvec}\,\quad\mbox{strongly in }\,L^2(0,T;G_{div}),
\end{align*}
and it obviously holds
\begin{align}
&
\uvec_{n}(t)\to\overline{\uvec}(t) \,\quad\mbox{weakly in $G_{div}$, \,\,
for all $t\in [0,T]$.}
\label{wconv3}
\end{align}

Now, by passing to the limit in the weak formulation of problem (1.1)--(1.6), written for each solution $[\uvec_n,\varphi_n]={\cal S}(\vvec_n)$,
$n\in\mathbb{N}$, and using the above weak and strong convergences
(in particular, we can use \cite[Lemma 1]{CFG} in order to pass to the limit in the nonlinear term
$-\,2\,\mbox{div}(\nu(\varphi_n)D\uvec_n)$),
 it is not difficult to see that $[\overline{\uvec},\overline{\varphi}]$ satisfies the weak formulation corresponding to $\overline{\vvec}$. Hence, we have
$[\overline{\uvec},\overline{\varphi}]={\cal S}(\overline{\vvec})$, that is, the pair $([\overline{\uvec},\overline{\varphi}],
\overline{\vvec})$
is admissible for \textbf{(CP)}.

Finally, thanks to the weak sequential lower semicontinuity of $\mathcal{J}$ and to the weak convergences
\eqref{wconv1}, \eqref{wconv2}, \eqref{wconv3}, we infer that
$\overline{\vvec}\in\mathcal{V}_{ad}$,
together with the associated state $[\overline{\uvec},\overline{\varphi}]={\cal S}(\overline{\vvec})$,
is a solution to \textbf{(CP)}.
\end{proof}

\noindent
\textbf{The linearized system.}
Suppose that the general hypotheses {\bf (H1)}--{\bf (H4)} are fulfilled. We assume
that a fixed $\overline{\vvec}\in {\cal V}$ is given, that $[\overline{\uvec},\overline{\varphi}]
:={\cal S}(\overline{\vvec})\in {\cal H}$ is the
associated solution to the state system \eqref{sy1}-\eqref{ics}
according to Theorem 1, and that $\hvec\in\mathcal{V}$ is given.
In order to show that the control-to-state operator is differentiable at $\overline{\vvec}$, we first consider the following system, which is obtained by linearizing the state system \eqref{sy1}-\eqref{ics}
at $[\overline{\uvec},\overline{\varphi}]={\cal S}(\overline{\vvec})$:
\begin{align}
&\xivec_t\,-\,2\,\mbox{div}\big(\nu(\overline{\varphi})D\xivec\big)-2\,\mbox{div}\big(\nu'(\overline{\varphi})\,\eta\, D\overline{\uvec}\big)+(\overline{\uvec}\cdot\nabla)\xivec+(\xivec\cdot\nabla)\overline{\uvec}+\nabla\widetilde{\pi}\nonumber\\
&=\,\big(a\,\eta-{{K}}\ast\eta+F''(\overline{\varphi})\,\eta\big)\nabla\overline{\varphi}+\overline{\mu}\,\nabla\eta+\hvec \quad\,\mbox{in $Q$},\label{linsy1}\\
&\eta_t+\overline{\uvec}\cdot\nabla\eta\,=\,-\xivec\cdot\nabla\overline{\varphi}+\Delta\big(a\,\eta-{{K}}\ast\eta+F''(\overline{\varphi})\,\eta\big) \quad\,\mbox{in $Q$},\label{linsy2}\\
&\mbox{div}(\xivec)=0 \quad\,\mbox{in $Q$},\label{linsy3}\\
&\xivec=[0,0]^{T},\quad\, \frac{\partial}{\partial\nvec}\big(a\,\eta-{K}\ast\eta+F''(\overline{\varphi})\,\eta\big)=0
\,\quad\mbox{on $\Sigma$},
\label{linbcs}\\
&\xivec(0)=[0,0]^{T},\,\quad\eta(0)=0, \,\quad\mbox{in $\Omega$},\label{linics}
\end{align}
where
\begin{equation}
\overline{\mu}=a\overline{\varphi}-{{K}}\ast\overline{\varphi}+F'(\overline{\varphi}).
\end{equation}

We first prove that \eqref{linsy1}--\eqref{linics} has a unique weak solution.
\begin{prop}\label{linthm}
Suppose that the hypotheses {\bf(H1)}--{\bf (H4)} are satisfied. Then problem \eqref{linsy1}--\eqref{linics}
has for every $\hvec\in\mathcal{V}$
a unique weak solution $\,[\xivec,\eta]\,$ such that
\begin{align}
&\xivec\in
H^1(0,T;(V_{div})')
\cap C^0([0,T];G_{div})\cap L^2(0,T;V_{div}),
\nonumber\\
&\eta\in H^1(0,T;V')\cap C^0([0,T];H)\cap L^2(0,T;V).\label{reglin}
\end{align}
\end{prop}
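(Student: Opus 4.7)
The strategy is a standard Faedo--Galerkin scheme based on the spectral bases $\{\wvec_j\}$ of the Stokes operator $A$ and $\{w_j\}$ of the Neumann--Laplace operator $B$ introduced in Section 2, supplemented by energy estimates. Since system \eqref{linsy1}--\eqref{linics} is linear in the unknowns $(\xivec,\eta)$ and its coefficients involve only the fixed reference trajectory $[\overline{\uvec},\overline{\varphi}]={\cal S}(\overline{\vvec})$, whose regularity is controlled via Theorem \ref{thm1}, the discretized system reduces to a linear ODE with integrable-in-time coefficients and thus admits a unique global solution on $[0,T]$. Uniqueness of the limit problem follows automatically, by linearity, from the same a priori estimates applied to the difference of two solutions corresponding to $\hvec=0$, so existence and uniqueness need not be treated separately.

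For the uniform bounds I would first test the discretized momentum equation by $\xivec_n$: the convective term $b(\overline{\uvec},\xivec_n,\xivec_n)$ vanishes by antisymmetry, while the remaining terms $b(\xivec_n,\overline{\uvec},\xivec_n)$, $(\nu'(\overline{\varphi})\,\eta_n\,D\overline{\uvec},D\xivec_n)$, $((a\eta_n-K\ast\eta_n+F''(\overline{\varphi})\eta_n)\nabla\overline{\varphi},\xivec_n)$, $(\overline{\mu}\,\nabla\eta_n,\xivec_n)$ and $(\hvec,\xivec_n)$ are absorbed by H\"older, Young's convolution inequality, \eqref{GN} and \eqref{Agmon}, combined with the uniform bound \eqref{bound1} and the lower bound \eqref{nu} on $\nu$. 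In parallel I would introduce $\widetilde{\mu}_n:=a\eta_n-K\ast\eta_n+F''(\overline{\varphi})\eta_n$ and test the phase equation by $\widetilde{\mu}_n$: using \eqref{F1}, the leading contribution $(\eta_{n,t},\widetilde{\mu}_n)$ yields $\tfrac{1}{2}\tfrac{d}{dt}\int_\Omega(a+F''(\overline{\varphi}))\eta_n^2\,dx$, which is coercive in $\|\eta_n\|^2$, up to a remainder $\int_\Omega F'''(\overline{\varphi})\overline{\varphi}_t\eta_n^2\,dx$ harmlessly controlled by \eqref{regphi}, whereas integration by parts of the $\Delta\widetilde{\mu}_n$ term produces the dissipative $\|\nabla\widetilde{\mu}_n\|^2$. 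The coupling term $(\xivec_n\cdot\nabla\overline{\varphi},\widetilde{\mu}_n)$ is handled via H\"older and \eqref{GN} thanks to the $H^2$-control on $\overline{\varphi}$. Summing the two identities and applying Gronwall's lemma then yields uniform bounds on $\xivec_n$ in $C^0([0,T];G_{div})\cap L^2(0,T;V_{div})$ and on $\eta_n$ in $C^0([0,T];H)$, together with $\nabla\widetilde{\mu}_n\in L^2(0,T;H)$. To upgrade to $\eta_n\in L^2(0,T;V)$ I would reproduce the elementary gradient argument at the end of the proof of Lemma \ref{stabest2}: taking the gradient of the definition of $\widetilde{\mu}_n$ and invoking \eqref{F1} together with hypothesis \textbf{(H4)} gives $\|\nabla\eta_n\|^2\le C(\|\nabla\widetilde{\mu}_n\|^2+\|\eta_n\|^2)$. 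The bounds $\xivec_{n,t}\in L^2(0,T;(V_{div})')$ and $\eta_{n,t}\in L^2(0,T;V')$ are then read off by comparison directly from the equations.

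Passage to the limit is then standard: weak and weak-$\ast$ compactness yield a limit couple $(\xivec,\eta)$ with the claimed regularity \eqref{reglin}, and the Aubin--Lions lemma provides strong convergence of $\eta_n$ in $L^2(0,T;H)$ (and of $\xivec_n$ in $L^2(0,T;G_{div})$), which is more than enough to identify all products involving the fixed coefficients $\overline{\uvec}$, $\overline{\varphi}$, $\overline{\mu}$; no nonlinearity in $(\xivec,\eta)$ has to be handled. The main technical point is the interplay between the coupling term $\xivec\cdot\nabla\overline{\varphi}$ in the $\eta$-equation and the Cahn--Hilliard structure driven by $\widetilde{\mu}$: it is precisely the $H^2$-regularity of $\overline{\varphi}$ furnished by Theorem \ref{thm1} (and hence the two-dimensional setting) that allows $\|\nabla\overline{\varphi}\|_{L^4(\Omega)^2}$ to be used when testing by $\widetilde{\mu}_n$, and the $L^\infty$-control of $\overline{\varphi}$ and $\overline{\mu}$ in the velocity estimates; without this the coupling between $\xivec$ and $\eta$ could not be closed on the level of energy inequalities.
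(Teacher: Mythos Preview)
Your overall scheme --- Faedo--Galerkin approximation in the Stokes and Neumann eigenbases, uniform energy estimates, comparison for the time derivatives, Aubin--Lions compactness, and uniqueness from linearity --- is exactly what the paper does. The genuine difference lies in how you propose to estimate the $\eta$-equation: you test by the auxiliary quantity $\widetilde{\mu}_n=a\,\eta_n-K\ast\eta_n+F''(\overline{\varphi})\,\eta_n$, obtain $\|\nabla\widetilde{\mu}_n\|^2$ as dissipation, and then recover $\|\nabla\eta_n\|$ by the gradient argument of Lemma~\ref{stabest2}. The paper instead tests the $\eta$-equation directly by $\eta_n$: after expanding $\nabla\widetilde{\mu}_n$, the leading contribution is $\int_\Omega (a+F''(\overline{\varphi}))|\nabla\eta_n|^2\,dx\ge\hat c_1\|\nabla\eta_n\|^2$ by \eqref{F1}, so the $L^2(0,T;V)$-bound on $\eta_n$ is obtained in one step, with no detour through $\widetilde{\mu}_n$.

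Beyond being more economical, the paper's route avoids two gaps in your sketch. First, with the single-equation Galerkin formulation you describe (only $\eta_n$ is discretized, and $\widetilde{\mu}_n$ is \emph{defined} from it), the function $\widetilde{\mu}_n=a\,\eta_n-K\ast\eta_n+F''(\overline{\varphi})\,\eta_n$ does not lie in $\mathrm{span}\{w_1,\dots,w_n\}$, so it is not a legitimate test function for the discretized equation; you would have to switch to a two-field scheme approximating $\eta$ and $\widetilde{\mu}$ separately. Second, in computing $(\eta_{n,t},\widetilde{\mu}_n)$ you retain only the $(a+F''(\overline{\varphi}))$-part and the $F'''(\overline{\varphi})\overline{\varphi}_t$ remainder, but you omit $-(\eta_{n,t},K\ast\eta_n)=-\tfrac12\tfrac{d}{dt}(\eta_n,K\ast\eta_n)$. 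This term must enter the stored energy, and the resulting functional $\int_\Omega(a+F''(\overline{\varphi}))\eta_n^2\,dx-(\eta_n,K\ast\eta_n)$ is \emph{not} coercive in $\|\eta_n\|^2$ under \textbf{(H1)}--\textbf{(H4)} alone (only $a+F''\ge\hat c_1$ is assumed, and $|(\eta_n,K\ast\eta_n)|$ can be of the same order). Both issues are repairable --- a two-field Galerkin scheme and an exponential-weight/Gronwall shift, respectively --- but they are real gaps as written, and testing by $\eta_n$ as in the paper sidesteps them entirely.
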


\begin{proof}
We will make use of a Faedo-Galerkin approximating scheme. Following the lines of \cite{CFG},
we introduce the family $\{\wvec_j\}_{j\in\mathbb{N}}$ of the eigenfunctions to the Stokes operator $A$ as a
Galerkin basis in $V_{div}$
and the family $\{\psi_j\}_{j\in\mathbb{N}}$ of the eigenfunctions to
 the Neumann operator $B:=-\Delta+I$ as a Galerkin basis in $V$.
 Both these eigenfunction families $\{\wvec_j\}_{j\in\mathbb{N}}$ and
$\{\psi_j\}_{j\in\mathbb{N}}$ are assumed to be suitably ordered and normalized.

Moreover, recall that, since $\wvec_j\in D(A)$, we have ${\rm div}(\wvec_j)=0$.
Then we look for two functions of the form
\begin{align}
&\xivec_n(t):=\sum_{j=1}^n a^{(n)}_j(t)\wvec_j\,,\qquad\eta_n(t):=\sum_{j=1}^n
b^{(n)}_j(t) \psi_j\,,\nonumber
\end{align}
that solve the following approximating problem:
\begin{align}
&\langle \partial_t\xivec_n(t),\wvec_i\rangle_{V_{div}}\,+\,2\,\big(\nu(\overline{\varphi}(t))
\,D\xivec_n(t),D\wvec_i\big)
\,+\,
2\,\big(\nu'(\overline{\varphi}(t))\,\eta_n(t)\, D\overline{\uvec}(t),D\wvec_i\big)
\nonumber\\
&+\,b(\overline{\uvec}(t),\xivec_n(t),\wvec_i)\,+\,b(\xivec_n(t),\overline{\uvec}(t),\wvec_i)\nonumber\\
&=\,\big((a\,\eta_n(t)-{{K}}\ast\eta_n(t)\,+\,F''(\overline{\varphi}(t))\,\eta_n(t))\,\nabla\overline{\varphi}(t),\wvec_i\big)
+(\overline{\mu}(t)\,\nabla\eta_n(t),\wvec_i)\,+\,
(\hvec(t),\wvec)
\,,\label{FaGa1}\\[1mm]
&\langle \partial_t\eta_{n}(t),\psi_i\rangle_V
\,=\,-\big(\nabla(a\,\eta_n-{{K}}\ast\eta_n+F''(\overline{\varphi})\,\eta_n)(t),\nabla\psi_i\big)
+(\overline{\uvec}(t)\,\eta_n(t),\nabla\psi_i)\nonumber\\
&\hspace*{30mm}+(\xivec_n(t)\, \overline{\varphi}(t),\nabla\psi_i),\label{FaGa2}\\
&\xivec_n(0)=
[0,0]^T,
\,\quad\eta_n(0)=0,\label{FaGa3}
\end{align}
for $i=1,\dots,n$, and for almost every $t\in (0,T)$. Apparently, this is nothing but
a Cauchy problem for a system of $2n$ linear ordinary differential equations in the $2n$
unknowns $a^{(n)}_i$, $b^{(n)}_i$,
in which, owing to the regularity properties of $[\overline{\uvec},\overline{\varphi}]$, all of
the coefficient functions belong to $\,L^2(0,T)$.
Thanks to Carath\'{e}odory's theorem,
we can conclude that this problem enjoys
a unique solution $\boldsymbol{a}^{(n)}:=(a^{(n)}_1,\cdots,a^{(n)}_n)^T$,
$\boldsymbol{b}^{(n)}:=(b^{(n)}_1,\cdots,b^{(n)}_n)^T$
such that $\boldsymbol{a}^{(n)},\boldsymbol{b}^{(n)}\in H^1(0,T;\mathbb{R}^n)$.

We now aim to derive a priori estimates for $\xivec_n$ and $\eta_n$ that are uniform in $n\in\mathbb{N}$. For
the sake of keeping the exposition at a reasonable length, we will always omit the argument
$t$. To begin with, let us
multiply \eqref{FaGa1} by $a^{(n)}_i$, \eqref{FaGa2} by $b^{(n)}_i$,
sum over $i=1,\cdots,n$, and add the resulting identities.  We then obtain, almost
everywhere in $(0,T)$,
\begin{align}
&\frac{1}{2}\,\frac{d}{dt}\,\big(\Vert\xivec_n\Vert^2+\Vert\eta_n\Vert^2\big)\,+\,
2\,\big(\nu(\overline{\varphi})\,D\xivec_n,D\xivec_n\big)\,+\,
\big((a+F''(\overline{\varphi}))\,\nabla\eta_n,\nabla\eta_n\big) \nonumber
\\
&=\,-b(\xivec_n,\overline{\uvec},\xivec_n)\,-\,2\,\big(\nu'(\overline{\varphi})\,\eta_n\,
D\overline{\uvec},D\xivec_n\big)\,+\,\big((a\,\eta_n-{{K}}\ast\eta_n+
F''(\overline{\varphi})\,\eta_n)\,
\nabla\overline{\varphi},\xivec_n\big)\nonumber\\
&\quad\,\,+(\overline{\mu}\,\nabla\eta_n,\xivec_n)\,+\,
(\hvec,\xivec_n)
-\big(\eta_n\nabla a- \nabla {{K}}\ast\eta_n,\nabla\eta_n\big)-(\eta_n \,F'''(\overline{\varphi})\,\nabla\overline{\varphi},\nabla\eta_n\big)
\nonumber\\
&\quad\,\,
+(\overline{\uvec}\,\eta_n,\nabla\eta_n)+(\xivec_n\,\overline{\varphi},\nabla\eta_n).\label{diffid2}
\end{align}

Let us now estimate the terms on the right-hand side of this equation individually.
In the remainder of this proof, we use the following abbreviating notation:
the letter $\,C\,$ will stand for positive constants that depend only on the global data of  the system (1.1)--(1.6), on $\overline{\vvec}$,  and on
$[\overline{\uvec},\overline{\varphi}]$, but not on $n\in\mathbb{N}$;
moreover, by $C_\sigma$ we denote constants that in addition depend on the quantities
indicated
by the index $\sigma$, but not on $n\in\mathbb{N}$. Both $C$ and $C_\sigma$ may change
 within formulas and even
within lines.

We have, using H\"older's inequality, the elementary Young's inequality, and the global
bounds (\ref{bound1}) as main tools,
the following series of estimates:
\begin{align}
&
|b(\xivec_n,\overline{\uvec},\xivec_n)|
\,\leq\,\Vert\xivec_n\Vert_{L^4(\Omega)^2}\,
\Vert\nabla\overline{\uvec}\Vert_{L^4(\Omega)^{2\times 2}}\,\Vert\xivec_n\Vert
\,\le\,
C\,\Vert\nabla\xivec_n\Vert\,\Vert\overline{\uvec}\Vert_{H^2(\Omega)^2}\,\Vert\xivec_n\Vert
\nonumber
\\[1mm]
&
\leq\,\epsilon\,\Vert\nabla\xivec_n\Vert^2+C_\epsilon\,\Vert\overline{\uvec}\Vert_{H^2(\Omega)^2}^2\,\Vert\xivec_n\Vert^2,
\label{est25}
\\[4mm]
&
\big|2\,\big(\nu'(\overline{\varphi})\,\eta_n \,D\overline{\uvec},D\xivec_n\big)\big|\,
\leq\,C\,\Vert\eta_n\Vert_{L^4(\Omega)}\,\Vert D\overline{\uvec}\Vert_{L^4(\Omega)^{2\times 2}}\,
\Vert\nabla\xivec_n\Vert
\nonumber
\\[1mm]
&
\leq\,\epsilon\,\Vert\nabla\xivec_n\Vert^2\,+\,C_\epsilon\,\big(\Vert\eta_n\Vert^2
+\Vert\eta_n\Vert\,\Vert\nabla\eta_n\Vert\big)\,\Vert D\overline{\uvec}\Vert_{L^4(\Omega)^{2\times 2}}^2
\nonumber
\\[1mm]
&
\leq\,\epsilon\,\Vert\nabla\xivec_n\Vert^2\,+\,C_\epsilon\,\Vert\overline{\uvec}\Vert_{H^2(\Omega)^2}^2\,\Vert\eta_n\Vert^2\,
+\,\epsilon'\,\Vert\nabla\eta_n\Vert^2\,+\,C_{\epsilon,\epsilon'}\,\Vert\nabla\overline{\uvec}\Vert^2
\,\Vert\overline{\uvec}\Vert_{H^2(\Omega)^2}^2\,
\Vert\eta_n\Vert^2
\nonumber
\\[1mm]
&
\leq\,\epsilon\,\Vert\nabla\xivec_n\Vert^2\,+\,\epsilon'\,\Vert\nabla\eta_n\Vert^2
\,+\,C_{\epsilon,\epsilon'}\,\Vert\overline{\uvec}\Vert_{H^2(\Omega)^2}^2\,\Vert\eta_n\Vert^2,
\\[4mm]
&
\big|\big((a\,\eta_n-K\ast\eta_n+F''(\overline{\varphi})\,\eta_n)\,\nabla\overline{\varphi},\xivec_n\big)\big|
\,\leq\,C\,\Vert\eta_n\Vert\,\Vert\overline{\varphi}\Vert_{H^2(\Omega)}\,\Vert\xivec_n\Vert_{L^4(\Omega)^2}\nonumber
\\[1mm]
&
\leq\,C\,\Vert\eta_n\Vert\,\Vert\nabla\xivec_n\Vert\, \leq\,\epsilon\,\Vert\nabla\xivec_n\Vert^2\,+\,C_{\epsilon}\,\Vert\eta_n\Vert^2,
\\[4mm]
&
|(\overline{\mu}\,\nabla\eta_n,\xivec_n)|\,=\,|(\eta_n\,\nabla\overline{\mu},\xivec_n)|
\,\leq\,\Vert\nabla\overline{\mu}\Vert_{L^4(\Omega)^{2\times 2}}\,\Vert\eta_n\Vert\,\Vert
\xivec_n\Vert_{L^4(\Omega)^2}\,\leq\,C\,\Vert\nabla\xivec_n\Vert\,\Vert\eta_n\Vert
\nonumber
\\[1mm]
&
\leq\,\epsilon\,\Vert\nabla\xivec_n\Vert^2\,+\,C_\epsilon\,\Vert\eta_n\Vert^2,
\\[4mm]
&|(\hvec,\xivec_n)|
\,\leq \,C\,\Vert\xivec_n\Vert^2\,+\,C\,\Vert\hvec\Vert_\mathcal{V}^2,
\\[4mm]
&\big|\big(\eta_n\nabla a- \nabla K\ast\eta_n,\nabla\eta_n\big)\big|
\,\leq\, C\,\Vert\eta_n\Vert\,\Vert\nabla\eta_n\Vert\,\leq\,\epsilon'\,\Vert\nabla\eta_n\Vert^2\,+\,
C_{\epsilon'}\,\Vert\eta_n\Vert^2.
\end{align}
Moreover, also employing the Gagliardo-Nirenberg inequality (\ref{GN}), we find that
\begin{align}
&
|(\eta_n \,F'''(\overline{\varphi})\,\nabla\overline{\varphi},\nabla\eta_n\big)|
\,\leq\,C\,\Vert\eta_n\Vert_{L^4(\Omega)}\,\Vert\nabla\overline{\varphi}\Vert_{L^4(\Omega)^2}\,
\Vert\nabla\eta_n\Vert\nonumber
\\[1mm]
&
\leq\,C\,(\Vert\eta_n\Vert\,+\,\Vert\eta_n\Vert^{1/2}\,\Vert\nabla\eta_n\Vert^{1/2}\big)
\,\Vert\nabla\eta_n\Vert\,\leq\,\epsilon'\,\Vert\nabla\eta_n\Vert^2\,+\,C_{\epsilon'}
\,\Vert\eta_n\Vert^2,
\label{est50}\\[4mm]
&
|(\overline{\uvec}\,\eta_n,\nabla\eta_n)|
\,\leq\,\Vert\overline{\uvec}\Vert_{L^4(\Omega)^2}\,\Vert\eta_n\Vert_{L^4(\Omega)}\,
\Vert\nabla\eta_n\Vert\,\leq\,C\,\big(\Vert\eta_n\Vert\,+\,\Vert\eta_n\Vert^{1/2}\,
\Vert\nabla\eta_n\Vert^{1/2}\big)\,\Vert\nabla\eta_n\Vert\nonumber
\\[1mm]
&
\leq\,\epsilon'\,\Vert\nabla\eta_n\Vert^2\,+\,C_{\epsilon'}\,\Vert\eta_n\Vert^2,
\\[4mm]
&
|(\xivec_n\,\overline{\varphi},\nabla\eta_n)|
\,\leq\, C\,\Vert\overline{\varphi}\Vert_{H^2(\Omega)}\,\Vert\xivec_n\Vert\,\Vert\nabla\eta_n\Vert
\,\leq\,\epsilon'\,\Vert\nabla\eta_n\Vert^2\,+\,C_{\epsilon'}\,\Vert\xivec_n\Vert^2.
\label{est26}
\end{align}
Hence, inserting the estimates \eqref{est25}--\eqref{est26} in \eqref{diffid2},
applying the conditions
(\ref{F1}) in {\bf (H2)} and (\ref{nu}) in {\bf (H3)},
respectively,  to the second and third terms on the left-hand
side of \eqref{diffid2},
and choosing $\epsilon>0$ and $\epsilon'>0$ small enough, we obtain the estimate
\begin{align}
&\frac{d}{dt}\big(\Vert\xivec_n\Vert^2+\Vert\eta_n\Vert^2\big)+{{\hat\nu_1}}\,\Vert\nabla\xivec_n\Vert^2\,+\,
{{\hat c_1}}
\,\Vert\nabla\eta_n\Vert^2
\leq {{C}}\,\big(1+\Vert\overline{\uvec}\Vert_{H^2(\Omega)^2}^2\big)\big(\Vert\xivec_n\Vert^2
+\Vert\eta_n\Vert^2\big)
+ C\,\Vert\hvec\Vert_\mathcal{V}^2.\label{Gr}
\end{align}
Since, owing to (\ref{bound1}), the mapping $\,t\mapsto \|\overline{\uvec}(t)\|_{H^2(\Omega)^2}^2\,$ belongs to
$L^1(0,T)$, we may employ Gronwall's lemma to conclude the estimate
\begin{align}
&
\Vert\xivec_n\Vert_{L^\infty(0,T;G_{div})\cap L^2(0,T;V_{div})}\,\leq\,
C\,\Vert\hvec\Vert_\mathcal{V},
\,\quad\Vert\eta_n\Vert_{L^\infty(0,T;H)\cap L^2(0,T;V)}\,\leq\,C\,\Vert\hvec\Vert_\mathcal{V}
\,\quad\mbox{for all }\,n\in \mathbb{N}\,.
\label{est24}
\end{align}
Moreover, by comparison in \eqref{FaGa1}, \eqref{FaGa2}, we can easily deduce also the estimates for the
time derivatives
$\partial_t\xivec_n$ and $\partial_t\eta_n$. Indeed, we have
\begin{align}
&
\Vert\partial_t \xivec_n\Vert_{L^2(0,T;(V_{div})')}\,\leq\,C\,\Vert\hvec\Vert_\mathcal{V},
\,\quad\Vert\partial_t\eta_n\Vert_{L^2(0,T;V')}\,\leq\,C\,\Vert\hvec\Vert_\mathcal{V}
\,\quad\mbox{for all }\,n\in \mathbb{N}.
\label{est28}
\end{align}

From \eqref{est24}, \eqref{est28} we deduce the existence of two functions $\xivec$, $\eta$ satisfying \eqref{reglin}
and of two (not relabelled) subsequences $\{\xivec_n\}$, $\{\eta_n\}$ (and $\{\partial_t
\xivec_n\}$, $\{\partial_t\eta_n\}$)
converging weakly respectively to $\xivec$, $\eta$ (and to $\xivec_t$, $\eta_t$)
in the spaces where the bounds given by \eqref{est24} (and by \eqref{est28}) hold.

Then, by means of standard arguments, we can pass to the limit as $n\to\infty$ in \eqref{FaGa1}--\eqref{FaGa3} and prove
that $\xivec$, $\eta$ satisfy the weak formulation of problem \eqref{linsy1}--\eqref{linics}.
Notice that we actually have the regularity \eqref{reglin}, since the space $H^1(0,T;(V_{div})')
\cap L^2(0,T;V_{div})$ is continuously embedded in $C^0([0,T];G_{div})$; similarly
we obtain that $\eta\in C^0([0,T];H)$.

Finally, in order to prove that the solution $\xivec,\eta$ is unique, we can
test the difference between \eqref{linsy1}, \eqref{linsy2}, written for two solutions
$\xivec_1,\eta_1$ and $\xivec_2,\eta_2$, by $\xivec:=\xivec_1-\xivec_2$ and by $\eta:=\eta_1-\eta_2$,
respectively. Since the problem is linear, the argument is straightforward, and we may leave the details
to the reader.

\end{proof}

\begin{oss}{\upshape
By virtue of the weak sequential lower semicontinuity of norms, we can conclude from the estimates
(\ref{est24}) and (\ref{est28}) that the linear mapping $\,\hvec\mapsto
[\xivec^{\hvec}, \eta^{\hvec}]\,$, which assigns to each
$\hvec\in {\cal V}$ the corresponding unique weak solution pair $\,[\xivec^{\hvec},
\eta^{\hvec}]:=[\xivec,\eta]\,$
to the linearized system \eqref{linsy1}--\eqref{linics}, is continuous  as a mapping
between the spaces ${\cal V}$ and $\big[H^1(0,T;(V_{div})')\cap C^0([0,T];G_{div})\cap L^2(0,T;V_{div})\big]
\times \big[H^1(0,T;V')\cap C^0([0,T];H)\cap L^2(0,T;V) \big]$.
}
\end{oss}

\vspace{7mm}
\noindent
\textbf{Differentiability of the control-to-state operator.}
We now prove the following result:

\begin{thm}
\label{diffcontstat}
Suppose that the hypotheses {\bf (H1)}--{\bf (H4)} are fulfilled. Then the control-to-state operator
${\cal S}:{\cal V}\to {\cal H}$ is Fr\'echet differentiable on ${\cal V}$ when viewed as a mapping between the spaces  ${\cal V}$ and ${\cal Z}$, where

\begin{align}
&\mathcal{Z}:=\big[C([0,T];G_{div})\cap L^2(0,T;V_{div})\big]\times\big[C([0,T];H)\cap L^2(0,T;V)\big].\nonumber
\end{align}
Moreover, for any $\overline{\vvec}\in {\cal V}$ the Fr\'echet derivative $\,{\cal S}'(\overline{\vvec})
\in {\cal L}({\cal V},{\cal Z})\,$ is given by
$\,{\cal S}'(\overline{\vvec})\hvec=[\xivec^{\hvec},\eta^{\hvec}],$
for all $\,\hvec\in\mathcal{V}$,
where $[\xivec^{\hvec},\eta^{\hvec}]$ is the unique weak solution to the linearized system \eqref{linsy1}--\eqref{linics}
at $[\overline{\uvec},\overline{\varphi}]=S(\overline{\vvec})$ that corresponds to $\hvec\in\mathcal{V}$.
\end{thm}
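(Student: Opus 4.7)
The plan is to fix $\overline{\vvec}\in\mathcal{V}$ with associated state $[\overline{\uvec},\overline{\varphi}]=\mathcal{S}(\overline{\vvec})$, take $\hvec\in\mathcal{V}$ of small norm, put $[\uvec^{\hvec},\varphi^{\hvec}]=\mathcal{S}(\overline{\vvec}+\hvec)$ and $[\xivec^{\hvec},\eta^{\hvec}]=\mathcal{S}'(\overline{\vvec})\hvec$, and then show that the remainder
\[
\ptil:=\uvec^{\hvec}-\overline{\uvec}-\xivec^{\hvec},\qquad \qtil:=\varphi^{\hvec}-\overline{\varphi}-\eta^{\hvec},
\]
satisfies $\|[\ptil,\qtil]\|_{\mathcal{Z}}\le C\,\|\hvec\|_{\mathcal{V}}^{2}$ for all sufficiently small $\hvec$. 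Writing $\widetilde{\uvec}:=\uvec^{\hvec}-\overline{\uvec}$, $\widetilde{\varphi}:=\varphi^{\hvec}-\overline{\varphi}$, so that $\widetilde{\uvec}=\xivec^{\hvec}+\ptil$ and $\widetilde{\varphi}=\eta^{\hvec}+\qtil$, I subtract the state system at $\overline{\vvec}$ from that at $\overline{\vvec}+\hvec$ and then subtract the linearized system \eqref{linsy1}--\eqref{linics}. The left-hand sides combine to give the exact linearization of the state equations acting on $[\ptil,\qtil]$, while the right-hand sides assemble into purely quadratic remainder terms.

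The second step is to identify and control these remainders. Taylor's theorem applied to $F'$, $F''$ and $\nu$ yields bounds of the form
\[
\bigl|F'(\varphi^{\hvec})-F'(\overline{\varphi})-F''(\overline{\varphi})\,\widetilde{\varphi}\bigr|\le C\,\widetilde{\varphi}^{2},\quad \bigl|\nu(\varphi^{\hvec})-\nu(\overline{\varphi})-\nu'(\overline{\varphi})\,\widetilde{\varphi}\bigr|\le C\,\widetilde{\varphi}^{2},
\]
and analogously for $F''$; the truly genuine quadratic products coming out of the subtraction are $(\widetilde{\uvec}\cdot\nabla)\widetilde{\uvec}$, $\widetilde{\uvec}\cdot\nabla\widetilde{\varphi}$, together with cross products of $\widetilde{\varphi}$ with $\nabla\overline{\varphi}$ or $D\overline{\uvec}$ and analogous convolution terms. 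Lemma \ref{stabest2} (applied with $\vvec_{1}=\overline{\vvec}$ and $\vvec_{2}=\overline{\vvec}+\hvec$) provides the crucial bound
\[
\|\widetilde{\uvec}\|_{C^{0}([0,T];G_{div})\cap L^{2}(0,T;V_{div})}^{2}+\|\widetilde{\varphi}\|_{H^{1}(0,T;H)\cap C^{0}([0,T];V)\cap L^{2}(0,T;H^{2}(\Omega))}^{2}\le C\,\|\hvec\|_{\mathcal{V}}^{2},
\]
and this is precisely the norm needed to estimate each of the quadratic remainders in $L^{2}(0,T;(V_{div})')$ or $L^{2}(0,T;V')$ by $C\,\|\hvec\|_{\mathcal{V}}^{2}$, by invoking the two-dimensional estimates \eqref{GN}, \eqref{Agmon}, the admissibility/convolution bound \eqref{adm}, and the global regularity (\ref{bound1}) of the base state.

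The third step is to derive the desired bound on $[\ptil,\qtil]$ by the same type of energy argument used in the proof of Proposition \ref{linthm}. Testing the $\ptil$-equation by $\ptil$ in $G_{div}$ and the $\qtil$-equation by $\qtil$ in $H$, using hypotheses {\bf (H2)} and {\bf (H3)} to produce coercive terms $\hat\nu_{1}\|\nabla\ptil\|^{2}$ and $\hat c_{1}\|\nabla\qtil\|^{2}$, and estimating all linear-in-$[\ptil,\qtil]$ terms as in (\ref{est25})--(\ref{est26}), one obtains a differential inequality of the form
\begin{equation*}
\frac{d}{dt}\bigl(\|\ptil\|^{2}+\|\qtil\|^{2}\bigr)+\hat\nu_{1}\|\nabla\ptil\|^{2}+\hat c_{1}\|\nabla\qtil\|^{2}\le \gamma(t)\bigl(\|\ptil\|^{2}+\|\qtil\|^{2}\bigr)+R(t),
\end{equation*}
with $\gamma\in L^{1}(0,T)$ depending only on the base state and $\int_{0}^{T}R(t)\,dt\le C\,\|\hvec\|_{\mathcal{V}}^{4}$. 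Since $\ptil(0)=\mathbf 0$ and $\qtil(0)=0$, Gronwall's lemma gives $\|[\ptil,\qtil]\|_{\mathcal{Z}}\le C\,\|\hvec\|_{\mathcal{V}}^{2}$, which is the Fréchet differentiability claim.

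The main obstacle will be to show that each of the mixed quadratic terms can indeed be absorbed in $R(t)$ at order $\|\hvec\|_{\mathcal{V}}^{4}$. The most delicate contributions are the Korteweg-type term coming from the subtraction of $\mu\nabla\varphi$ (which produces a product of one factor from $[\widetilde{\uvec},\widetilde{\varphi}]$ with a derivative of another factor from $[\widetilde{\uvec},\widetilde{\varphi}]$) and the viscosity remainder $2\,\mbox{div}\bigl([\nu(\varphi^{\hvec})-\nu(\overline{\varphi})-\nu'(\overline{\varphi})\widetilde{\varphi}]\,D\overline{\uvec}\bigr)$; handling these requires integrating by parts so that derivatives fall onto $\overline{\uvec},\overline{\varphi}$ (whose $H^{2}$-regularity is controlled by (\ref{bound1})), and then using (\ref{GN}) and (\ref{Agmon}) together with the $C^{0}([0,T];V)\cap L^{2}(0,T;H^{2}(\Omega))$ control on $\widetilde{\varphi}$ afforded by Lemma \ref{stabest2}. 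Once these estimates are done, the Fréchet derivative identification $\mathcal{S}'(\overline{\vvec})\hvec=[\xivec^{\hvec},\eta^{\hvec}]$ and the membership $\mathcal{S}'(\overline{\vvec})\in\mathcal{L}(\mathcal{V},\mathcal{Z})$ follow directly from Proposition \ref{linthm} and the Remark after it.
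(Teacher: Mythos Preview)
Your proposal follows essentially the same approach as the paper's proof: define the remainder $[\pvec,q]=[\uvec^{\hvec}-\overline{\uvec}-\xivec^{\hvec},\varphi^{\hvec}-\overline{\varphi}-\eta^{\hvec}]$, derive the system it satisfies (linearized operator acting on $[\pvec,q]$ plus quadratic remainders in $[\widetilde{\uvec},\widetilde{\varphi}]$), test by $\pvec$ and $q$, estimate term by term using Lemma~\ref{stabest2} and the global bound \eqref{bound1}, and close via Gronwall. One small overclaim: you assert $\int_0^T R(t)\,dt\le C\|\hvec\|_{\mathcal V}^4$, but the viscosity cross term $2\int_\Omega[\nu(\varphi^{\hvec})-\nu(\overline{\varphi})]\,D\widetilde{\uvec}:D\pvec\,dx$ only yields, after Young and Gagliardo--Nirenberg, a contribution bounded by $C\|\widetilde{\varphi}\|_V^2\,\|\nabla\widetilde{\uvec}\|\,(\|\uvec^{\hvec}\|_{H^2(\Omega)^2}+\|\overline{\uvec}\|_{H^2(\Omega)^2})$, whose time integral is $O(\|\hvec\|_{\mathcal V}^3)$ because Lemma~\ref{stabest2} controls $\widetilde{\uvec}$ only in $L^2(0,T;V_{div})$, not in $L^2(0,T;H^2(\Omega)^2)$. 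The paper accordingly obtains $\|[\pvec,q]\|_{\mathcal Z}^2\le C\|\hvec\|_{\mathcal V}^3$, i.e.\ $\|[\pvec,q]\|_{\mathcal Z}\le C\|\hvec\|_{\mathcal V}^{3/2}$, which is still $o(\|\hvec\|_{\mathcal V})$ and hence suffices for Fr\'echet differentiability; your argument goes through with this adjustment. (Also, avoid the notation $\ptil,\qtil$ for the remainder, since the paper reserves it for the adjoint variables.)
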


\begin{proof}
Let $\overline{\vvec}\in {\cal V}$ be fixed and $[\overline{\uvec},\overline{\varphi}]={\cal S}(\overline{\vvec})$.
Recalling Remark 2, we first note that the linear mapping $\hvec\mapsto[\xivec^{\hvec},\eta^{\hvec}]$
belongs to $\mathcal{L}(\mathcal{V},\mathcal{Z})$.

Now let $\Lambda>0$ be fixed. In the following, we consider perturbations $\hvec \in {\cal V}$ such that
$\,\|\hvec\|_{\cal V}\,\le\,\Lambda$. For any such perturbation $\hvec$, we put
\begin{align}
&[\uvec^{\hvec},\varphi^{\hvec}]:=S(\overline{\vvec}+\hvec),\qquad
\pvec^{\hvec}:=\uvec^{\hvec}-\overline{\uvec}-\xivec^{\hvec},\qquad q^{\hvec}:=\varphi^{\hvec}-\overline{\varphi}-\eta^{\hvec}.\nonumber
\end{align}
Notice that we have the regularity
\begin{align}
&
\pvec^{\hvec}\in H^1(0,T;V_{div}')\cap C^0([0,T];G_{div})\cap L^2(0,T;V_{div}),
\nonumber
\\
&
q^{\hvec} \in H^1(0,T;V')\cap C^0([0,T];H)\cap L^2(0,T;V)\,.
\end{align}
By virtue of (\ref{bound1}) in Theorem 1 and of (\ref{stabi2}) in Lemma 2, there is a constant $C_1^*>0$,
which may depend on the data of the problem and on $\Lambda$, such that we have:
for every $\hvec\in {\cal V}$ with $\|\hvec\|_{\cal V}\le\Lambda$ it holds
\begin{align}
&
\left\|[\uvec^{\hvec},\varphi^{\hvec}]\right\|_{\cal H}\,\le\,C_1^*\,,\quad\,
\|\varphi^{\hvec}\|_{C^0(\overline{Q})}\,\le\,C_1^*\,,
\label{bound2}\\[2mm]
&
\Vert\uvec^{\hvec}-\overline{\uvec}\Vert_{{C^0([0,t];G_{div})}
\cap L^2(0,t;V_{div})}^2\,+\,\Vert\varphi^{\hvec}-\overline{\varphi}\Vert_{H^1(0,t;H)\cap C^0([0,t];V)
\cap L^2(0,t;H^2(\Omega))}^2
\le \,C_1^*\,\|\hvec\|_{{\cal V}}^2\nonumber
\\
&
\mbox{for every }\,t\in (0,T]\,.
\label{bound3}
\end{align}

Now, after some easy computations, we can see that $\pvec^{\hvec}, q^{\hvec}$ (which, for simplicity, shall henceforth be denoted by
$\pvec,q$) is a solution to the weak analogue of the following problem:
\begin{align}
&\pvec_t-2\,\mbox{div}\big(\nu(\overline{\varphi})D\pvec\big)
-2\,\mbox{div}\big((\nu(\varphi^{\hvec})-\nu(\overline{\varphi}))D(\uvec^{\hvec}-\overline{\uvec})\big)
-2\,\mbox{div}\big((\nu(\varphi^{\hvec})-\nu(\overline{\varphi})-\nu'(\overline{\varphi})\eta^{\hvec})D\overline{\uvec}\big)\nonumber\\
&
+(\pvec\cdot\nabla)\overline{\uvec}+(\overline{\uvec}\cdot\nabla)\pvec
+\big((\uvec^{\hvec}-\overline{\uvec})\cdot\nabla\big)(\uvec^{\hvec}-\overline{\uvec})+\nabla\pi^{\hvec}\nonumber\\
&=a\,(\varphi^{\hvec}-\overline{\varphi})\,\nabla(\varphi^{\hvec}-\overline{\varphi})
-\big({{K}}\ast(\varphi^{\hvec}-\overline{\varphi})\big)\,\nabla(\varphi^{\hvec}-\overline{\varphi})
+(a\,q-{{K}}\ast q)\,\nabla\overline{\varphi}\nonumber\\
&\quad +(a\,\overline{\varphi}-{K}\ast\overline{\varphi})\,\nabla q
+\big(F'(\varphi^{\hvec})-F'(\overline{\varphi})\big)\nabla(\varphi^{\hvec}-\overline{\varphi})
+F'(\overline{\varphi})\,\nabla q\nonumber
\\
&\quad +\big(F'(\varphi^{\hvec})-F'(\overline{\varphi})-F''(\overline{\varphi})
\,\eta^{\hvec}\big)\,\nabla\overline{\varphi}\,
\quad\mbox{in $\,Q$},\label{peq}\\[2mm]
&q_t\,+\,(\uvec^{\hvec}-\overline{\uvec})\cdot\nabla(\varphi^{\hvec}-\overline{\varphi})
+\pvec\cdot\nabla\overline{\varphi}+\overline{\uvec}\cdot\nabla q\nonumber
\,=\,\Delta\big(a\,q-{{K}}\ast q+F'(\varphi^{\hvec})-F'(\overline{\varphi})-F''(\overline{\varphi})\eta^{\hvec}\big)\nonumber\\
&\quad\mbox{in \,$Q$},\label{qeq}
\\[1mm]
&\mbox{div}(\pvec)=0\,\quad\mbox{in \,$Q$},\label{divp}\\[2mm]
&\pvec=[0,0]^{T},\qquad \frac{\partial}{\partial\nvec}\big(aq-{{K}}\ast q+F'(\varphi^{\hvec})-F'(\overline{\varphi})-F''(\overline{\varphi})\eta^{\hvec}\big)=0,\qquad\mbox{on }\Sigma,
\label{bcondpq}\\[1mm]
&\pvec(0)=[0,0]^{T},\quad q(0)=0, \,\quad\mbox{in $\Omega$.}\label{icspq}
\end{align}

\noindent
That is, $\pvec$ and $q$ solve the following variational problem (where we avoid to write the
argument $t$ of the involved functions):
\begin{align}
&\langle\pvec_t,\wvec\rangle_{{{V_{div}}}} \,+\,2\,
\big(\nu(\overline{\varphi})D\pvec,D\wvec\big)
\,+\,2\,\big((\nu(\varphi^{\hvec})-\nu(\overline{\varphi}))D(\uvec^{\hvec}-\overline{\uvec}),D\wvec\big)
\nonumber\\[1mm]
&+\,2\,\big((\nu(\varphi^{\hvec})-\nu(\overline{\varphi})-\nu'(\overline{\varphi})\eta^{\hvec})D\overline{\uvec},D\wvec\big)
+b(\pvec,\overline{\uvec},\wvec)+b(\overline{\uvec},\pvec,\wvec)\nonumber\\[1mm]
&
+b(\uvec^{\hvec}-\overline{\uvec},\uvec^{\hvec}-\overline{\uvec},\wvec)\nonumber\\[1mm]
&
=\,\big(a\,(\varphi^{\hvec}-\overline{\varphi})\nabla(\varphi^{\hvec}-\overline{\varphi}),\wvec\big)
-\big(\big({K}\ast(\varphi^{\hvec}-\overline{\varphi})\big)\nabla(\varphi^{\hvec}-\overline{\varphi}),\wvec\big)\nonumber\\[1mm]
&\quad\,+\big((a\,q-{{K}}\ast q)\nabla\overline{\varphi},\wvec\big)+\big((a\overline{\varphi}-K\ast\overline{\varphi})\nabla q,\wvec\big)
+\big(\big(F'(\varphi^{\hvec})-F'(\overline{\varphi})\big)\nabla(\varphi^{\hvec}-\overline{\varphi}),\wvec\big)\nonumber\\[1mm]
&\quad+\big(F'(\overline{\varphi})\nabla q,\wvec\big)
+\big(\big(F'(\varphi^{\hvec})-F'(\overline{\varphi})-F''(\overline{\varphi})\eta^{\hvec}\big)\nabla\overline{\varphi},\wvec\big)\,,\label{wfpeq}\\
&\langle q_t,\psi\rangle_{{{V}}}
+\big((\uvec^{\hvec}-\overline{\uvec})\cdot\nabla(\varphi^{\hvec}-\overline{\varphi}),\psi\big)
+\big(\pvec\cdot\nabla\overline{\varphi},\psi\big)+\big(\overline{\uvec}\cdot\nabla q,\psi\big)
\nonumber\\[1mm]
&=\,-\big(\nabla\big(a\,q-{{K}}\ast q+F'(\varphi^{\hvec})-F'(\overline{\varphi})-F''(\overline{\varphi})\eta^{\hvec}\big),\nabla\psi\big),\label{wfqeq}
\end{align}
for every $\wvec\in V_{div}$, every $\psi\in V$, and almost every $t\in(0,T)$.

We choose $\,\wvec=\pvec(t)\in V_{div}\,$ and $\,\psi=q(t)\in V\,$ as test functions in
(\ref{wfpeq}) and (\ref{wfqeq}), respectively, to obtain the
equations (where we will again always suppress the argument $t$ of the involved
functions)
\begin{align}
&\frac{1}{2}\,\frac{d}{dt}\,\Vert\pvec\Vert^2
\,+2\int_\Omega\nu(\overline{\varphi})D\pvec:D\pvec \,dx
\,+2\int_\Omega((\nu(\varphi^{\hvec})-\nu(\overline{\varphi}))\,D(\uvec^{\hvec}-\overline{\uvec}):D\pvec\,dx\nonumber\\[1mm]
&+\,2\int_\Omega\nu'(\overline{\varphi})\,q\, D\overline{\uvec}:D\pvec\,dx
\,+\int_\Omega\nu''(\sigma^{\hvec}_1)\,(\varphi^{\hvec}-\overline{\varphi})^2\,
D\overline{\uvec}:D\pvec\,dx
\,+\int_\Omega(\pvec\cdot\nabla)\overline{\uvec}\cdot\pvec\,dx
\nonumber\\[1mm]
&+\int_\Omega\big((\uvec^{\hvec}-\overline{\uvec})\cdot\nabla\big)(\uvec^{\hvec}-\overline{\uvec})\cdot\pvec\,dx
\nonumber\\[1mm]
&
=\int_\Omega a\,(\varphi^{\hvec}-\overline{\varphi})\nabla(\varphi^{\hvec}-\overline{\varphi})\cdot\pvec\,dx
-\int_\Omega\big(K\ast(\varphi^{\hvec}-\overline{\varphi})\big)\nabla(\varphi^{\hvec}-\overline{\varphi})\cdot\pvec\,dx
\nonumber\\[1mm]
&\quad\,\,+\int_{\Omega}(a\, q-{{K}}\ast q)\nabla\overline{\varphi}\cdot\pvec
\,dx \,+\int_\Omega(a\,\overline{\varphi}-{{K}}\ast\overline{\varphi})\nabla q\cdot\pvec\,dx
\nonumber\\[1mm]
&\quad\,\,+\int_\Omega\big(F'(\varphi^{\hvec})-F'(\overline{\varphi})\big)\nabla(\varphi^{\hvec}-\overline{\varphi})\cdot\pvec\,dx\,
+\int_\Omega F'(\overline{\varphi})\nabla q\cdot\pvec\,dx
\nonumber\\[1mm]
&\quad\,\,+\int_\Omega F''(\overline{\varphi}) q\nabla\overline{\varphi}\cdot\pvec\,dx
+\frac{1}{2}\int_\Omega F'''(\sigma_2^{\hvec})(\varphi^{\hvec}-\overline{\varphi})^2\,\nabla\overline{\varphi}\cdot\pvec\,dx\,,\label{pid}
\\[2mm]
&\frac{1}{2}\,\frac{d}{dt}\,\Vert q\Vert^2
\,+\int_\Omega\big((\uvec^{\hvec}-\overline{\uvec})\cdot\nabla(\varphi^{\hvec}-\overline{\varphi})\big)\,q\,dx\,
+\int_\Omega(\pvec\cdot\nabla\overline{\varphi})\,q\,dx
\nonumber\\[1mm]
&=-\int_\Omega\nabla q\cdot\nabla\big(a\,q-K\ast q+F'(\varphi^{\hvec})-F'(\overline{\varphi})-F''(\overline{\varphi})\eta^{\hvec}\big)\,dx\,.\label{qid}
\end{align}
In \eqref{pid}, we have used Taylor's formula
\begin{align}
&\nu(\varphi^{\hvec})=\nu(\overline{\varphi})+\nu'(\overline{\varphi})(\varphi^{\hvec}-\overline{\varphi})
+\frac{1}{2}\nu''(\sigma_1^{\hvec})(\varphi^{\hvec}-\overline{\varphi})^2,\nonumber\\
&F'(\varphi^{\hvec})=F'(\overline{\varphi})+F''(\overline{\varphi})
(\varphi^{\hvec}-\overline{\varphi})
+\frac{1}{2} F'''(\sigma_2^{\hvec})(\varphi^{\hvec}-\overline{\varphi})^2,\nonumber
\end{align}
where
$$\sigma_i^{\hvec}=\theta_i^{\hvec}\varphi^{\hvec}+(1-\theta_i^{\hvec})\overline{\varphi},
\quad \theta_i^{\hvec}=\theta_i^{\hvec}(x,t)\in (0,1), \quad\mbox{for \,$i=1,2$.}
$$
Moreover, in the integration by parts on the right-hand side of \eqref{qid} we employed
 the second boundary condition in \eqref{bcondpq},
which is a consequence of $\,\partial\mu^{\hvec}/\partial\nvec=
\partial\overline{\mu}/\partial\nvec=0\,$ on $\,\Sigma\,$ and
of \eqref{linbcs} (where $\mu^{\hvec}:=a\,\varphi^{\hvec}-{K}\ast\varphi^{\hvec}+F'(\varphi^{\hvec})$).

We now begin to estimate all the terms in \eqref{pid}.
In this process, we will make repeated use of the global estimates
(\ref{bound2}), (\ref{bound3}), and of the Gagliardo-Nirenberg inequality (\ref{GN}).
Again, we denote by $C$ positive constants that may depend on the data of the system,
but not on the choice of $\hvec\in {\cal V}$ with $\|\hvec\|_{\cal V}\le\Lambda$,
while $C_\sigma$ denotes a positive constant that also depends on the quantity
indicated by the index $\,\sigma$.
We have, with constants $\epsilon>0$ and $\epsilon'>0$ that will be fixed later, the following
series of estimates:
\begin{align}
&\Big|2\int_\Omega(\nu(\varphi^{\hvec})-\nu(\overline{\varphi}))D(\uvec^{\hvec}-\overline{\uvec})\!:\!D\pvec\,dx\Big|
\,=\,\Big|2\int_\Omega\nu'(\sigma_3^{\hvec})(\varphi^{\hvec}-\overline{\varphi})D(\uvec^{\hvec}-\overline{\uvec})\!:\!D\pvec\,dx\Big|\nonumber\\[1mm]
&
\quad\leq\,C\,\Vert\varphi^{\hvec}-\overline{\varphi}\Vert_{L^4(\Omega)}\,\Vert D(\uvec^{\hvec}-\overline{\uvec})\Vert_{L^4(\Omega)^{2\times 2}}\,
\Vert D\pvec\Vert\nonumber\\[1mm]
&\quad\leq\,\epsilon\,\Vert\nabla\pvec\Vert^2\,
+\,C_\epsilon\,\Vert\varphi^{\hvec}-\overline{\varphi}\Vert_V^2\,
\Vert\nabla(\uvec^{\hvec}-\overline{\uvec})\Vert\,\big(\Vert\uvec^{\hvec}\Vert_{H^2(\Omega)^2}+\Vert\overline{\uvec}\Vert_{H^2(\Omega)^2}\big)
\nonumber\\[1mm]
&
\quad\leq\,\epsilon\,\Vert\nabla\pvec\Vert^2\,
+\,C_\epsilon\,
\Vert\nabla(\uvec^{\hvec}-\overline{\uvec})\Vert\,\big(\Vert\uvec^{\hvec}\Vert_{H^2(\Omega)^2}+\Vert\overline{\uvec}\Vert_{H^2(\Omega)^2}\big)
\,\|\hvec\|_{\cal V}^2\,,
\label{est51}
\end{align}
as well as
\begin{align}
&
\Big|2\int_\Omega\nu'(\overline{\varphi})\,q\, D\overline{\uvec}\!:\!D\pvec\,dx\Big|
\,\leq\, C\,\Vert q\Vert_{L^4(\Omega)}\,
\Vert D\overline{\uvec}\Vert_{L^4(\Omega)^{2\times 2}}\,\Vert\nabla\pvec\Vert
\nonumber\\[1mm]
&
\quad\le\,
\epsilon\,\Vert\nabla\pvec\Vert^2
+C_{\epsilon}\,\Vert q\Vert\,\Vert q\Vert_V\,\Vert\nabla\overline{\uvec}\Vert\,\Vert\overline{\uvec}\Vert_{H^2(\Omega)^2}
\nonumber
\\[1mm]
&
\quad\leq\,\epsilon\,\Vert\nabla\pvec\Vert^2\,+\,\epsilon'\,\Vert\nabla q\Vert^2
\,+\,C_{\epsilon,\epsilon'}\,\big(1+\Vert\overline{\uvec}\Vert_{H^2(\Omega)^2}^2\big)\,\Vert q\Vert^2\,.
\label{est52}
\end{align}
Moreover, by similar reasoning,
\begin{align}
&
\Big|\int_\Omega\nu''(\sigma^{\hvec}_1)\,(\varphi^{\hvec}-\overline{\varphi})^2\,
D\overline{\uvec}\!:\!D\pvec\,dx\Big|
\,\leq\,C\,\Vert\varphi^{\hvec}-\overline{\varphi}\Vert_{L^8(\Omega)}^2\,
\Vert D\overline{\uvec}\Vert_{L^4(\Omega)^{2\times 2}}\,\Vert\nabla\pvec\Vert
\nonumber
\\[1mm]
&
\quad\leq\,\epsilon\,\Vert\nabla\pvec\Vert^2
\,+\, C_\epsilon\,\Vert\varphi^{\hvec}-\overline{\varphi}\Vert_V^4\,\Vert\overline{\uvec}\Vert_{H^2(\Omega)^2}^2
\,\leq\,\epsilon\,\Vert\nabla\pvec\Vert^2
\,+\, C_\epsilon\,\Vert\overline{\uvec}\Vert_{H^2(\Omega)^2}^2\,
\|\hvec\|_{\cal V}^4\,,
\label{est53}
\\[4mm]
&\Big|\int_\Omega(\pvec\cdot\nabla)\overline{\uvec}\cdot\pvec\,dx\Big|
\,\leq\,\Vert\pvec\Vert_{L^4(\Omega)^2}\,\Vert\nabla\overline{\uvec}\Vert_{L^4(\Omega)^{2\times 2}}\,\Vert\pvec\Vert\,\,\leq\,\epsilon\,\Vert\nabla\pvec\Vert^2\,+\,C_\epsilon\,
\Vert\overline{\uvec}\Vert_{H^2(\Omega)^2}^2\,\Vert\pvec\Vert^2\,,
\label{est29}
\\[4mm]
&\Big|\int_\Omega\big((\uvec^{\hvec}-\overline{\uvec})\cdot\nabla\big)(\uvec^{\hvec}-\overline{\uvec})\cdot\pvec\,dx\Big|
\,=\,\Big|\int_\Omega\big((\uvec^{\hvec}-\overline{\uvec})\cdot\nabla\big)\pvec\cdot
(\uvec^{\hvec}-\overline{\uvec})\,dx\Big|\nonumber
\\[1mm]
&\quad\leq\,\epsilon\,\Vert\nabla\pvec\Vert^2\,+\,C_\epsilon\,\Vert\uvec^{\hvec}-\overline{\uvec}\Vert_{L^4(\Omega)^2}^4
\,\le\,\epsilon\,\Vert\nabla\pvec\Vert^2\,+\,C_\epsilon\,\Vert\uvec^{\hvec}-\overline{\uvec}\Vert^2
\,\Vert\nabla(\uvec^{\hvec}-\overline{\uvec})\Vert^2
\nonumber\\[1mm]
&
\quad\leq\,\epsilon\,\Vert\nabla\pvec\Vert^2\,+\,C_\epsilon\,
\Vert\nabla(\uvec^{\hvec}-\overline{\uvec})\Vert^2\,\|\hvec\|_{\cal V}^2\,,
\label{est30}\\[4mm]
&\int_\Omega a\,(\varphi^{\hvec}-\overline{\varphi})\nabla(\varphi^{\hvec}-\overline{\varphi})
\cdot\pvec\,dx \,=\,-\int_\Omega \frac{(\varphi^{\hvec}-\overline{\varphi})^2}{2}\,\nabla a\cdot\pvec\,dx\nonumber
\\[1mm]
&\quad\leq C\,\Vert\pvec\Vert\,\Vert\varphi^{\hvec}-\overline{\varphi}\Vert_{L^4(\Omega)}^2
\,\leq\, \Vert\pvec\Vert^2\,+\,C\,\Vert\varphi^{\hvec}-\overline{\varphi}\Vert_V^4
\,\leq\, \Vert\pvec\Vert^2\,+\,C\,\|\hvec\|_{\cal V}^4\,,
\label{est31}\\[4mm]
&
-\int_\Omega\big({K}\ast(\varphi^{\hvec}-\overline{\varphi})\big)\nabla(\varphi^{\hvec}-\overline{\varphi})\cdot\pvec\,dx
\,=\,\int_\Omega\big(\nabla {{K}}\ast(\varphi^{\hvec}-\overline{\varphi})\big)(\varphi^{\hvec}-\overline{\varphi})\cdot\pvec\,dx\nonumber
\\[1mm]
&\quad\leq\, C\,\Vert\varphi^{\hvec}-\overline{\varphi}\Vert_{L^4(\Omega)}\,
\Vert\varphi^{\hvec}-\overline{\varphi}\Vert\,\Vert\pvec\Vert_{L^4(\Omega)^2}
\,\leq\,\epsilon\,\Vert\nabla\pvec\Vert^2\,+\,C_{\epsilon}\,
\Vert\varphi^{\hvec}-\overline{\varphi}\Vert^2\,
\Vert\varphi^{\hvec}-\overline{\varphi}\Vert_V^2\nonumber
\\[1mm]
&
\quad\leq\,\epsilon\,\Vert\nabla\pvec\Vert^2\,+\,C_{\epsilon}\,
\|\hvec\|_{\cal V}^4\,,
\label{est32}
\\[4mm]
&
\int_{\Omega}(a\, q-{{K}}\ast q)\,\nabla\overline{\varphi}\cdot\pvec\,dx
\,\leq\, C\,\Vert q\Vert\,\Vert\nabla\overline{\varphi}\Vert_{L^4(\Omega)}\,\Vert\pvec\Vert_{L^4(\Omega)^2}
\,\leq\,\epsilon\,\Vert\nabla\pvec\Vert^2\,+\,C_{\epsilon}\,\Vert q\Vert^2\,,
\label{est33}
\\[4mm]
&
\int_\Omega(a\,\overline{\varphi}-{{K}}\ast\overline{\varphi})\,\nabla q\cdot\pvec\,dx
\,\leq\, C\, \Vert\overline{\varphi}\Vert_{H^2(\Omega)}\,\Vert\nabla q\Vert\,\Vert\pvec\Vert
\,\leq\,\epsilon'\,\Vert\nabla q\Vert^2\,+\,C_{\epsilon'}\,\Vert\pvec\Vert^2\,,
\label{est34}
\\[4mm]
&
\int_\Omega\big(F'(\varphi^{\hvec})-F'(\overline{\varphi})\big)\nabla(\varphi^{\hvec}-\overline{\varphi})\cdot\pvec\,dx
\,=\,\int_\Omega F''(\sigma_4^{\hvec})\,(\varphi^{\hvec}-\overline{\varphi})\nabla(\varphi^{\hvec}-\overline{\varphi})\cdot\pvec\,dx
\nonumber
\\[1mm]
&\quad\leq\,C\,\Vert\varphi^{\hvec}-\overline{\varphi}\Vert_{L^4(\Omega)}\,\Vert\nabla(\varphi^{\hvec}-\overline{\varphi})\Vert\,
\Vert\pvec\Vert_{L^4(\Omega)^2}
\,\leq\,\epsilon\,\Vert\nabla\pvec\Vert^2\,+\,C_{\epsilon}\,
\Vert\varphi^{\hvec}-\overline{\varphi}\Vert_V^4\nonumber\\[1mm]
&
\quad\le\,\epsilon\,\|\nabla\pvec\|^2\,+\,C_{\epsilon}\,\|\hvec\|_{\cal V}^4\,,
\label{est35}
\\[4mm]
&
\int_\Omega F'(\overline{\varphi})\nabla q\cdot\pvec\,dx
\,\leq\,C\,\Vert\nabla q\Vert\,\Vert\pvec\Vert\,\leq\,\epsilon'\,\Vert\nabla q\Vert^2\,+\,C_{\epsilon'}\,\Vert\pvec\Vert^2\,,\label{est36}
\\[4mm]
&\int_\Omega F''(\overline{\varphi})\, q\nabla\overline{\varphi}\cdot\pvec\,dx
\,\leq\, C\,\Vert q\Vert\,\Vert\nabla\overline{\varphi}\Vert_{L^4(\Omega)^2}\,
\Vert\pvec\Vert_{L^4(\Omega)^2}\,\leq\,\epsilon\,\Vert\nabla\pvec\Vert^2
\,+\,C_\epsilon\,\Vert q\Vert^2\,,\label{est37}
\\[4mm]
&\frac{1}{2}\int_\Omega F'''(\sigma_4^{\hvec})\,(\varphi^{\hvec}-\overline{\varphi})^2\,\nabla\overline{\varphi}\cdot\pvec\,dx
\,\leq\, C\,\Vert\varphi^{\hvec}-\overline{\varphi}\Vert_{L^4(\Omega)}^2\,
\Vert\nabla\overline{\varphi}\Vert_{L^4(\Omega)^2}\,
\Vert\pvec\Vert_{L^4(\Omega)^2}\nonumber
\\[1mm]
&\quad\leq\,\epsilon\,\Vert\nabla\pvec\Vert^2\,+\,C_\epsilon\,
\Vert\varphi^{\hvec}-\overline{\varphi}\Vert_V^4
\,\le\,\epsilon\,\|\nabla\pvec\|^2\,+\,C_\epsilon\,\|\hvec\|_{\cal V}^4\,.
\label{est38}
\end{align}

\noindent Observe that in the derivation of \eqref{est30}, \eqref{est31}, and \eqref{est32}, we have used \eqref{divp} and the first boundary condition
in \eqref{bcondpq}, while in
\eqref{est51}, \eqref{est35}, and \eqref{est38}, we have set $\,\sigma_j^{\hvec}:=\theta_j^{\hvec}\varphi^{\hvec}+(1-\theta_j^{\hvec})\overline{\varphi}$, where
$\theta_j^{\hvec}=\theta_j^{\hvec}(x,t)\in(0,1)$, for $j=3,4$.

Let us now estimate all the terms in \eqref{qid}. At first, we have
\begin{align}
&
\Big|\int_\Omega\big((\uvec^{\hvec}-\overline{\uvec})\cdot\nabla(\varphi^{\hvec}-\overline{\varphi})\big)
\,q\,dx\Big|
\,\leq\Vert\uvec^{\hvec}-\overline{\uvec}\Vert_{L^4(\Omega)^2}\,\Vert\nabla(\varphi^{\hvec}-\overline{\varphi})\Vert\,\Vert q\Vert_{L^4(\Omega)}\nonumber\\
&\quad\leq\, C\Vert\nabla(\uvec^{\hvec}-\overline{\uvec})\Vert\,
\Vert\hvec\Vert_{\cal V}
\,\big(\Vert\nabla q\Vert\,+\,\Vert q\Vert\big)
\,\leq\,\epsilon'\,\Vert\nabla q\Vert^2\,+\,\Vert q\Vert^2
\,+\,C_{\epsilon'}\Vert\nabla(\uvec^{\hvec}-\overline{\uvec})\Vert^2
\Vert\hvec\Vert^2_{\cal V}\,,
\label{est44}
\\[4mm]
&\Big|\int_\Omega(\pvec\cdot\nabla\overline{\varphi})\,q\,dx\Big|
\,\leq\,\Vert\pvec\Vert_{L^4(\Omega)^2}\,
\Vert\nabla\overline{\varphi}\Vert
_{L^4(\Omega)^2}
\,\Vert q\Vert\,\le\,C\,\|\pvec\|_{L^4(\Omega)^2}\,\|\overline{\varphi}\|_{H^2(\Omega)}\,
\|q\|
\nonumber
\\[1mm]
&\quad\leq\epsilon\,\Vert\nabla\pvec\Vert^2\,+\,C_\epsilon\,\Vert q\Vert^2\,.\label{est45}
\end{align}

As far as the term on the right-hand side of \eqref{qid} is concerned, we first observe that we can write
\begin{align}
&F'(\varphi^{\hvec})-F'(\overline{\varphi})-F''(\overline{\varphi})\,\eta^{\hvec}=
(\varphi^{\hvec}-\overline{\varphi})\int_0^1\big[ F''(\tau\varphi^{\hvec}+(1-\tau)\overline{\varphi})-F''(\overline{\varphi})\big]\,d\tau
+F''(\overline{\varphi})\,q.\nonumber
\end{align}
Therefore, we have
\begin{align}
&\nabla\big(F'(\varphi^{\hvec})-F'(\overline{\varphi})-F''(\overline{\varphi})\eta^{\hvec}\big)
=\nabla(\varphi^{\hvec}-\overline{\varphi})
\int_0^1\big[ F''(\tau\varphi^{\hvec}+(1-\tau)\overline{\varphi})-F''(\overline{\varphi})\big]d\tau\nonumber\\
&+(\varphi^{\hvec}-\overline{\varphi})
\int_0^1 \big[F'''(\tau\varphi^{\hvec}+(1-\tau)\overline{\varphi})(\tau\nabla\varphi^{\hvec}+(1-\tau)\nabla\overline{\varphi})
-F'''(\overline{\varphi})\nabla\overline{\varphi}\big]d\tau\nonumber\\
&+\,F''(\overline{\varphi})\nabla q\,+\,q\,F'''(\overline{\varphi})\nabla\overline{\varphi}\nonumber\\
&=\nabla(\varphi^{\hvec}-\overline{\varphi})\int_0^1 \!\!\int_0^1 F'''\big(s(\tau\varphi^{\hvec}+(1-\tau)\overline{\varphi})+(1-s)\overline{\varphi}\big)
(\tau\varphi^{\hvec}+(1-\tau)\overline{\varphi}-\overline{\varphi})ds\,d\tau\nonumber\\
&+(\varphi^{\hvec}-\overline{\varphi})
\int_0^1 \Big[F'''(\tau\varphi^{\hvec}+(1-\tau)\overline{\varphi})\tau\nabla(\varphi^{\hvec}-\overline{\varphi})\nonumber\\
&+\nabla\overline{\varphi}\int_0^1 F^{(4)}\big(s(\tau\varphi^{\hvec}+(1-\tau)\overline{\varphi})+(1-s)\overline{\varphi}\big)
(\tau\varphi^{\hvec}+(1-\tau)\overline{\varphi}-\overline{\varphi})\,ds\Big]d\tau\nonumber\\
&+\,F''(\overline{\varphi})\nabla q\,+\,q\,F'''(\overline{\varphi})\nabla\overline{\varphi}\nonumber
\\[2mm]
&=\overline{A}_{\hvec}\,(\varphi^{\hvec}-\overline{\varphi})\nabla(\varphi^{\hvec}-\overline{\varphi})+
\overline{B}_{\hvec}\,(\varphi^{\hvec}-\overline{\varphi})^2\nabla\overline{\varphi}+F''(\overline{\varphi})\nabla q+qF'''(\overline{\varphi})\nabla\overline{\varphi},\label{est39}
\end{align}
where we have set
\begin{align}
&\overline{A}_{\hvec}:=\int_0^1\tau \int_0^1 F'''(s\tau\varphi^{\hvec}+(1-s\tau)\overline{\varphi})
\,ds\,d\tau\,+
\int_0^1\tau \,F'''(\tau\varphi^{\hvec}+(1-\tau)\overline{\varphi})\, d\tau\,,\nonumber\\
&\overline{B}_{\hvec}:=\int_0^1\tau \int_0^1 F^{(4)}(s\tau\varphi^{\hvec}+(1-s\tau)\overline{\varphi})
\,ds\,d\tau\,.\nonumber
\end{align}
Observe that in view of the global bounds (\ref{bound2}) we have
\begin{align}
&
\|\overline{A}_{\hvec}\|_{L^\infty(Q)}\,+\,\|\overline{B}_{\hvec}\|_{L^\infty(Q)}\,\le \,
C_2^*\,,
\label{aquer}
\end{align}
with a constant $C_2^*>0$ that does not depend on the choice of $\hvec\in {\cal V}$ with
$\|\hvec\|_{\cal V}\le\Lambda$.

Now, on account of \eqref{est39}, the expression on the right-hand side of \eqref{qid} takes the form
\begin{align}
&-\int_\Omega\nabla q\cdot\nabla\big(a\,q-{{K}}\ast q+F'(\varphi^{\hvec})-F'(\overline{\varphi})-F''(\overline{\varphi})\eta^{\hvec}\big)\,dx\nonumber
\\[1mm]
&=-\big(\nabla q,(a+F''(\overline{\varphi}))\nabla q\big)-\big(\nabla q,q\,F'''(\overline{\varphi})\nabla\overline{\varphi}\big)
-\big(\nabla q,q\,\nabla a-\nabla {{K}}\ast q\big)\nonumber\\
&\hspace*{4.5mm}-\big(\nabla q,\overline{A}_{\hvec}\,(\varphi^{\hvec}-\overline{\varphi})\nabla(\varphi^{\hvec}-\overline{\varphi})\big)
-\big(\nabla q,\overline{B}_{\hvec}\,
(\varphi^{\hvec}-\overline{\varphi})^2\,\nabla\overline{\varphi}\big),\label{est40}
\end{align}
and the last four terms in \eqref{est40} can be estimated in the following way:
\begin{align}
&\big|\big(\nabla q,qF'''(\overline{\varphi})\nabla\overline{\varphi}\big)\big|
\,\leq\,C\,\Vert\nabla q\Vert\,\Vert q\Vert_{L^4(\Omega)}\,\Vert\nabla\overline{\varphi}\Vert_{L^4(\Omega)^2}\nonumber
\\[1mm]
&\leq\,C\,\Vert\nabla q\Vert\,\big(\Vert q\Vert+\Vert q\Vert^{1/2}\,\Vert\nabla q\Vert^{1/2}\big)\,
\leq\, \epsilon'\,\Vert\nabla q\Vert^2\,+\,C_{\epsilon'}\,\Vert q\Vert^2\,,\label{est46}
\\[4mm]
&\big|\big(\nabla q,q\,\nabla a-\nabla {{K}}\ast q\big)\big|
\,\leq C\,\Vert\nabla q\Vert\,\Vert q\Vert\,\le\,
\epsilon'\,\Vert\nabla q\Vert^2\,+\,C_{\epsilon'}\,\Vert q\Vert^2\,,\label{est47}
\\[4mm]
&
\big|\big(\nabla q,\overline{A}_{\hvec}\,(\varphi^{\hvec}-\overline{\varphi})\nabla(\varphi^{\hvec}-\overline{\varphi})\big)\big|
\,\leq\,C\,\Vert\nabla(\varphi^{\hvec}-\overline{\varphi})
\Vert_{L^4(\Omega)^2}\,\Vert\varphi^{\hvec}-\overline{\varphi}\Vert_{L^4(\Omega)}\,\Vert\nabla q\Vert\nonumber\\[1mm]
&\leq\,\epsilon'\,\Vert\nabla q\Vert^2\,+\,C_{\epsilon'}\,
\Vert\varphi^{\hvec}-\overline{\varphi}\Vert_V^2\,
\Vert\varphi^{\hvec}-\overline{\varphi}\Vert_{H^2(\Omega)}^2
\,\le\,
\epsilon'\,\Vert\nabla q\Vert^2\,+\,C_{\epsilon'}\,
\Vert\varphi^{\hvec}-\overline{\varphi}\Vert_{H^2(\Omega)}^2
\,\|\hvec\|_{\cal V}^2\,,
\label{est42}
\\[4mm]
&
\big|\big(\nabla q,\overline{B}_{\hvec}\,(\varphi^{\hvec}-\overline{\varphi})^2\,\nabla\overline{\varphi}\big)\big|
\,\leq\,C\,\Vert\nabla q\Vert\,\Vert\varphi^{\hvec}-\overline{\varphi}\Vert_{L^8(\Omega)}^2\,\Vert\nabla\overline{\varphi}\Vert_{L^4(\Omega)^2}\nonumber
\\[1mm]
&\leq\,\epsilon'\,\Vert\nabla q\Vert^2\,+\,C_{\epsilon'}\,
\Vert\varphi^{\hvec}-\overline{\varphi}\Vert_V^4\,
\leq\,\epsilon'\,\Vert\nabla q\Vert^2\,+\,C_{\epsilon'}\,
\Vert\hvec\Vert_{\cal V}^4\,.
\label{est43}
\end{align}

We now insert the estimates \eqref{est51}--\eqref{est38} in \eqref{pid} and the estimates \eqref{est44}, \eqref{est45} and \eqref{est46}--\eqref{est43}
in \eqref{qid} and recall \eqref{est40} and the {conditions} (\ref{F1}) and (\ref{nu}).
Adding the resulting inequalities, and fixing $\epsilon>0$ and $\epsilon'>0$ small
enough (i.e.,
$\,\epsilon\leq
\hat\nu_1/22$
and $\epsilon'\leq \hat c_1/16$),
we obtain that almost everywhere in $(0,T)$ we have the inequality
\begin{align}
&\frac{d}{dt}\big(\Vert\pvec^{\hvec}\Vert^2+\Vert q^{\hvec}\Vert^2\big)+
\hat\nu_1
\,\Vert\nabla\pvec^{\hvec}\Vert^2+
\hat c_1
\,\Vert\nabla q^{\hvec}\Vert^2
\leq\,\alpha\,\big(\Vert\pvec^{\hvec}\Vert^2+\Vert q^{\hvec}\Vert^2\big)\,+\,\beta_{\hvec},\label{est48}
\end{align}
where the functions $\alpha,\beta_{\hvec}\in L^1(0,T)$ are given by
\begin{align*}
\alpha(t)&:= C\,\big(1+\Vert\overline{\uvec}(t)\Vert_{H^2(\Omega)^2}^2\big),\\[2mm]
\beta_{\hvec}(t)&:=
C\,\|\hvec\|_{\cal V}^4\,\big(1+\|\overline{\uvec}(t)\|^2_{H^2(\Omega)^2}\big)\,+\,
C\,\|\hvec\|_{\cal V}^2\,\Big(\|\nabla(\uvec^{\hvec}-\overline{\uvec})(t)\|^2
\,+\,\Vert(\varphi^{\hvec}-\overline{\varphi})(t)\Vert^2_{H^2(\Omega)}\nonumber
\\
&\quad\,\,
\,\, +\,
\|\nabla(\uvec^{\hvec}-\overline{\uvec})(t)\|\,
\big(\Vert\uvec^{\hvec}(t)\Vert_{H^2(\Omega)^2}+
\Vert\overline{\uvec}(t)\Vert_{H^2(\Omega)^2}\big)
\Big)\,.
\end{align*}
Now, since $\|\hvec\|_{\cal V}\le\Lambda$, it follows from the global bounds (\ref{bound2}) and (\ref{bound3}) that
\begin{align}
&\int_0^T \beta_{\hvec}(t)\,dt \,\leq\,C\,\Vert\hvec\Vert_{\mathcal{V}}^3.\nonumber
\end{align}
Taking \eqref{icspq} into account, we therefore can infer from Gronwall's lemma that
\begin{align}
&\Vert\pvec^{\hvec}\Vert_{C^0([0,T];G_{div})}^2\,+\,\Vert\pvec^{\hvec}\Vert_{L^2(0,T;V_{div})}^2
\,+\,\Vert q^{\hvec}\Vert_{C^0([0,T];H)}^2
\,+\,\Vert q^{\hvec}\Vert_{L^2(0,T;V)}^2
\,\leq\,C\,\Vert\hvec\Vert_{\mathcal{V}}^3.\nonumber
\end{align}
Hence, it holds
\begin{align*}
&\frac{\Vert S(\overline{\vvec}+\hvec)-S(\overline{\vvec})-[\xivec^{\hvec},\eta^{\hvec}]\Vert_{\mathcal{Z}}}{\Vert\hvec\Vert_{\mathcal{V}}}
\,=\,\frac{\Vert[\pvec^{\hvec},q^{\hvec}]\Vert_{\mathcal{Z}}}{\Vert\hvec\Vert_{\mathcal{V}}}\,
\leq\,C\,\Vert\hvec\Vert_{\mathcal{V}}^{1/2}
\to 0,
\end{align*}
as $\|\hvec\|_{\cal V}\to 0$, which  concludes the proof of the theorem.
\end{proof}

\vspace{7mm}\noindent
\textbf{First-order necessary optimality conditions.}
From Theorem \ref{diffcontstat} we can deduce the following necessary optimality condition:

\begin{cor}
Suppose that the general hypotheses {\bf (H1)}--{\bf (H4)} are fulfilled, and assume that $\overline{\vvec}\in\mathcal{V}_{ad}$ is
an optimal control for {\bf (CP)} with associated state $[\overline{\uvec},\overline{\varphi}]={\cal S}(\overline{\vvec})$.
Then it holds
\begin{align}
&\beta_1\int_0^T\!\!\int_\Omega(\overline{\uvec}-\uvec_Q)\cdot\xivec^{\hvec}\,dx\,dt
\,+\,\beta_2\int_0^T\!\!\int_\Omega(\overline{\varphi}-\varphi_Q)\,\eta^{\hvec}\,dx\,dt
\,+\,\beta_3\int_\Omega(\overline{\uvec}(T)-\uvec_\Omega)\cdot\xivec^{\hvec}(T)\,dx\nonumber\\
&+\,\beta_4\int_\Omega(\overline{\varphi}(T)-\varphi_\Omega)\,\eta^{\hvec}(T)\,dx
\,+\,\gamma\int_0^T\!\!\int_\Omega\overline{\vvec}\cdot(\vvec-\overline{\vvec})\,dx\,dt\,\geq\, 0\qquad\forall\,\vvec\in\mathcal{V}_{ad},
\label{nec.opt.cond}
\end{align}
where $[\xivec^{\hvec},\eta^{\hvec}]$ is the unique solution to the linearized system \eqref{linsy1}--\eqref{linics}
corresponding to $\hvec=\vvec-\overline{\vvec}$.
\end{cor}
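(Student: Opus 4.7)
The plan is to follow the standard convex-analytic route: convert optimality into a directional inequality using the convexity of $\mathcal{V}_{ad}$, and then unpack it via the chain rule, which is available thanks to the Fréchet differentiability established in Theorem~\ref{diffcontstat}.

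First I would fix an arbitrary $\vvec\in\mathcal{V}_{ad}$, set $\hvec:=\vvec-\overline{\vvec}$, and note that by convexity $\overline{\vvec}+\tau\hvec\in\mathcal{V}_{ad}$ for all $\tau\in[0,1]$. Optimality of $\overline{\vvec}$ then forces the scalar function $\tau\mapsto f(\overline{\vvec}+\tau\hvec)$ to attain its minimum on $[0,1]$ at $\tau=0$, so that taking the one-sided difference quotient and letting $\tau\to 0^+$ produces the variational inequality $f'(\overline{\vvec})\hvec\ge 0$, provided $f$ is Fréchet differentiable at $\overline{\vvec}$ in the sense required.

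To establish that differentiability I would argue that $f=\mathcal{J}\circ(\mathcal{S},\mathrm{id}_{\mathcal{V}})$ is a composition of Fréchet-differentiable maps. The outer map $\mathcal{J}:\mathcal{Z}\times\mathcal{V}\to\mathbb{R}$ is a sum of quadratic (hence smooth) functionals on Hilbert spaces; the only point that needs brief comment is that the terminal-observation terms $\tfrac{\beta_3}{2}\Vert\uvec(T)-\uvec_\Omega\Vert^2$ and $\tfrac{\beta_4}{2}\Vert\varphi(T)-\varphi_\Omega\Vert^2$ involve the evaluation at $t=T$, but this is a continuous linear operation on $\mathcal{Z}$ precisely because $\mathcal{Z}$ was defined so as to contain $C^0([0,T];G_{div})\times C^0([0,T];H)$. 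The inner map $\mathcal{S}:\mathcal{V}\to\mathcal{Z}$ is Fréchet differentiable at $\overline{\vvec}$ by Theorem~\ref{diffcontstat}, with derivative $\hvec\mapsto[\xivec^{\hvec},\eta^{\hvec}]$. Composing and using the usual formulas for the Gâteaux derivatives of the squared-norm terms, $f'(\overline{\vvec})\hvec$ is exactly the expression on the left-hand side of \eqref{nec.opt.cond} with $\vvec-\overline{\vvec}$ replaced by $\hvec$: the first two terms come from the distributed tracking part of $D_y\mathcal{J}$ paired against $[\xivec^{\hvec},\eta^{\hvec}]$, the next two from the terminal part, and the final one is $D_{\vvec}\mathcal{J}(\overline{y},\overline{\vvec})\hvec=\gamma(\overline{\vvec},\hvec)_{L^2(Q)^2}$. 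Substituting back $\hvec=\vvec-\overline{\vvec}$ in $f'(\overline{\vvec})\hvec\ge 0$ yields \eqref{nec.opt.cond}.

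Since every analytic ingredient has already been supplied, I do not foresee a genuine obstacle: the deep step was the Fréchet differentiability of the control-to-state operator, done in Theorem~\ref{diffcontstat}. The only conceptual point worth flagging in the write-up is the role played by the target space $\mathcal{Z}$ --- its continuous-in-time component is exactly what makes the terminal-time tracking contributions differentiable, and hence what permits the chain rule to produce the boundary terms $\beta_3\int_\Omega(\overline{\uvec}(T)-\uvec_\Omega)\cdot\xivec^{\hvec}(T)\,dx$ and $\beta_4\int_\Omega(\overline{\varphi}(T)-\varphi_\Omega)\,\eta^{\hvec}(T)\,dx$ in \eqref{nec.opt.cond}.
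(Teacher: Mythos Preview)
Your proposal is correct and follows essentially the same approach as the paper: derive the abstract condition $f'(\overline{\vvec})(\vvec-\overline{\vvec})\ge 0$ from convexity of $\mathcal{V}_{ad}$ and optimality, then expand it via the chain rule using the Fr\'echet differentiability of $\mathcal{S}$ from Theorem~\ref{diffcontstat} and the explicit partial derivatives of $\mathcal{J}$. Your remark on why the $C^0$-in-time component of $\mathcal{Z}$ is needed for the terminal tracking terms is a useful clarification that the paper leaves implicit.
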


\begin{proof}
Introducing the reduced cost functional $f:\mathcal{V}\to[0,\infty)$ given by $f(\vvec):=\mathcal{J}\big({\cal S}(\vvec),\vvec)$,
for all $\vvec\in\mathcal{V}$, where $\mathcal{J}:\mathcal{Z}\times\mathcal{V}\to[0,\infty)$ is given by \eqref{costfunct},
and invoking the convexity of $\mathcal{V}_{ad}$, we have (see, e.g., \cite[Lemma 2.21]{Tr})
\begin{align}
&f'(\overline{\vvec})(\vvec-\overline{\vvec})\geq 0\qquad\forall\,\vvec\in\mathcal{V}_{ad}.\label{abstcond}
\end{align}
Obviously, by the chain rule,
\begin{align}
&f'(\vvec)=\mathcal{J}_y'\big({\cal S}(\vvec),\vvec\big)\circ{\cal S}'(\vvec)+\mathcal{J}_{\vvec}'
\big({\cal S}(\vvec),\vvec\big),\label{Fr1}
\end{align}
where, for every fixed $\vvec\in\mathcal{V}$, $\mathcal{J}_y'\big(y,\vvec\big)\in\mathcal{Z}'$
 is the Fr\'echet derivative of $\mathcal{J}=\mathcal{J}(y,\vvec)$
with respect to $y$ at $y\in\mathcal{Z}$ and, for every fixed $y\in\mathcal{Z}$,
$\mathcal{J}_{\vvec}'\big(y,\vvec\big)\in\mathcal{V}'$
is the Fr\'echet derivative of $\mathcal{J}=\mathcal{J}(y,\vvec)$
with respect to $\vvec$  at $\vvec\in\mathcal{V}$.
We have
\begin{align}
&\mathcal{J}_y'\big(y,\vvec\big)(\zeta)=\beta_1\int_0^T\!\!\int_\Omega(\uvec-\uvec_Q)\cdot\boldsymbol{\zeta}_1\,dx\,dt
\,+\beta_2\int_0^T\!\!\int_\Omega(\varphi-\varphi_Q)\,\zeta_2\,dx\,dt\nonumber\\
&\quad+\beta_3\int_\Omega(\uvec(T)-\uvec_\Omega)\cdot\boldsymbol{\zeta}_1(T)\,dx\,+\beta_4\int_\Omega(\varphi(T)-\varphi_\Omega)\,\zeta_2(T)\,dx
\qquad\forall\,\zeta=[\boldsymbol{\zeta}_1,\zeta_2]\in\mathcal{Z},\label{Fr2}
\end{align}
where $y=[\uvec,\varphi]$. Moreover,
\begin{align}
&\mathcal{J}_{\vvec}'\big(y,\vvec\big){(\wvec)}=\gamma\int_0^T\!\!\int_\Omega \vvec\cdot
\wvec\,dx\,dt\qquad\forall\,\wvec\in\mathcal{V}.\label{Fr3}
\end{align}
Hence, \eqref{nec.opt.cond} follows from \eqref{abstcond}--\eqref{Fr3} on account of the fact that,
thanks to Theorem 3, we have
\begin{align*}
&{\cal S}'(\overline{\vvec})(\vvec-\overline{\vvec})=[\xivec^{\hvec},\eta^{\hvec}],
\end{align*}
where $[\xivec^{\hvec},\eta^{\hvec}]$ is the unique solution to the linearized system \eqref{linsy1}--\eqref{linics}
corresponding to $\hvec=\vvec-\overline{\vvec}$.
\end{proof}

\vspace{5mm}\noindent
\textbf{The adjoint system and first-order necessary optimality conditions.}
We now aim to eliminate the variables $\,[\xivec^{\hvec},\eta^{\hvec}]\,$ from the variational
inequality (\ref{nec.opt.cond}). To this end, let
 us introduce the following {\itshape adjoint system}:
\begin{align}
\ptil_t\,=\,&-\,2\,\mbox{div}\big(\nu(\overline{\varphi})\,D\ptil\big)
-(\overline{\uvec}\cdot\nabla)\,\ptil+(\ptil\cdot\nabla^T)\,\overline{\uvec}
\,+\,\qtil\,\nabla\overline{\varphi}-\beta_1(\overline{\uvec}-\uvec_Q)\,,\label{adJ1}\\
\qtil_t\,=\,&-(a\,\Delta\qtil\,+\,\nabla {{K}}\dot{\ast}\nabla\qtil\,+\,F''(\overline{\varphi})\,\Delta\qtil)-\overline{\uvec}\cdot\nabla\qtil
\,+\,2\,\nu'(\overline{\varphi})\,D\overline{\uvec}:D\ptil
\nonumber\\
&-\big(a\,\ptil\cdot\nabla\overline{\varphi}-K\ast(\ptil\cdot\nabla\overline{\varphi})+F''(\overline{\varphi})\,\ptil\cdot\nabla\overline{\varphi}\big)
+\ptil\cdot\nabla\overline{\mu}-\beta_2(\overline{\varphi}-\varphi_Q)\,,\label{adJ2}\\
\mbox{div}(&\ptil)=0,\label{adJ3}\\
\ptil=&0,\qquad\frac{\partial\qtil}{\partial\nvec}=0\quad\mbox{on }\Sigma,\label{adJ4}\\
\ptil(T&)=\beta_3(\overline{\uvec}(T)-\uvec_\Omega),\qquad\qtil(T)=\beta_4(\overline{\varphi}(T)-\varphi_\Omega).\label{adJics}
\end{align}
Here, we have set
$$(\nabla {{K}}\dot{\ast}\nabla\qtil)(x):=\int_\Omega\nabla {{K}}(x-y)\cdot\nabla\qtil(y) \,dy\, \quad\mbox{for a.\,e. }\,x\in\Omega\,.
$$

Since $\uvec_\Omega\in G_{div}$, $\varphi_\Omega\in H$, the solution to \eqref{adJ1}--\eqref{adJics}
can only be expected to enjoy the regularity
\begin{align}
\ptil&\in
H^1(0,T;(V_{div})')
\cap C([0,T];G_{div})\cap L^2(0,T;V_{div}),\nonumber\\
\qtil&\in
H^1(0,T;V')
\cap C([0,T];H)\cap L^2(0,T;V).\label{reg.adJ.sol}
\end{align}
Hence, the pair $[\ptil,\qtil]$ must be understood as a solution to the following weak formulation
of the system \eqref{adJ1}--\eqref{adJ4} (where the argument $t$ is again omitted):
\begin{align}
&\langle\ptil_t,\zvec\rangle_{{{V_{div}}}}-2\,\big(\nu(\overline{\varphi})D\ptil,D\zvec\big)
\,=\,-b(\overline{\uvec},\ptil,\zvec)+b(\zvec,\overline{\uvec},\ptil)
+\big(\qtil\nabla\overline{\varphi},\zvec\big)-\beta_1\big((\overline{\uvec}-\uvec_Q),\zvec\big),\label{wfadj1}\\
&\langle\qtil_t,\chi\rangle_{{{V}}}-\big((a+F''(\overline{\varphi}))\nabla\qtil,\nabla\chi\big)
\,=\,\big(\nabla a+F'''(\overline{\varphi})\nabla\overline{\varphi},\chi\nabla\qtil\big)
-\big(\nabla
K
\dot{\ast}\nabla\qtil,\chi\big)
-\big(\overline{\uvec}\cdot\nabla\qtil,\chi\big)\nonumber\\
&\quad
+2\,\big(\nu'(\overline{\varphi})D\overline{\uvec}:D\ptil,\chi\big)
-\big((a\ptil\cdot\nabla\overline{\varphi}-
{K}\ast(\ptil\cdot\nabla\overline{\varphi})+F''(\overline{\varphi})\ptil\cdot\nabla\overline{\varphi}),\chi\big)
\nonumber\\
&\quad
+\big(\ptil\cdot\nabla\overline{\mu},\chi\big)-\beta_2\big((\overline{\varphi}-\varphi_Q),\chi\big),\label{wfadj2}
\end{align}
for every $\zvec\in V_{div}$, every $\chi\in V$ and almost every $t\in (0,T)$.
We have the following result.

\begin{prop}
Suppose that the hypotheses {\bf (H1)}--{\bf (H4)} are fulfilled. Then
the adjoint system \eqref{adJ1}--\eqref{adJics} has a unique weak solution $[\ptil,\qtil]$ satisfying \eqref{reg.adJ.sol}.
\end{prop}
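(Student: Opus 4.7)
The plan is to reduce the backward-in-time system \eqref{adJ1}--\eqref{adJics} to a forward problem via the change of variable $\tau:=T-t$, and then to follow the same Faedo--Galerkin strategy used in the proof of Proposition~\ref{linthm}. Let $\widehat\pvec(\tau):=\ptil(T-\tau)$ and $\widehat\qtil(\tau):=\qtil(T-\tau)$. The system becomes a linear, forward, parabolic system in $[\widehat\pvec,\widehat\qtil]$ with initial data $\widehat\pvec(0)=\beta_3(\overline\uvec(T)-\uvec_\Omega)\in G_{div}$, $\widehat\qtil(0)=\beta_4(\overline\varphi(T)-\varphi_\Omega)\in H$, and with coefficients depending on $\overline\uvec$, $\overline\varphi$, $\overline\mu$, which by Theorem~\ref{thm1} satisfy \eqref{bound1}. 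I would project onto the finite-dimensional spaces spanned by the first $n$ eigenfunctions $\{\wvec_j\}$ of the Stokes operator and $\{\psi_j\}$ of the Neumann operator $B$, obtaining a linear Cauchy problem for $2n$ ODEs whose coefficient functions belong to $L^2(0,T)$; Carath\'eodory's theorem delivers a unique solution $[\widehat\pvec_n,\widehat\qtil_n]$ in $H^1(0,T;\mathbb{R}^{2n})$.

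The key step is the a priori estimate uniform in $n$. I would test the Galerkin analogue of \eqref{wfadj1} by $\widehat\pvec_n$ and that of \eqref{wfadj2} by $\widehat\qtil_n$ and sum. The viscous and the $(a+F''(\overline\varphi))|\nabla\widehat\qtil_n|^2$ terms are controlled from below by \eqref{nu} and \eqref{F1}, producing $\hat\nu_1\|\nabla\widehat\pvec_n\|^2+\hat c_1\|\nabla\widehat\qtil_n\|^2$ on the left. On the right I would estimate each term as in Proposition~\ref{linthm}: the transport term $b(\widehat\pvec_n,\overline\uvec,\widehat\pvec_n)$ via Gagliardo--Nirenberg and H\"older gives $\varepsilon\|\nabla\widehat\pvec_n\|^2+C_\varepsilon\|\overline\uvec\|_{H^2}^2\|\widehat\pvec_n\|^2$; the coupling term $(\widehat\qtil_n\nabla\overline\varphi,\widehat\pvec_n)$ is bounded using $\|\overline\varphi\|_{H^2}\le C$; the cross term $2(\nu'(\overline\varphi)D\overline\uvec:D\widehat\pvec_n,\widehat\qtil_n)$ gives $\varepsilon\|\nabla\widehat\pvec_n\|^2+\varepsilon'\|\nabla\widehat\qtil_n\|^2+C_{\varepsilon,\varepsilon'}\|\overline\uvec\|_{H^2}^2\|\widehat\qtil_n\|^2$; the nonlocal terms with $K\ast(\widehat\pvec_n\cdot\nabla\overline\varphi)$ and $\nabla K\,\dot{\ast}\,\nabla\widehat\qtil_n$ are handled by Young's convolution inequality, yielding either $\|\nabla\widehat\qtil_n\|\|\widehat\pvec_n\|$ or, after integration by parts when $K\in W^{2,1}(B_\rho)$ (or directly via \eqref{adm} when $K$ is admissible), bounds of the form $\varepsilon'\|\nabla\widehat\qtil_n\|^2+C_{\varepsilon'}\|\widehat\qtil_n\|^2$; the terms $(\ptil\cdot\nabla\overline\mu,\widehat\qtil_n)$ and $(F''(\overline\varphi)\ptil\cdot\nabla\overline\varphi,\widehat\qtil_n)$ are controlled using $\overline\mu\in C^0([0,T];H^2(\Omega))$ from \eqref{regmu}, hence $\|\nabla\overline\mu\|_{L^4}\le C$. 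Finally, the source terms $\beta_1(\overline\uvec-\uvec_Q)$ and $\beta_2(\overline\varphi-\varphi_Q)$ contribute $C\|\overline\uvec-\uvec_Q\|^2+C\|\overline\varphi-\varphi_Q\|^2\in L^1(0,T)$. Choosing $\varepsilon,\varepsilon'$ small enough, I arrive at
\begin{align*}
\frac{d}{d\tau}\big(\|\widehat\pvec_n\|^2+\|\widehat\qtil_n\|^2\big)+\tfrac{\hat\nu_1}{2}\|\nabla\widehat\pvec_n\|^2+\tfrac{\hat c_1}{2}\|\nabla\widehat\qtil_n\|^2 \le \alpha(\tau)\big(\|\widehat\pvec_n\|^2+\|\widehat\qtil_n\|^2\big)+\beta(\tau),
\end{align*}
with $\alpha,\beta\in L^1(0,T)$ depending only on the global data. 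Since $\alpha\in L^1(0,T)$ thanks to $\overline\uvec\in L^2(0,T;H^2(\Omega)^2)$, Gronwall's lemma yields uniform $L^\infty(0,T;G_{div})\cap L^2(0,T;V_{div})$ and $L^\infty(0,T;H)\cap L^2(0,T;V)$ bounds. Comparison in the Galerkin equations then furnishes uniform $L^2$-in-time bounds on $\partial_\tau\widehat\pvec_n$ in $(V_{div})'$ and on $\partial_\tau\widehat\qtil_n$ in $V'$.

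With these estimates, standard weak/weak-$\ast$ compactness and Aubin--Lions produce a limit $[\widehat\pvec,\widehat\qtil]$ with the regularity claimed in \eqref{reg.adJ.sol}; passing to the limit in the linear Galerkin identities (the only slightly nonstandard point is the convolution term $\nabla K\,\dot{\ast}\,\nabla\widehat\qtil_n$, which passes by the weak convergence of $\nabla\widehat\qtil_n$ in $L^2(Q)^2$ and the continuity of convolution by $\nabla K$) yields a weak solution to \eqref{wfadj1}--\eqref{wfadj2}. Uniqueness is immediate from linearity: the difference $[\widetilde\pvec,\widetilde\qtil]$ of two solutions with identical data solves the same system with zero source and zero terminal value, so the same energy estimate together with Gronwall forces $\widetilde\pvec\equiv 0$, $\widetilde\qtil\equiv 0$. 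Reversing time gives the desired $[\ptil,\qtil]$.

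The main obstacle will be the term $(\ptil\cdot\nabla\overline\mu,\widehat\qtil_n)$ in \eqref{wfadj2}, which requires the regularity $\overline\mu\in C^0([0,T];H^2(\Omega))$ from \eqref{regmu} to control $\|\nabla\overline\mu\|_{L^4}$; and the ``wrong-side'' convolution $\nabla K\,\dot{\ast}\,\nabla\widehat\qtil_n$, which must be handled either by integration by parts (exploiting $K(z)=K(-z)$ and the hypothesis $K\in W^{2,1}(B_\rho)$) or by invoking \eqref{adm} in the admissible case; in both cases one obtains an estimate of the form $C\|\widehat\qtil_n\|\,\|\nabla\widehat\qtil_n\|$, which is absorbable. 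Everything else proceeds exactly as in Proposition~\ref{linthm}.
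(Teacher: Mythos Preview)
Your proof is correct and follows essentially the same route as the paper: energy estimates obtained by testing \eqref{wfadj1} with $\ptil$ and \eqref{wfadj2} with $\qtil$, absorption via \eqref{F1} and \eqref{nu}, and Gronwall. The only cosmetic difference is that you reverse time and apply the forward Gronwall lemma, whereas the paper keeps the backward formulation and applies Gronwall backward on $[t,T]$; the resulting differential inequality and bounds are identical. One minor remark: the term $\int_\Omega(\nabla K\,\dot\ast\,\nabla\qtil)\,\qtil\,dx$ that you flag as a ``main obstacle'' is in fact immediate---since $(\nabla K\,\dot\ast\,\nabla\qtil)(x)=\sum_i(\partial_i K\ast\partial_i\qtil)(x)$, Young's convolution inequality gives $\|\nabla K\,\dot\ast\,\nabla\qtil\|\le\|\nabla K\|_{L^1(B_\rho)}\|\nabla\qtil\|$ directly, without any appeal to $K\in W^{2,1}(B_\rho)$ or to \eqref{adm}.
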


\begin{proof}
We only give a sketch of the proof which can be carried out in a similar way as the proof of Proposition
\ref{linthm}. In particular, we omit the implementation of the Faedo-Galerkin scheme and only derive
the basic estimates that weak solutions must satisfy. To this end,
we insert $\ptil(t)\in V_{div}$ in \eqref{wfadj1} and $\qtil(t)\in H$ in (\ref{wfadj2}), and add the resulting
equations, observing that we have
$\,\,b(\overline{u}(t),p(t),p(t))=(\overline{u}(t)\cdot\nabla\qtil(t),\qtil(t))=0$.
Omitting the argument $t$ again, we now estimate the resulting terms on the right-hand side
individually. We denote by $C$ positive constants that only depend on the global data and
on $[\overline{\uvec},\overline{\varphi}]$, while $C_\sigma$ stands for positive constants that also
depend on the quantity indicated by the index $\sigma$. Using the elementary Young's inequality,
the H\"older and Gagliardo-Nirenberg inequalities, Young's inequality for convolution
integrals, as well as the hypotheses {\bf (H1)}--{\bf (H4)} and the global bound (\ref{bound1}),
we obtain (with postive constants $\epsilon$ and $\epsilon'$ that will be fixed later) the following series
of estimates:
\begin{align*}
&\Big|\int_\Omega(\ptil\cdot\nabla^T)\overline{\uvec}\cdot\ptil\,dx\Big|
\,\leq\,\Vert\ptil\Vert\,\Vert\nabla\overline{\uvec}\Vert_{L^4(\Omega)^{2\times 2}}\,\,\Vert\ptil\Vert_{L^4(\Omega)^2}
\,\leq\,\epsilon\,\Vert\nabla\ptil\Vert^2+C_\epsilon\,\Vert\overline{\uvec}\Vert_{H^2(\Omega)^2}^2\,\Vert\ptil\Vert^2,\\[2mm]
&\Big|\int_\Omega\qtil\,\nabla\overline{\varphi}\cdot\ptil\,dx\Big| \,\leq\,\Vert\qtil\Vert\,\Vert\nabla\overline{\varphi}\Vert_{L^4(\Omega)^2}\,\Vert\ptil\Vert_{L^4(\Omega)^2}
\,\leq\,\epsilon\,\Vert\nabla\ptil\Vert^2+C_\epsilon\,\Vert\qtil\Vert^2,\\[2mm]
&\Big|\beta_1\int_\Omega(\overline{\uvec}-\overline{\uvec}_Q)\cdot\ptil\,dx\Big|
\,\leq\,\beta_1\,\Vert\overline{\uvec}-\overline{\uvec}_Q\Vert\,
\Vert\ptil\Vert\,\leq\,\Vert\ptil\Vert^2
+\frac{\beta_1^2}{4}\,\Vert\overline{\uvec}-\overline{\uvec}_Q\Vert^2,\\[2mm]
&
\Big|\int_\Omega\qtil\,\nabla\qtil\cdot\big(\nabla a+F'''(\overline{\varphi})\nabla\overline{\varphi}\big)\,dx \Big|
\le\, C_K\,\Vert\qtil\Vert\,\Vert\nabla\qtil\Vert
\,+\,C\,\Vert\qtil\Vert_{L^4(\Omega)} \,\Vert\nabla\qtil\Vert\,\Vert\nabla\overline{\varphi}\Vert_{L^4(\Omega)^2}
\nonumber
\\[2mm]
&
\quad\le\, C_K\,\Vert\qtil\Vert\,\Vert\nabla\qtil\Vert
\,+\,C\,\big(\Vert\qtil\Vert
+\Vert\qtil\Vert^{1/2}\,\Vert\nabla\qtil\Vert^{1/2}\big)\,\Vert\nabla\qtil\Vert\,\Vert\overline{\varphi}\Vert_{H^2(\Omega)}
\nonumber\\[2mm]
&
\quad\leq\, \epsilon'\,\Vert\nabla\qtil\Vert^2 + C_{\epsilon'}\,\Vert\qtil\Vert^2,
\\[2mm]
&\Big|\int_\Omega\big(\nabla {{K}}\dot{\ast}\nabla\qtil\big)\,\qtil\,dx\Big|
\,\leq\, C_K\,\Vert\nabla\qtil\Vert\,\Vert\qtil\Vert\,\leq\,\epsilon'\,\Vert\nabla\qtil\Vert^2+C_{\epsilon'}\,\Vert\qtil\Vert^2,\\[2mm]
&\Big|2\int_\Omega\big(\nu'(\overline{\varphi})\,D\overline{\uvec}\!:\!D\ptil\big)\,\qtil
\,dx\Big| \,\leq\,
C\,\Vert D\overline{\uvec}\Vert_{L^4(\Omega)^{2\times 2}}\,\Vert D\ptil\Vert\,
\Vert \qtil\Vert_{L^4(\Omega)}
\nonumber
\\[2mm]
&\quad\le\,
C\,\Vert D\overline{\uvec}\Vert_{L^4(\Omega)^{2\times 2}}\,\Vert D\ptil\Vert\,\big
(\Vert\qtil\Vert+
\Vert\qtil\Vert^{1/2}\,\Vert\nabla\qtil\Vert^{1/2}\big)
\nonumber\\[2mm]
&\quad\leq\,\epsilon\,\Vert\nabla\ptil\Vert^2+\epsilon'\,\Vert\nabla\qtil\Vert^2
+C_{\epsilon,\epsilon'}\,\big(1+\Vert\overline{\uvec}\Vert_{H^2(\Omega)^2}^2\big)
\,\Vert\qtil\Vert^2,\\[2mm]
&\Big|\int_\Omega(a\,\ptil\cdot\nabla\overline{\varphi})\,\qtil\,dx\Big|\,\leq\,
C_K\,\|\ptil\|_{{L^4(\Omega)^2}}\,\|\nabla\overline{\varphi}\|_{L^4(\Omega)^2}\,\|\qtil\|
\,\le\,C\,\Vert\nabla\ptil\Vert\,\Vert\qtil\Vert
\nonumber\\[2mm]
&\quad \leq\,\epsilon\,\Vert\nabla\ptil\Vert^2+C_{\epsilon}\,\Vert\qtil\Vert^2,
\\[2mm]
&\Big|\int_\Omega K\ast(\ptil\cdot\nabla\overline{\varphi})\,\qtil\,dx\Big|
\,\le\,
C_K\,\|\ptil\|_{{L^4(\Omega)^2}}\,\|\nabla\overline{\varphi}\|_{L^4(\Omega)^2}\,\|\qtil\|
\,\leq\,C\,\Vert\nabla\ptil\Vert\Vert\,\qtil\Vert\nonumber\\[2mm]
&\quad\leq\,\epsilon\,\Vert\nabla\ptil\Vert^2+C_{\epsilon}\,\Vert\qtil\Vert^2,\\[2mm]
&\Big|\int_\Omega F''(\overline{\varphi})(\ptil\cdot\nabla\overline{\varphi})\,\qtil\,dx \Big| \,\leq\,C\,\Vert\nabla\ptil\Vert\,\Vert\qtil\Vert
\,\leq\,\epsilon\,\Vert\nabla\ptil\Vert^2+C_{\epsilon}\,\Vert\qtil\Vert^2,\\[2mm]
&\Big|\int_\Omega(\ptil\cdot\nabla\overline{\mu})\,\qtil\,dx\Big|\,\leq\,
\|\ptil\|_{L^4(\Omega)^2}\,\|\nabla\overline{\mu}\|_{L^4(\Omega)^2}\,\|\qtil\|
\,\le\, C\,\Vert\overline{\mu}\Vert_{H^2(\Omega)}\,\Vert\nabla\ptil\Vert\,\Vert\qtil\Vert
\,\leq\,\epsilon\,\Vert\nabla\ptil\Vert^2+C_{\epsilon}\,\Vert\qtil\Vert^2,\\[2mm]
&\Big|\beta_2\int_\Omega(\overline{\varphi}-\varphi_Q)\,\qtil\,dx\Big|\,\leq\,\beta_2\,\Vert\overline{\varphi}-\varphi_Q\Vert\,\Vert\qtil\Vert
\,\leq\,\Vert\qtil\Vert^2+\frac{\beta_2^2}{4}\,\Vert\overline{\varphi}-\varphi_Q\Vert^2\,.
\end{align*}
Fixing now $\epsilon>0$ and $\epsilon'>0$ small enough (in particular, $\,7\epsilon\leq
\hat\nu_1/2\,$ and $\,3\epsilon'\leq \hat c_1/2$),
and  using
(\ref{F1}) and (\ref{nu}), we arrive at the following differential inequality:
\begin{align}
&\frac{d}{dt}\,\big(\Vert\ptil\Vert^2+\Vert\qtil\Vert^2\big)\,+\,\sigma\,\big(\Vert\ptil\Vert^2+\Vert\qtil\Vert^2\big)+\theta\,
\geq\,\hat \nu_1\,\Vert\nabla\ptil\Vert^2+\hat c_1\,\Vert\nabla\qtil\Vert^2,\label{diffineq3}
\end{align}
where the functions $\,\sigma,\theta\in L^1(0,T)\,$ are given by
\begin{align*}
&\sigma(t):=C\,\big(1+\Vert\overline{\uvec}(t)\Vert_{H^2(\Omega)^2}^2\big),
\qquad\theta(t):=C\,\big(\beta_1^2\,\Vert(\overline{\uvec}-\uvec_Q)(t)\Vert^2+\,\beta_2^2\,\Vert(\overline{\varphi}-\varphi_Q)(t)\Vert^2\big).
\end{align*}
By applying the (backward) Gronwall lemma to \eqref{diffineq3}, we obtain
\begin{align*}
&\Vert\ptil(t)\Vert^2+\Vert\qtil(t)\Vert^2\leq\Big[\Vert\ptil(T)\Vert^2+\Vert\qtil(T)\Vert^2
+\int_t^T\theta(\tau)d\tau\Big]e^{\int_t^T\sigma(\tau)d\tau}\nonumber\\
&\leq\,C\,\Big[\Vert\ptil(T)\Vert^2+\Vert\qtil(T)\Vert^2+
\beta_1^2\,\Vert\overline{\uvec}-\uvec_Q\Vert_{L^2(0,T;G_{div})}^2
+\beta_2^2\,\Vert\overline{\varphi}-\varphi_Q\Vert_{L^2(Q)}^2\Big],
\end{align*}
for all $t\in[0,T]$. From this estimate, and by integrating \eqref{diffineq3}
over $[t,T]$,
we can
deduce the estimates for $\ptil$ and $\qtil$ in $C^0([0,T];G_{div})\cap L^2(0,T;V_{div})$
and in $C^0([0,T];H)\cap L^2(0,T;V)$, respectively.
By a comparison argument in \eqref{adJ1}, \eqref{adJ2}, we also obtain the estimates
for $\ptil_t$ and $\qtil_t$ in $L^2(0,T;V_{div}')$ and in $L^2(0,T;V')$, respectively.
Therefore we deduce the existence of a weak solution to system \eqref{adJ1}--\eqref{adJics}
satisfying \eqref{reg.adJ.sol}.
The proof of uniqueness is rather straightforward, and we therefore
may omit the details here.
\end{proof}

Using the adjoint system, we can now eliminate $\xivec^{\hvec},\eta^{\hvec}$ from \eqref{nec.opt.cond}.
Indeed, we have the following result.
\begin{thm}
Suppose that the hypotheses {\bf (H1)}--{\bf (H4)} are fulfilled. Let
$\overline{\vvec}\in\mathcal{V}_{ad}$ be an optimal control
for the control problem {\bf (CP)} with associated state $[\overline{\uvec},\overline{\varphi}]={\cal S}(\overline{\vvec})$
and adjoint state $[\ptil,\qtil]$. Then it holds
 the variational inequality
 \begin{align}
&\gamma\int_0^T\!\!\int_\Omega\overline{\vvec}\cdot(\vvec-\overline{\vvec})\,dx\,dt\,+\int_0^T\!\!\int_\Omega\ptil\cdot(\vvec-\overline{\vvec})\,dx\,dt \,\geq\, 0
\,\quad\forall\,\vvec\in\mathcal{V}_{ad}.\label{nec.opt.cond2}
\end{align}
\end{thm}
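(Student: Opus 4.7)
The strategy is a classical duality argument. Starting from the variational inequality \eqref{nec.opt.cond} of Corollary~1 applied with the admissible choice $\hvec = \vvec - \overline{\vvec} \in \mathcal{V}$, the goal is to eliminate the pair $[\xivec^{\hvec}, \eta^{\hvec}]$ by re-expressing the first four integrals as a scalar product between the adjoint state $[\ptil, \qtil]$ and the source $\hvec$ of the linearized system.

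Concretely, I would test the weak forms of the linearized equations \eqref{linsy1}--\eqref{linsy2} with $\ptil(t) \in V_{div}$ and $\qtil(t) \in V$, respectively, and simultaneously test the weak adjoint equations \eqref{wfadj1}--\eqref{wfadj2} with $\xivec^{\hvec}(t) \in V_{div}$ and $\eta^{\hvec}(t) \in V$. This is legitimate because the regularities in \eqref{reglin} and \eqref{reg.adJ.sol} are matched: both $\xivec^{\hvec}$ and $\ptil$ lie in $H^1(0,T;(V_{div})')\cap L^2(0,T;V_{div})$, and both $\eta^{\hvec}$ and $\qtil$ lie in $H^1(0,T;V')\cap L^2(0,T;V)$. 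Summing the four identities and integrating over $(0,T)$, the standard integration-by-parts formula in time produces the boundary contribution $(\xivec^{\hvec}(T),\ptil(T)) + (\eta^{\hvec}(T),\qtil(T))$ since $\xivec^{\hvec}(0) = 0$ and $\eta^{\hvec}(0) = 0$ by \eqref{linics}, and substituting the terminal conditions $\ptil(T) = \beta_3(\overline{\uvec}(T)-\uvec_\Omega)$, $\qtil(T) = \beta_4(\overline{\varphi}(T)-\varphi_\Omega)$ from \eqref{adJics} exactly reproduces the third and fourth cost terms in \eqref{nec.opt.cond}.

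The central task is to verify that every bilinear term in which both a linearized and an adjoint variable appear cancels pairwise between the two testings; this is how the adjoint system is designed, and it is a lengthy but essentially routine verification. The symmetric Stokes diffusion $2(\nu(\overline{\varphi})D\xivec^{\hvec},D\ptil)$ cancels against itself; $b(\overline{\uvec},\xivec^{\hvec},\ptil) + b(\overline{\uvec},\ptil,\xivec^{\hvec}) = 0$ by antisymmetry of $b$; the terms arising from $(\xivec^{\hvec}\cdot\nabla)\overline{\uvec}$ and $(\ptil\cdot\nabla^T)\overline{\uvec}$ match; the viscosity coupling $2\,\mbox{div}(\nu'(\overline{\varphi})\eta^{\hvec} D\overline{\uvec})$ pairs with $2(\nu'(\overline{\varphi})D\overline{\uvec}\!:\!D\ptil,\eta^{\hvec})$; the capillarity coupling $((a\eta^{\hvec}-K\ast\eta^{\hvec}+F''(\overline{\varphi})\eta^{\hvec})\nabla\overline{\varphi},\ptil)$ is matched, thanks to the self-adjointness of $K\ast$, with $(a\ptil\cdot\nabla\overline{\varphi}-K\ast(\ptil\cdot\nabla\overline{\varphi})+F''(\overline{\varphi})\ptil\cdot\nabla\overline{\varphi},\eta^{\hvec})$ in the adjoint; the term $(\overline{\mu}\nabla\eta^{\hvec},\ptil)$ is rewritten by integration by parts, using $\mbox{div}(\ptil) = 0$ and $\ptil|_{\partial\Omega} = 0$, as $-(\eta^{\hvec}\nabla\overline{\mu},\ptil)$, which cancels $(\ptil\cdot\nabla\overline{\mu},\eta^{\hvec})$; and the convection pair $(\overline{\uvec}\cdot\nabla\eta^{\hvec},\qtil) + (\overline{\uvec}\cdot\nabla\qtil,\eta^{\hvec}) = 0$ because $\overline{\uvec}$ is divergence-free and vanishes on $\partial\Omega$. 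The nonlocal Cahn--Hilliard diffusion is handled via the identity
\begin{align*}
\int_\Omega (\nabla K \ast \eta^{\hvec})\cdot\nabla\qtil\,dx \,=\, -\int_\Omega \eta^{\hvec}\,(\nabla K\dot{\ast}\nabla\qtil)\,dx,
\end{align*}
which follows from Fubini combined with the oddness of $\nabla K$ (a consequence of $K(-z)=K(z)$ in \textbf{(H4)}) and from the Neumann conditions $\partial_\nvec(a\eta^{\hvec}-K\ast\eta^{\hvec}+F''(\overline{\varphi})\eta^{\hvec}) = 0$ from \eqref{linbcs} and $\partial_\nvec\qtil = 0$, which cause the boundary contributions in the two successive integrations by parts to vanish.

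After all these cancellations, only the external source terms survive: $\int_0^T(\hvec,\ptil)\,dt$ from the linearized $\xivec^{\hvec}$-equation, and $-\beta_1\int_0^T(\overline{\uvec}-\uvec_Q,\xivec^{\hvec})\,dt$, $-\beta_2\int_0^T(\overline{\varphi}-\varphi_Q,\eta^{\hvec})\,dt$ from the adjoint right-hand sides. Combined with the time-boundary contribution of the previous paragraph, this produces the identity
\begin{align*}
\int_0^T\!\!\int_\Omega \hvec\cdot\ptil\,dx\,dt \,=\, &\,\beta_1\!\int_0^T\!\!\int_\Omega(\overline{\uvec}-\uvec_Q)\cdot\xivec^{\hvec}\,dx\,dt + \beta_2\!\int_0^T\!\!\int_\Omega(\overline{\varphi}-\varphi_Q)\,\eta^{\hvec}\,dx\,dt \\
&+ \beta_3\!\int_\Omega(\overline{\uvec}(T)-\uvec_\Omega)\cdot\xivec^{\hvec}(T)\,dx + \beta_4\!\int_\Omega(\overline{\varphi}(T)-\varphi_\Omega)\,\eta^{\hvec}(T)\,dx,
\end{align*}
and substituting the right-hand side into \eqref{nec.opt.cond} with $\hvec = \vvec - \overline{\vvec}$ immediately yields \eqref{nec.opt.cond2}. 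The main obstacle is not conceptual but the careful bookkeeping of these many pairwise cancellations, the most delicate one being the nonlocal identity relating $\nabla K\ast\eta^{\hvec}$ to $\nabla K\dot{\ast}\nabla\qtil$, which is precisely the reason why the $\dot{\ast}$-convolution was built into the adjoint equation \eqref{adJ2}.
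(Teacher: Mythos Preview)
Your proposal is correct and follows essentially the same duality argument as the paper: test the linearized system against $[\ptil,\qtil]$ and the adjoint system against $[\xivec^{\hvec},\eta^{\hvec}]$, integrate in time, use the initial and terminal conditions together with the symmetry of $K$, and verify the pairwise cancellations so that only $\int_0^T(\hvec,\ptil)\,dt$ and the cost-tracking terms survive. The paper is terser (it relegates the cancellations to a ``straightforward standard calculation'' and records only the self-adjointness identity $\int_\Omega(K\ast\eta)\,\omega\,dx=\int_\Omega(K\ast\omega)\,\eta\,dx$), whereas you spell out more of the individual pairings; one minor imprecision is that the displayed identity for $\nabla K\ast\eta^{\hvec}$ versus $\nabla K\dot{\ast}\nabla\qtil$ follows purely from Fubini and the oddness of $\nabla K$, with the Neumann conditions entering earlier when passing to the weak forms rather than in this identity itself.
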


\begin{proof}
Note that thanks to \eqref{adJics} we have for the sum (that we denote by $\mathcal{I}$)
 of the first four terms on the left-hand side of \eqref{nec.opt.cond}
\begin{align}
&\mathcal{I}:=\beta_1\int_0^T\!\!\int_\Omega(\overline{\uvec}-\uvec_Q)
\cdot\xivec^{\hvec}\,dx\,dt+\beta_2\int_0^T\!\!\int_\Omega(\overline{\varphi}-\varphi_Q)\eta^{\hvec}
\,dx\,dt
+\beta_3\int_\Omega(\overline{\uvec}(T)-\uvec_\Omega)\cdot\xivec^{\hvec}(T)\,dx\nonumber\\
&+\beta_4\int_\Omega(\overline{\varphi}(T)-\varphi_\Omega)\eta^{\hvec}(T)\,dx\,=\,
\beta_1\!\!\int_0^T\int_\Omega(\overline{\uvec}-\uvec_Q)\cdot\xivec^{\hvec}\,dx\,dt\,+\beta_2\int_0^T\!\!\int_\Omega(\overline{\varphi}-\varphi_Q)\eta^{\hvec}\,dx\,dt\nonumber\\
&+\int_0^T\big(\langle\ptil_t(t),\xivec^{\hvec}(t)\rangle_{{{V_{div}}}}\,+
\langle\xivec^{\hvec}_t(t),\ptil(t)\rangle_{{V_{div}}}\big)\,dt
+\int_0^T\big(\langle\qtil_t(t),\eta^{\hvec}(t)\rangle_{{{V}}}
+\langle\eta^{\hvec}_t(t),\qtil(t)\rangle_{{{V}}}\big)
\,dt\,.\label{proofadJ1}
\end{align}

Now,
recalling the weak formulation of the linearized system \eqref{linsy1}--\eqref{linics} for $\hvec=\vvec-
\overline{\vvec}$, we obtain,
omitting the argument $t$,
 \begin{align}
\langle\xivec^{\hvec}_t,\ptil\rangle_{{{V_{div}}}}\,&=
\,-2\,\big(\nu(\overline{\varphi})\,D\xivec^{\hvec},
D\ptil\big)\,-\,2\,\big(\nu'(\overline{\varphi})\,\eta^{\hvec}\,D\overline{\uvec},D\ptil)\,-\,b(\overline{\uvec},
\xivec^{\hvec},\ptil)\nonumber
\\[1mm]
&
\,\,\quad\, -\,b(\xivec^{\hvec},\overline{\uvec},\ptil)\,+\,\big((a\,\eta^{\hvec}-
K\ast\eta^{\hvec} + F''(\overline{\varphi})\,\eta^{\hvec})\,\nabla
\overline{\varphi},\ptil\big)
\nonumber\\[1mm]
&\,\,\,\quad +\,
\big( \overline{\mu}\,\nabla \eta^{\hvec},\ptil\big)\,+\,(\vvec-\overline{\vvec},\ptil)\,,
\label{proofadJ2}
\\[3mm]
\langle\eta^{\hvec}_t,\qtil\rangle_{{{V}}}\,&=\,
-\,\big(\nabla \big( a\,\eta^{\hvec}-K\ast\eta^{\hvec}+ F''(\overline{\varphi})\,\eta^{\hvec}\big),\nabla \qtil \big)
\,+\,(\overline{\uvec}\,\eta^{\hvec},\nabla\qtil)
\nonumber\\[1mm]
&
\,\,\,\quad +\,
(\xivec^{\hvec}\,\overline{\varphi}, \nabla\qtil)\,.
\label{proofadJ3}
\end{align}
Now, we insert these two equalities as well as (\ref{wfadj1})  and (\ref{wfadj2})
in (\ref{proofadJ1}). Integration by parts, using the boundary conditions for the involved quantities and the fact
that $\xivec^{\hvec}$ and $\ptil$ are divergence free vector fields, and observing that the
symmetry of the kernel $K$ implies the  identity
\begin{align*}
&\int_\Omega ({{K}}\ast\eta)\,\omega\,dx\,=\,\int_\Omega ({{K}}\ast\omega)\,\eta\,dx \,\quad\forall\,\eta,\omega\in H,
\end{align*}
we arrive after a straightforward standard calculation (which can be omitted here) at the conclusion that
$\mathcal{I}$ can be rewritten as
\begin{align*}
&\mathcal{I}:=\int_0^T\!\!\int_\Omega\ptil\cdot(\vvec-\overline{\vvec})\,dx\,dt\,.
\end{align*}
Therefore, (\ref{nec.opt.cond2}) follows from this identity and \eqref{nec.opt.cond}.
\end{proof}

\begin{oss}{\upshape
The system \eqref{sy1}--\eqref{ics}, written for $[\overline{\uvec},\overline{\varphi}]$, the adjoint system \eqref{adJ1}-\eqref{adJics}
and the variational inequality \eqref{nec.opt.cond2} form together the first-order necessary
optimality conditions. Moreover, since $\mathcal{V}_{ad}$ is a nonempty, closed and convex subset of $L^2(Q)^2$,
then \eqref{nec.opt.cond2} is in the case $\gamma>0$  equivalent to the following condition for the optimal control $\overline{\vvec}\in\mathcal{V}_{ad}$,
\begin{align}\nonumber
&\overline{\vvec}=\mathbb{P}_{\mathcal{V}_{ad}}\Big(-\frac{\ptil}{\gamma}\Big),
\end{align}
where $\mathbb{P}_{\mathcal{V}_{ad}}$ is the orthogonal projector in $L^2(Q)^2$ onto $\mathcal{V}_{ad}$.
From standard arguments it follows from this projection property the pointwise
condition
\begin{align}\nonumber
&
\overline{v}_i(x,t)\,=\,\max\,\left\{v_{a,i}(x,t),\,\min\,\left
\{-\gamma^{-1}\,\widetilde{p}_i(x,t), \,v_{b,i}(x,t)
\right\}\right\}, i=1,2, \quad\mbox{for a.\,e. }\,(x,t)\in Q\,,
\end{align}
where  $\widetilde{p}_i=\widetilde{\pvec}_i$, $i=1,2,3$.
}
\end{oss}

\end{document}